\theoremstyle{plain}
\newtheorem{theorem*}{Theorem}
\newtheorem*{lemma*}{Lemma}
\newtheorem{corollary*}{Corollary}
\newtheorem*{proposition*}{Proposition}
\newtheorem{conjecture*}{Conjecture}
\newtheorem{theorem}{Theorem}[section]
\newtheorem{lemma}[theorem]{Lemma}
\newtheorem{corollary}[theorem]{Corollary}
\newtheorem{proposition}[theorem]{Proposition}
\theoremstyle{remark}
\newtheorem*{remark}{Remark}
\newtheorem*{definition}{Definition}
\theoremstyle{definition}
   \def\Z{\Bbb{Z}}  \def\C{\Bbb{C}}
  \def\g{\gamma}  \def\bp{\begin{pmatrix}}
\def\sm{\setminus} \def\ep{\end{pmatrix}} \def\bn{\begin{enumerate}} 
   \def\en{\end{enumerate}}
\def\ba{\begin{array}} \def\ea{\end{array}}  
 \def\S{\Sigma}
\def\ker{\mbox{Ker}}\def\be{\begin{equation}} \def\ee{\end{equation}}
 \def\aut{\mbox{Aut}}
     \def\fr12{\frac{1}{2}} \def\z12{\Z[\fr12]}
\def\T{\mathcal{T}}
\def\tkt{\T_K(t)}
\begin{document}

\title{A survey of twisted Alexander polynomials}
\author{Stefan Friedl}
\address{University of Warwick, Coventry, UK}
\email{s.k.friedl@warwick.ac.uk}
\author{Stefano Vidussi}
\address{Department of Mathematics, University of California,
Riverside, CA 92521, USA} \email{svidussi@math.ucr.edu}

\date{\today}
\begin{abstract}
We give  a short introduction to  the theory of twisted Alexander polynomials  of a $3$--manifold associated to a representation of its fundamental group. We summarize their formal properties and we explain their relationship to twisted Reidemeister torsion. We then give a survey of the many applications of twisted invariants to the study of topological problems. We conclude with a short summary of the theory of higher order Alexander polynomials.
\end{abstract}
\maketitle

\section{Introduction}

In 1928 Alexander introduced a polynomial invariant for knots and links which quickly got referred  to as the Alexander polynomial.
His definition was later recast in terms of Reidemeister torsion by Milnor \cite{Mi62}
and it was extended by Turaev \cite{Tu75,Tu86} to an invariant of 3-manifolds.
More precisely, to a 3-manifold with empty or toroidal boundary $N$ we can associate its Alexander polynomial $\Delta_N$
which lies in the group ring $\Z[H]$, where $H$ is the maximal abelian quotient of $H_1(N;\Z)$.
The Alexander polynomial of knots, links and 3-manifolds in general is closely related to the topology properties of the underlying space.
For example it is known to contain information on the knot genus \cite{Se35}, knot concordance (\cite{FM66}), fiberedness and symmetries.

The Alexander polynomial carries only metabelian information on the fundamental group.
This limitation explain why in all the above  cases the Alexander polynomial carries partial, but not complete information.
The idea behind twisted invariants is to associate a polynomial invariant to a 3-manifold \emph{together}
with a choice of a representation of its fundamental group.
This approach makes it possible to
extract more powerful topological information.

The twisted Alexander polynomial for a knot $K \subset S^3$
was first  introduced by Xiao--Song Lin in 1990 (cf. \cite{Lin01}).
Whereas Lin's original definition used `regular Seifert surfaces' of knots,
later extensions to links and 3--manifolds either
generalized the Reidemeister--Milnor--Turaev torsion (cf. \cite{Wa94,Ki96,KL99a,FK06}) or
generalized the homological definition of the Alexander polynomial (cf. \cite{JW93,KL99a,Ch03,FK06,HKL08}).

In most cases the setup for twisted invariants is as follows:
Let $N$ be a 3--manifold with empty or toroidal boundary,  $\psi:\pi_1(N)\to F$ an epimorphism onto a free abelian group $F$
and $\gamma:\pi_1(N)\to \mbox{GL}(k,R)$  a representation with $R$ a domain.
In that case one can define the  twisted Reidemeister torsion $\tau(N,\gamma\otimes \psi)$, an invariant which in general lives in the quotient field of
the group ring $R[F]$.
If $R$ is furthermore a Noetherian unique factorization domain (e.g. $R=\mathbb{Z}$ or $R$ a field),
then  the twisted Alexander polynomials $\Delta_{N,i}^{\gamma\otimes \psi}\in R[F]$ is defined to be
the order of the twisted Alexander module $H_i(N;R[F]^k)$.

These two invariants are closely related, for example in the case that $\mbox{rank}(F)\geq 2$ we will see  that
\[ \tau(N,\gamma\otimes \psi)=\Delta_{N,1}^{\gamma\otimes \psi}\in R[F]. \]
 In fact in many papers the twisted Reidemeister torsion $\tau(N,\gamma\otimes \psi)$
is referred to as the twisted Alexander polynomial (cf. e.g. \cite{Wa94}).

The most important raison d'\^etre of these invariants lies in the fact that they contain deep information on the underlying topology
while at the same time being, as we will see, very computable invariants.

We now give a short outline of the paper.
In Section \ref{section:definition} we define twisted Reidemeister torsion and twisted Alexander polynomials of 3-manifolds,
and we show how to calculate these invariants.
In Section \ref{section:basics} we discuss basic properties of twisted invariants,
in particular we discuss the  relationship between twisted Reidemeister torsion and twisted Alexander polynomials
and we discuss the effect of Poincar\'e duality on twisted invariants.
Section \ref{section:dist} contains applications to distinguishing knots and links using twisted invariants.
In Section \ref{section:conc} we outline the results of Kirk and Livingston
regarding the behavior of twisted invariants under knot concordance and we extend the results
to the study of doubly slice knots and ribbon knots.
In Section \ref{section:fibgenus} we show that twisted invariants
give lower bounds on the knot genus and the Thurston norm, and we show that they give obstructions to
the fiberedness of 3-manifolds.
Sections \ref{section:twodim}, \ref{section:misc} and \ref{section:general}
contains a discussion of the many generalizations and further applications of twisted invariants.
In Section \ref{section:ho} we give an overview of the closely related theory of higher-order Alexander polynomials,
this theory was initiated by Cochran and Harvey.
Finally in Section \ref{section:question} we provide a list of open questions and problems.

\noindent \textbf{Conventions and Notation.}
Unless we say otherwise we adopt the following conventions:\\
(1) rings are commutative domains with unit element,\\
(2) 3--manifolds are  compact, connected and orientable,\\
(3) homology is taken with integral coefficients,\\
(4) groups are finitely generated.\\
We also use the following notation:
Given a ring $R$ we denote by $Q(R)$ its quotient field
and given a link $L\subset S^3$ we denote by $\nu L$ a (open) tubular neighborhood of $L$ in $S^3$.

\noindent \textbf{Remark.} For space reasons we unfortunately have to exclude from our exposition several important aspects of the subject. Among the most relevant omissions, we mention Turaev's torsion function, and the relation between torsion invariants on the one hand and Seiberg--Witten theory and Heegaard--Floer homology on the other.
Turaev's torsion function is defined using Reidemeister torsion corresponding to one--dimensional abelian representations.
This theory and its connection to Seiberg--Witten invariants, first unveiled by Meng and Taubes in \cite{MT96} (cf. also \cite{Do99}), is treated beautifully in Turaev's original papers
\cite{Tu97,Tu98} and in Turaev's books \cite{Tu01,Tu02a}. We also refer to \cite{OS04} for the relation of Turaev's torsion function to Heegaard--Floer homology.

\noindent \textbf{Remark.}
Almost all the results of this survey paper appeared already in previous paper.
We hope that we correctly  stated  the results of the many authors who worked on twisted Alexander polynomials. For the definite statements we nonetheless refer to the original papers.
The only new results are some theorems in Section \ref{section:slice} on knot and link concordance and
Theorem \ref{thm:fibob} on Reidemeister torsion of fibered manifolds.



%



\noindent \textbf{Acknowledgments.}
The first author would like to thank Markus Banagl and Denis Vogel for organizing the workshop
`The Mathematics of Knots: Theory and Application' in Heidelberg in December 2008.
The authors also would like to thank Takahiro Kitayama, Taehee Kim, Chuck Livingston, Takayuki Morifuji, Andrew Ranicki and
Dan Silver for many helpful comments and remarks. Finally we wish to thank the referee for many helpful comments and remarks.

\section{Definition and basic properties} \label{section:definition}

\subsection{Twisted Reidemeister torsion}\label{section:twitorsion}\label{section:twitau}

Let $N$ be a 3--manifold with empty or toroidal boundary, $F$ a torsion--free abelian group
and $\alpha:\pi_1(N)\to \mbox{GL}(k,R[F])$  a representation. Recall that we denote by $Q(R[F])$ the quotient field of $R[F]$.

We endow $N$ with a finite CW--structure.
We denote the universal cover of $N$ by $\tilde{N}$.
Recall that there exists a canonical left $\pi_1(N)$--action on the universal cover $\tilde{N}$ given by deck transformations. We
consider the cellular chain complex $C_*(\tilde{N})$ as a right $\mathbb{Z}[\pi_1(N)]$-module by defining $\sigma \cdot
g\mathrel{\mathop:}= g^{-1}\sigma$ for a chain $\sigma$. The representation $\alpha$ induces a representation $\alpha:\pi_1(N)\to \mbox{GL}(k,R[F])\to \mbox{GL}(k,Q(R[F]))$
which gives rise to  a left action of $\pi_1(N)$ on $Q(R[F])^k$. We can therefore consider
the  $Q(R[F])$--complex
\[ C_*(\tilde{N})\otimes_{\mathbb{Z}[\pi_1(N)]}Q(R[F])^k.\]
We now endow the free $\mathbb{Z}[\pi_1(N)]$--modules $C_*(\tilde{N})$
with a basis by picking lifts of the cells of $N$ to $\tilde{N}$.
Together with the canonical basis for $Q(R[F])^k$ we can now view  the $Q(R[F])$--complex $C_*(\tilde{N})\otimes_{\mathbb{Z}[\pi_1(N)]}Q(R[F])^k$
as a complex of based $Q(R[F])$--modules.

If this complex is not acyclic, then we define $\tau(N,\alpha)=0$.
Otherwise we denote by $\tau(N,\alpha)\in Q(R[F])\setminus \{0\}$ the  Reidemeister torsion of this based $Q(R[F])$--complex.
We will not recall the definition of Reidemeister torsion, referring instead to the many excellent expositions,
 e.g. \cite{Mi66}, \cite{Tu01} and \cite{Nic03}. However, in the next section we will
present a method for computing explicitly the twisted Reidemeister torsion of a 3--manifold.

It follows from Chapman's theorem \cite{Chp74} and from standard arguments (cf. the above literature) that
up to multiplication by an element in
\[     \{ \pm\det(\alpha(g)) \, |\, g\in \pi_1(N)\}\]
the Reidemeister torsion $\tau(N,\alpha)$ is well--defined, i.e. up to that indeterminacy $\tau(N,\alpha)$ is independent of the choice of underlying CW--structure
and the choice of the lifts of the cells. In the following, given $w\in Q(R[F])$ we write
\[ \tau(N,\alpha) \doteq w \]
if there exists a representative of $\tau(N,\alpha)$ which equals $w$.

Note that if $\gamma:\pi_1(N)\to \mbox{GL}(k,R)$ is a representation and $\psi:\pi_1(N)\to F$ a  homomorphism to a free abelian group, then we get a tensor representation
\[ \begin{array}{rcl} \gamma\otimes \psi: \pi_1(N) &\to & \mbox{GL}(k,R[F]) \\
g&\mapsto & \gamma(g)\cdot \psi(g)\end{array} \]
and the corresponding Reidemeister torsion $\tau(N,\gamma \otimes \psi)$. Except for parts of Section \ref{section:slice}
we will always consider the twisted Reidemeister torsion corresponding to such a tensor representation.
In that case, specializing the previous formula, $\tau(N,\gamma \otimes \psi)$  is well--defined up to multiplication by an element  in
\[     \{\pm \det(\gamma(g)) f\, |\, g\in \pi_1(N), f\in F\}.\]
In particular, if $\gamma:\pi_1(N)\to \mbox{SL}(k,R)$ is a  representation to a special linear groups,
then $\tau(N,\gamma \otimes \psi)\in Q(R[F])$ is well--defined up to multiplication
by an element in $\pm F$.
Furthermore, if $k$ is even, then $\tau(N,\gamma\otimes \psi)\in Q(R[F])$ is in fact well--defined up to multiplication
by an element in $F$ (cf. e.g. \cite{GKM05}).

\noindent Finally we adopt the following notation:
\begin{enumerate}
\item Given a homomorphism $\gamma:\pi\to G$ to a finite group $G$
we get an induced representation $\pi\to \mbox{Aut}(\mathbb{Z}[G])\cong \mbox{GL}(|G|,\mathbb{Z})$
given by left multiplication. In our notation we will not distinguish between a homomorphism to a finite group and the corresponding representation over $\mathbb{Z}$.
\item If $N$ is the exterior of a link $L\subset S^3$, $\psi:\pi_1(S^3\setminus \nu L)\to F$ the abelianization
and $\gamma:\pi\to \mbox{GL}(k,R)$ a representation, then we  write
$\tau(L,\gamma)$ for $\tau(S^3\setminus \nu L,\gamma\otimes \psi)$.
\end{enumerate}

\subsection{Computation of twisted Reidemeister torsion}\label{section:comptau}

Let $N$ be a 3--manifold with empty or toroidal boundary,  $\psi:\pi_1(N)\to F$ a non--trivial homomorphism to a free abelian group $F$
and $\gamma:\pi_1(N)\to \mbox{GL}(k,R)$  a representation.
In this section we will give an algorithm for computing $\tau(N,\gamma \otimes \psi)$ which is based on ideas of Turaev
(cf. in particular \cite[Theorem~2.2]{Tu01}).

We will first consider the case  that $N$ is closed.  We write $\pi=\pi_1(N)$.
We endow $N$ with  a CW--structure with one 0--cell, $n$ 1--cells, $n$ 2--cells and one $3$--cell.
It is well--known that such a CW--structure exists (cf. e.g. \cite[Theorem 5.1]{McM02} or \cite[Proof~of~Theorem~6.1]{FK06}).
Using this CW--structure we have the cellular chain complex
\[
0 \to C_3(\tilde{N}) \xrightarrow{\partial_3} C_2(\tilde{N})
\xrightarrow{\partial_2} C_1(\tilde{N}) \xrightarrow{\partial_1}
C_0(\tilde{N}) \to 0
\]
where $C_i(\tilde{N})\cong \mathbb{Z}[\pi]$ for $i=0,3$ and
$C_i(\tilde{N})\cong \mathbb{Z}[\pi]^n$ for $i=1,2$. Let $A_i,
i=1,2,3$  be the matrices over $\mathbb{Z}[\pi]$ corresponding to
the boundary maps $\partial_i:C_i\to C_{i-1}$  with respect to the
bases given by the lifts of the cells of $N$ to $\tilde{N}$. We can
arrange the lifts such that
 \[ \begin{array}{rcl} A_3 &=&
(1-g_1, 1-g_2, \ldots, 1-g_n)^t,\\
A_1 &=& (1-h_1, 1-h_2, \ldots, 1-h_n) \end{array} \]
with $g_1,\dots,g_n,h_1,\dots,h_n \in \pi$.
Note that $\{g_1,\dots,g_n\}$ and $\{h_1,\dots,h_n\}$
are generating sets for $\pi$ since $N$ is a closed 3--manifold. Since $\psi$ is non--trivial
there exist $r,s$ such that $\psi(g_r)\ne 0$ and $\psi(h_s)\ne 0$. Let $B_3$ be the $r$--th row of
$A_3$. Let $B_2$ be the result of deleting the $r$-th column and the $s$--th row from $A_2$. Let
$B_1$ be the $s$--th column of $A_1$.

Given a $p\times q$ matrix $B = (b_{rs})$  with entries in $\mathbb{Z}[\pi]$ we write $b_{rs}=\sum
b_{rs}^gg$ for $b_{rs}^g\in \mathbb{Z}, g\in \pi$. We then define $(\gamma \otimes \psi)(B)$ to be the $p\times
q$ matrix with entries
\[ \sum b_{rs}^g (\gamma \otimes \psi)(g)=\sum b_{rs}^g
\gamma(g)\cdot {\psi(g)}\in R[F]. \]
Since each such entry  is a $k\times k$ matrix with entries in $R[F]$ we can think of  $(\gamma \otimes \psi)(B)$ as a $pk\times qk$ matrix with entries in $R[F]$.

Now note that
\[ \det((\gamma \otimes \psi)(B_3))=\det(\mbox{id}-\gamma(g_r)\cdot {\psi(g_r)})\ne 0 \]
 since $\psi(g_r)\ne 0$. Similarly $\det((\gamma \otimes \psi)(B_1))\ne 0$.
 The following theorem is an immediate application of \cite[Theorem~2.2]{Tu01}.

\begin{theorem}\label{thm:Tu22}
We have
\[ \tau(N,\gamma \otimes \psi) \doteq \prod\limits_{i=1}^3 \det((\gamma \otimes \psi)(B_i))^{(-1)^{i}}.
\]
In particular, $H_*(N;Q(R[F])^k)=0$ if and only if
$\det((\gamma \otimes \psi)(B_2))\ne 0$.

\end{theorem}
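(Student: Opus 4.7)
The plan is to apply Turaev's formula for the Reidemeister torsion of an acyclic based chain complex over a field (Theorem~2.2 of \cite{Tu01}) to the complex
\[
C_* = C_*(\ti{N}) \otimes_{\mathbb{Z}[\pi]} Q(R[F])^k
\]
after making an explicit choice of ``$\tau$-chain'' adapted to the given CW-decomposition. Concretely, I would first apply the representation $\gamma\otimes\psi$ to each of the boundary matrices $A_1, A_2, A_3$, turning $C_*$ into a complex of based finite-dimensional $Q(R[F])$-vector spaces of dimensions $k, nk, nk, k$ in degrees $0, 1, 2, 3$.

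Next I would record that the $1\times 1$ block matrices $B_3$ and $B_1$ become the honest $k\times k$ matrices $I - \gamma(g_r)\psi(g_r)$ and $I - \gamma(h_s)\psi(h_s)$ after applying $\gamma\otimes\psi$; both are invertible, because $\psi(g_r)$ and $\psi(h_s)$ are nontrivial elements of the torsion-free abelian group $F$ and hence the leading $F$-graded parts of the two determinants do not cancel in $R[F]$. This is exactly what is needed to build the $\tau$-chain: in degree $3$ take the unique basis block, in degree $2$ discard the $r$-th block, in degree $1$ retain only the $s$-th block, and in degree $0$ take the unique block. The invertibility of $(\gamma\otimes\psi)(B_3)$ ensures that $\partial_3$ identifies $C_3$ with the span of the discarded $r$-th block in $C_2$, and symmetrically $(\gamma\otimes\psi)(B_1)$ ensures that the retained $s$-th block of $C_1$ maps isomorphically onto $C_0$ under $\partial_1$; so the choice is indeed a valid $\tau$-chain.

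With this $\tau$-chain in place, the three transition matrices prescribed by Turaev's formula are precisely $(\gamma\otimes\psi)(B_3)$, $(\gamma\otimes\psi)(B_2)$ and $(\gamma\otimes\psi)(B_1)$, in alternating degrees. The formula then reads
\[
\tau(N,\gamma\otimes\psi) \doteq \det((\gamma\otimes\psi)(B_1))^{-1}\cdot \det((\gamma\otimes\psi)(B_2))\cdot \det((\gamma\otimes\psi)(B_3))^{-1},
\]
which is the claimed product with exponent $(-1)^{i}$. For the ``in particular'' assertion I would argue that acyclicity of $C_*$ is equivalent to all three determinants being nonzero; since the outer two are invertible unconditionally, the condition collapses to $\det((\gamma\otimes\psi)(B_2)) \ne 0$. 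Alternatively one gets this from a rank count using that $\partial_3$ is injective and $\partial_1$ surjective, so that acyclicity is equivalent to $\rank((\gamma\otimes\psi)(A_2)) = (n-1)k$, which is in turn equivalent to the non-vanishing of the $(n-1)k \times (n-1)k$ minor $(\gamma\otimes\psi)(B_2)$.

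The main obstacle is pure bookkeeping: matching our choice of $\tau$-chain --- which row of $A_3$, which column of $A_1$ and the corresponding row/column deletion in $A_2$ --- with Turaev's combinatorial conventions, and verifying that the resulting signs and exponents really give $(-1)^i$ as stated. Once this identification is made, the result is an immediate specialization of the cited formula.
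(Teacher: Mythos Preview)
Your proposal is correct and follows exactly the approach the paper indicates: the paper simply states that the theorem is an immediate application of \cite[Theorem~2.2]{Tu01}, and your argument is precisely the specialization of that formula to the given CW-structure, with the $\tau$-chain chosen so that the transition matrices are the $(\gamma\otimes\psi)(B_i)$. The only content beyond the citation is the observation that $(\gamma\otimes\psi)(B_1)$ and $(\gamma\otimes\psi)(B_3)$ are automatically invertible, which both you and the paper handle via the nontriviality of $\psi(g_r)$ and $\psi(h_s)$.
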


We now consider the case  that $N$ has non--empty toroidal boundary.
Let $X$ be a CW--complex with the following two properties:
\begin{enumerate}
\item $X$ is simple homotopy equivalent to a CW--complex of $N$,
\item $X$ has one 0--cell, $n$ 1--cells and  $n-1$ 2--cells.
\end{enumerate}
It is well--known that such a CW--structure exists.
For example, if $N$ is the complement of a non--split link $L\subset S^3$, then we can take $X$
to be the 2--complex corresponding to a Wirtinger presentation of $\pi_1(S^3\setminus \nu L)$.

We now consider
\[
0 \to  C_2(\tilde{X})
\xrightarrow{\partial_2} C_1(\tilde{X}) \xrightarrow{\partial_1}
C_0(\tilde{X}) \to 0
\]
where $C_0(\tilde{X})\cong \mathbb{Z}[\pi]$,
$C_i(\tilde{X})\cong \mathbb{Z}[\pi]^n$ and $C_i(\tilde{X})\cong \mathbb{Z}[\pi]^{n-1}$.
Let $A_i, i=1,2$ over $\mathbb{Z}[\pi]$ be the matrices corresponding to
the boundary maps $\partial_i:C_i\to C_{i-1}$. As above we can arrange that
 \[
A_1 = (1-h_1, 1-h_2, \ldots, 1-h_n)  \] where $\{h_1,\dots,h_n\}$
is a  generating set for $\pi$. Since $\psi$ is non--trivial
there exists an $s$ such that $\psi(h_s)\ne 0$.  Let $B_2$ be the result of deleting the $s$--th row from $A_2$. Let
$B_1$ be the $s$--th column of $A_1$.
As above we have $\det((\gamma \otimes \psi)(B_1))\ne 0$.
 The following theorem is again an immediate application of \cite[Theorem~2.2]{Tu01}.

\begin{theorem}\label{thm:Tu22boundary}
We have \[ \tau(N,\gamma \otimes \psi) \doteq \prod\limits_{i=1}^2 \det((\gamma \otimes \psi)(B_i))^{(-1)^{i}}.
\]
In particular, we have $H_*(N;Q(R[F])^k)=0$ if and only if
$\det((\gamma \otimes \psi)(B_2))\ne 0$.

\end{theorem}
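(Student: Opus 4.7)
My plan is to mirror, almost verbatim, the strategy behind Theorem \ref{thm:Tu22}: reduce to the 2--complex $X$, apply Turaev's formula \cite[Theorem 2.2]{Tu01} to the resulting 2--term based chain complex, and verify the one nondegeneracy hypothesis that this formula requires.

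First I would replace $N$ by $X$. By hypothesis, $X$ is simple homotopy equivalent to a CW--structure on $N$, so Chapman's theorem and the invariance of Reidemeister torsion under simple homotopy equivalence (exactly as cited in Section \ref{section:twitorsion}) give
\[ \tau(N,\gamma\otimes\psi) \doteq \tau(X,\gamma\otimes\psi). \]
Hence it is enough to compute the torsion of the 2--term based $Q(R[F])$--complex
\[ 0 \to C_2(\tilde{X})\otimes_{\mathbb{Z}[\pi]}Q(R[F])^k \xrightarrow{(\gamma\otimes\psi)(A_2)} C_1(\tilde{X})\otimes_{\mathbb{Z}[\pi]}Q(R[F])^k \xrightarrow{(\gamma\otimes\psi)(A_1)} C_0(\tilde{X})\otimes_{\mathbb{Z}[\pi]}Q(R[F])^k \to 0, \]
where the bases are those induced by the lifts of the cells of $X$ together with the canonical basis of $Q(R[F])^k$.

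Next I would apply \cite[Theorem 2.2]{Tu01}. That result asserts that for a based chain complex of finite dimensional vector spaces, if one can select, in each boundary matrix, a square submatrix whose determinants are nonzero and whose row/column supports are compatible (complementary) between consecutive degrees, then the torsion of the complex is acyclic and equals the alternating product of these determinants. In the present setting the deletion pattern dictated by the single index $s$ (the $s$-th column of $A_1$ and the $s$-th row of $A_2$) is exactly such a compatible selection, precisely because $C_0(\tilde X)$ has rank one and $C_1(\tilde X),C_2(\tilde X)$ differ in rank by one. Hence, modulo the one nondegeneracy check below, the quoted formula is immediate.

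The one input still to verify is that $\det((\gamma\otimes\psi)(B_1))\ne 0$. Since $B_1$ is the $s$-th column of $A_1$, this determinant equals $\det(\mathrm{id}-\gamma(h_s)\psi(h_s))$; I would argue that this is nonzero by noting that $\psi(h_s)$ is a nontrivial element of the free abelian group $F$, hence of infinite order in $R[F]$, so the matrix $\gamma(h_s)\psi(h_s)$ cannot have $1$ as an eigenvalue over $Q(R[F])$ (its characteristic polynomial is $p(x)=\det(x\cdot\mathrm{id}-\gamma(h_s)\psi(h_s))$, and $p(1)=\det(\mathrm{id}-\gamma(h_s)\psi(h_s))$ cannot vanish identically in $Q(R[F])$ since the constant term, read in the grading given by $F$, is a nonzero element of $R$). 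Granted this, Turaev's theorem yields both the torsion formula and the equivalence: the complex is acyclic if and only if the complementary square matrix $(\gamma\otimes\psi)(B_2)$ is invertible over $Q(R[F])$, i.e.\ $\det((\gamma\otimes\psi)(B_2))\ne 0$. The main obstacle, as in the closed case, is simply bookkeeping: checking that the basis and sign conventions match those of \cite[Theorem 2.2]{Tu01} well enough that the formula holds up to the stated indeterminacy $\doteq$; this is already absorbed in the closed case of Theorem \ref{thm:Tu22}, so the bounded statement really is an immediate adaptation.
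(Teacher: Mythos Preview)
Your proposal is correct and follows exactly the paper's approach: the paper simply states that the theorem is ``again an immediate application of \cite[Theorem~2.2]{Tu01}'', and you have spelled out precisely that application, including the passage from $N$ to $X$ via simple homotopy equivalence and the nondegeneracy check $\det((\gamma\otimes\psi)(B_1))\ne 0$ (which the paper records just before the statement as ``As above we have $\det((\gamma \otimes \psi)(B_1))\ne 0$'').
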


\subsection{Torsion invariants}\label{section:torsioninvariants}
Let $S$ be a Noetherian unique factorization domain
(henceforth UFD). Examples of Noetherian UFD's are given by $\mathbb{Z}$ and by fields,
furthermore if $R$ is a Noetherian UFD and $F$ a free abelian group, then $R[F]$ is again a Noetherian UFD.

For a finitely generated
$S$-module $A$ we can find a presentation
$$
S^r \xrightarrow{P} S^s \to A \to 0
$$
since $S$ is Noetherian. Let $i\ge 0$ and suppose $s-i\le r$. We
define $E_i(A)$, \emph{the $i$-th elementary ideal} of $A$, to be
the ideal in $S$ generated by all $(s-i)\times (s-i)$ minors of
$P$ if $s-i>0$ and to be $S$ if $s-i\le 0$. If $s-i > r$, we
define $E_i(A)= 0$. It is known that $E_i(A)$ does not depend on
the choice of a presentation of $A$ (cf. \cite{CF77}).

Since $S$ is a UFD there exists a unique smallest principal ideal
of $S$ that contains $E_0(A)$. A generator of this principal ideal
is defined to be the \emph{order of $A$} and denoted by $\mbox{ord}
(A)\in S$. The order is well-defined up to multiplication by a
unit in $S$. Note that  $A$ is  $S$-torsion if and only if
$\mbox{ord} (A) \ne 0$. For more details, we refer to \cite{Tu01}.

\subsection{Twisted Alexander invariants}\label{section:twialex}
Let $N$ be a 3-manifold and let $\alpha:\pi_1(N)\to \mbox{GL}(k,R[F])$ be a representation with $R$ a Noetherian UFD.
Similarly to Section \ref{section:twitau} we
define the $R[F]$--chain complex $C_*(\tilde{N})\otimes_{\mathbb{Z}[\pi_1(N)]}R[F]^k$.
 For $i\ge 0$, we define \emph{the $i$-th twisted Alexander module of $(N,\alpha)$} to be the $R[F]$--module
$$
H_i(N;R[F]^k) \mathrel{\mathop:}= H_i(C_*(\tilde{N})\otimes_{\mathbb{Z}[\pi_1(N)]}R[F]^k).
$$
where $\pi_1(N)$ acts on $R[F]^k$ by $\alpha$.
 Since $N$ is compact and $R[F]$
is Noetherian these modules are finitely presented over $R[F]$.

\begin{definition} \label{def:polynomial}
The \emph{$i$-th twisted Alexander polynomial of $(N,\alpha)$}
is defined to be $\mbox{ord} (H_i(N;R[F]^k))\in R[F]$ and denoted by
$\Delta^{\alpha}_{N,i}$.
\end{definition}

Recall that by the discussion of Section \ref{section:torsioninvariants} twisted Alexander polynomials are well-defined up to
multiplication by a unit in $R[F]$.
Note that the units of $R[F]$ are of the form $rf$ with $r$ a unit in $R$ and $f\in F$.
 In the following, given $p\in R[F]$ we write
\[ \Delta^{\alpha}_{N,i} \doteq p \]
if there exists a representative of $\Delta^{\alpha}_{N,i}$ which equals $p$.

\noindent
We often write
 $\Delta^\alpha_{N}$ instead of  $\Delta^{\alpha}_{N,1}$,
and we refer to it as the \emph{twisted Alexander polynomial of
$(N,\alpha)$}. We recall that given a representation $\gamma:\pi_1(N)\to \mbox{GL}(k,R)$ and a non--trivial homomorphism $\psi:\pi_1(N)\to F$ to a free abelian group $F$ we get a tensor
representation $\gamma\otimes \psi$ and in particular  twisted Alexander polynomials
 $\Delta^{\gamma\otimes \psi}_{N,i}$. In almost all cases we will consider twisted Alexander polynomials corresponding to such a tensor representation.

 \noindent
When we consider twisted Alexander polynomials of links we  adopt the following notational conventions:
\begin{enumerate}
\item We identify $R[\mathbb{Z}]$ with $R[t^{\pm
1}]$ and $R[\mathbb{Z}^m]$ with $R[t_1^{\pm 1},\dots,t_m^{\pm 1}]$.
\item Given a link $L\subset S^3$ together with the abelianization  $\psi:\pi_1(S^3\setminus \nu L)\to F$
and a representation $\gamma:\pi_1(S^3\setminus \nu L)\to \mbox{GL}(k,R)$  with $R$ a Noetherian UFD, we  write
$\Delta^{\gamma}_{L,i}$ instead of $\Delta^{\gamma\otimes \psi}_{S^3\setminus \nu L,i}$.
\item If $L$ is an ordered oriented link, then we have a canonical isomorphism $F\cong \mathbb{Z}^m$
and we can identify $R[F]$ with $R[t_1^{\pm 1},\dots,t_m^{\pm 1}]$.
\item We sometimes record the fact that the twisted Alexander polynomial of a link $L$ is a (multivariable) Laurent polynomial in the notation, i.e. given
an oriented knot $K\subset S^3$ we sometimes write $\Delta^\gamma_{K,i}(t)=\Delta^\gamma_{K,i}\in R[t^{\pm
1}]$
and given an ordered oriented $m$--component link $L\subset S^3$
we sometimes write $\Delta^\gamma_{L,i}(t_1,\dots,t_m)=\Delta^\gamma_{L,i}\in R[t_1^{\pm 1},\dots,t_m^{\pm 1}]$.
\item Finally given a link $L$ we also drop the representation from the notation when the representation
 is the trivial representation to $\mbox{GL}(1,\mathbb{Z})$.
\end{enumerate}

With all these conventions, given a knot $K\subset S^3$, the polynomial $\Delta_K(t)=\Delta_K \in \mathbb{Z}[t^{\pm 1}]$
is just the ordinary Alexander polynomial.

\subsection{Computation of twisted Alexander polynomials}\label{section:compdelta}

Let $N$ be a 3--manifold with empty or toroidal boundary, and  ${\alpha}:\pi_1(N)\to \mbox{GL}(k,R[F])$  a  representation with $R$ a Noetherian UFD
and $F$ a free abelian group.
Given a finite presentation for $\pi_1(N)$ the polynomials $\Delta_{N,1}^{\alpha}\in R[F]$ and $\Delta_{N,0}^{\alpha}\in R[F]$ can be computed
efficiently using Fox calculus (cf. e.g. \cite[p.~98]{CF77} and \cite{KL99a}). We point out that
because we view $C_*(\tilde{N})$ as a \emph{right} module over $\mathbb{Z}[\pi_1(N)]$ we need a slightly
different definition of Fox derivatives than the one commonly used. We refer to \cite[Section~6]{Ha05} for details.
Finally, Proposition \ref{prop:dualitydelta}  allows us to compute $\Delta_{N,2}^{\alpha}\in R[F]$ using the algorithm for
computing the zeroth twisted Alexander polynomial.
In particular  all the  twisted Alexander polynomials $\Delta^{{\alpha}}_{N,i}$ can be computed from a finite presentation
of the fundamental group.

\section{Basic properties of twisted invariants}\label{section:basics}

In this section we summarize various basic algebraic properties of twisted Reidemeister torsion and twisted Alexander polynomials.

%
%
%
%

\subsection{Relationship between twisted invariants}

The following proposition is \cite[Theorem~4.7]{Tu01}.

\begin{proposition}\label{prop:deltataualpha}
Let $N$ be a 3--manifold with empty or toroidal boundary and let
$\alpha:\pi_1(N)\to \mbox{GL}(k,R[F])$  a  representation where $R$ is a Noetherian UFD and $F$ a free abelian group. If $\Delta_{N,i}^{\alpha} \ne 0$
for $i=0,1,2$, then
\[ \tau(N,{\alpha} )\doteq  \prod_{i=0}^2 \big(\Delta^{{\alpha}}_{N,i}\big)^{(-1)^{i+1}}.\]
\end{proposition}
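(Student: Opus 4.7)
The plan is to combine the general algebraic identity relating Reidemeister torsion of a based acyclic complex to the orders of its homology modules with the geometric setup of Section~\ref{section:twitau}.

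First, I would recall that $\tau(N,\alpha)$ is by definition the Reidemeister torsion of the based $Q(R[F])$--chain complex $C_* \otimes_{R[F]} Q(R[F])$, where $C_* := C_*(\tilde{N}) \otimes_{\mathbb{Z}[\pi_1(N)]} R[F]^k$ and the basis is obtained from lifts of cells of $N$ together with the canonical basis of $R[F]^k$. Meanwhile $\Delta_{N,i}^\alpha = \ord H_i(C_*) \in R[F]$. Both sides of the proposed equation are well--defined up to units of $R[F]$, which makes the statement $\doteq$ well--posed.

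Second, the crucial algebraic input is the following lemma, which holds for any bounded based chain complex $D_*$ of finitely generated free modules over a Noetherian UFD $S$: if $\ord H_i(D_*) \ne 0$ for every $i$, then $D_* \otimes_S Q(S)$ is acyclic and
\[ \tau\bigl(D_* \otimes_S Q(S)\bigr) \doteq \prod_i \bigl(\ord H_i(D_*)\bigr)^{(-1)^{i+1}} \in Q(S), \]
up to a unit of $S$. I would prove this by splitting $D_*$ via the short exact sequences $0 \to Z_i \to D_i \to B_{i-1} \to 0$ and $0 \to B_i \to Z_i \to H_i \to 0$, using multiplicativity of Reidemeister torsion under short exact sequences of based acyclic complexes, and identifying the contribution of the acyclic complex resolving $H_i \otimes_S Q(S)$ with $\ord H_i$ up to units; the UFD hypothesis is exactly what ensures that $\ord$ is well--defined up to units of $S$. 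This is precisely Theorem~4.7 of \cite{Tu01}.

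Third, I would verify that the torsion hypothesis $\ord H_i(C_*) \ne 0$ holds for all $i$, not just $i = 0,1,2$. If $\partial N \ne \emptyset$, then $N$ is homotopy equivalent to a $2$--complex, so $C_i = 0$ and $H_i(C_*) = 0$ for $i \ge 3$, and these factors contribute $1$ to the product. If $N$ is closed, an Euler characteristic count shows that $H_0, H_1, H_2$ being $R[F]$--torsion forces $H_3(C_*)$ to be $R[F]$--torsion as well; moreover, from the matrix description of $\partial_3$ given in Theorem~\ref{thm:Tu22} one sees that $H_3(C_*) = 0$ whenever some $\mathrm{id} - \alpha(g_r)$ is invertible over $Q(R[F])$, so again the $i = 3$ factor is $\doteq 1$.

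The main obstacle is the $H_3$ bookkeeping in the closed case and ensuring the indeterminacies of Reidemeister torsion and of $\ord$ match on the two sides; once those are settled, the desired formula $\tau(N,\alpha) \doteq \prod_{i=0}^2 (\Delta_{N,i}^\alpha)^{(-1)^{i+1}}$ follows by substituting $D_* = C_*$ and $S = R[F]$ into the algebraic lemma and discarding the now--trivial $i = 3$ factor.
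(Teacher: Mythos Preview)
The paper does not give its own proof of this proposition: it simply states that it is \cite[Theorem~4.7]{Tu01}. Your outline is correct and is essentially how Turaev's theorem is proved, so there is nothing to compare against on the paper's side.

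One small point on your step three in the closed case. Your Euler characteristic argument shows that $H_3(C_*)$ is $R[F]$--torsion, and since $H_3(C_*) = \ker(\partial_3) \subset C_3$ is a submodule of a free module over the domain $R[F]$ it is torsion--free; hence $H_3(C_*) = 0$ and its order is $1$. That already suffices. Your second argument via the matrix description of $\partial_3$ from Theorem~\ref{thm:Tu22} is redundant and, as stated, incomplete for a general representation $\alpha$: the invertibility of some $\mathrm{id} - \alpha(g_r)$ over $Q(R[F])$ is established in Section~\ref{section:comptau} only for tensor representations $\gamma \otimes \psi$ with $\psi$ nontrivial, not for arbitrary $\alpha:\pi_1(N)\to \mbox{GL}(k,R[F])$. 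You can simply drop that sentence.
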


The following is a mild extension of \cite[Proposition~2.5]{FK06}, \cite[Lemmas~6.2~and~6.3]{FK08a} and \cite[Theorem~6.7]{FK08a}.
Most of the ideas go back to work of Turaev (cf. e.g. \cite{Tu86} and \cite{Tu01}). The third statement is proved in \cite{DFJ10}.

\begin{proposition}\label{prop:deltatau}\label{prop:taudelta}
Let $N$ be a 3--manifold with empty or toroidal boundary,  $\psi:\pi_1(N)\to F$ a non--trivial homomorphism to a free abelian group $F$
and $\gamma:\pi_1(N)\to \mbox{GL}(k,R)$  a  representation where $R$ is a Noetherian UFD. Then the following hold:
\begin{enumerate}
\item $\Delta_{N,0}^{\gamma \otimes \psi} \ne 0$ and $\Delta_{N,i}^{\gamma \otimes \psi}=1$ for $i\geq 3$.
\item If $\mbox{rank}(\mbox{Im}\{\pi_1(N)\to F\})>1$, then $\Delta^{{\gamma \otimes \psi}}_{N,0} \doteq 1$.
\item If $\g$ is irreducible and if $\g$ restricted to $\ker(\psi)$ is non-trivial, then $\Delta^{{\gamma \otimes \psi}}_{N,0} \doteq 1$.
\item If $\Delta_{N,1}^{\gamma \otimes \psi} \ne 0$, then $\Delta_{N,2}^{\gamma \otimes \psi} \ne 0$.
\item If $N$ has non--empty boundary and if $\Delta_{N,1}^{\gamma \otimes \psi} \ne 0$, then $\Delta^{{\gamma \otimes \psi}}_{N,2} \doteq 1$.
\item If  $\mbox{rank}(\mbox{Im}\{\pi_1(N)\to F\})>1$ and if $\Delta_{N,1}^{\gamma \otimes \psi} \ne 0$, then $\Delta^{{\gamma \otimes \psi}}_{N,2}\doteq 1$.
\item We have $\Delta_{N,1}^{\gamma \otimes \psi}= 0$ if and only if
$ \tau(N,{\gamma \otimes \psi} )=0$.
\item If $\Delta_{N,1}^{\gamma \otimes \psi} \ne 0$, then
\[ \tau(N,{\gamma \otimes \psi} )\doteq  \prod_{i=0}^2 \big(\Delta^{{\gamma \otimes \psi}}_{N,i}\big)^{(-1)^{i+1}}.\]
\end{enumerate}
\end{proposition}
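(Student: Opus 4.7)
The eight statements split naturally into a chain--level block (1)--(3), a Poincar\'e duality block (4)--(6), and the torsion comparison block (7)--(8). I would establish them in this order; the final formula (8) is immediate once we know that $\Delta_{N,i}^{\gamma\otimes\psi}\ne 0$ for $i=0,1,2$, at which point Proposition \ref{prop:deltataualpha} applies directly.

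\textbf{Chain--level statements.} Work with the CW--structures of Section \ref{section:comptau}. Since $\psi$ is non--trivial one can pick $g\in\pi_1(N)$ with $\psi(g)\ne 0$; as $\psi(g)$ has infinite order in $F$, the element $\det(I-\gamma(g)\psi(g))\in R[F]$ is nonzero. This element appears as a block of the matrix for $\partial_1$ in either of Theorems \ref{thm:Tu22} and \ref{thm:Tu22boundary}, so $\Delta_{N,0}^{\gamma\otimes\psi}$ divides it and is hence nonzero; in the closed case the analogous element is the only block of $\partial_3$, so $\partial_3$ is injective and $H_3=0$, giving (1). For (2) and (3) one finds two group elements $g_1,g_2$ for which $\det(I-\gamma(g_i)\psi(g_i))$ are \emph{coprime} in the UFD $R[F]$: in (2) use two elements whose $\psi$--images are $\mathbb{Q}$--linearly independent in $F$; in (3) use one such element together with an element of $\ker\psi$ whose image under $\gamma$ is not the identity, invoking irreducibility of $\gamma$ to force non--triviality of the resulting determinantal relation. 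Since $\Delta_{N,0}^{\gamma\otimes\psi}$ divides both coprime elements, it must be a unit, i.e.\ $\doteq 1$.

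\textbf{Poincar\'e duality block.} Statements (4)--(6) all follow from the duality Proposition \ref{prop:dualitydelta}, which (as indicated in Section \ref{section:compdelta}) identifies $\Delta_{N,2}^{\gamma\otimes\psi}$ with the zeroth twisted Alexander polynomial of a suitable dual representation. For (4), this identification together with the Euler--characteristic identity
\[ \sum_i(-1)^i\mathrm{rank}_{R[F]}H_i(N;R[F]^k)=k\chi(N)=0, \]
combined with (1), forces $\mathrm{rank}\,H_1=\mathrm{rank}\,H_2$, so that the two vanish together. For (5) and (6) one transfers the boundary version of (1) and of (2) to the dual side and concludes that $\Delta_{N,2}^{\gamma\otimes\psi}\doteq 1$.

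\textbf{Torsion comparison and the main obstacle.} By definition $\tau(N,\gamma\otimes\psi)=0$ iff the complex $C_*(\tilde N)\otimes_{\mathbb{Z}[\pi_1(N)]}Q(R[F])^k$ is non--acyclic, iff some $H_i(N;R[F]^k)$ has positive $R[F]$--rank, iff some $\Delta_{N,i}^{\gamma\otimes\psi}=0$. By (1) only $i=1,2$ can contribute, and by the Euler--characteristic argument above these vanish together, yielding (7). For (8), the hypothesis $\Delta_{N,1}\ne 0$ together with (1) and (4) gives $\Delta_{N,i}\ne 0$ for $i=0,1,2$, and Proposition \ref{prop:deltataualpha} yields the product formula. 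The main obstacle is the duality step: one has to pin down the precise identification of $\Delta_{N,2}$ with a zeroth--order polynomial for the dual setup, including the involution on $R[F]$ inverting elements of $F$, and verify that this is compatible with the $\doteq$--indeterminacy. Once this is in hand, everything reduces to Section \ref{section:comptau} and the chain--level arguments above.
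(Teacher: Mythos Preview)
The paper does not prove this proposition; it cites \cite{FK06}, \cite{FK08a}, and \cite{DFJ10}. Your overall architecture---the divisibility $\Delta_{N,0}\mid\det(I-\gamma(g)\psi(g))$ for (1)--(2), the Euler--characteristic count for (4) and (7), and the appeal to Proposition~\ref{prop:deltataualpha} for (8)---matches those references and is correct.

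There are two genuine gaps. For (3), your coprimality scheme breaks down: if $g\in\ker\psi$ then $\det(I-\gamma(g))\in R$ may well vanish (it suffices that $\gamma(g)$ have $1$ as an eigenvalue), so you cannot manufacture a second nonzero element coprime to $\det(I-\gamma(g_1)\psi(g_1))$ this way, and ``invoking irreducibility'' does not rescue a single determinant. The correct argument shows $H_0(N;R[F]^k)=0$ directly. Since $\ker\psi\trianglelefteq\pi_1(N)$, the coinvariants $(R^k)_{\ker\psi}$ carry a quotient representation of $\pi_1(N)$; irreducibility of $\gamma$ forces this quotient to be either $0$ or all of $R^k$, and nontriviality of $\gamma|_{\ker\psi}$ rules out the latter. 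Then $H_0(N;R[F]^k)$, being a quotient of $R[F]\otimes_R(R^k)_{\ker\psi}=0$, vanishes.

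For (5), Proposition~\ref{prop:dualitydelta} is stated only for \emph{closed} $N$, and Poincar\'e--Lefschetz duality for a manifold with boundary identifies $H_2(N;\,\cdot\,)$ with an $H^1(N,\partial N;\,\cdot\,)$, not with an $H_0$; so ``transferring (1) to the dual side'' does not yield $\Delta_{N,2}\doteq 1$. The standard argument is simpler: since $\partial N$ is a nonempty union of tori, $N$ is homotopy equivalent to a $2$--complex, so $H_2(N;R[F]^k)$ sits inside the free module $C_2$ and is therefore torsion--free over the domain $R[F]$. Your Euler--characteristic identity, together with (1) and the hypothesis $\Delta_{N,1}\ne 0$, gives $\operatorname{rank}H_2=0$; a torsion--free module of rank $0$ over a domain is zero, whence $\Delta_{N,2}\doteq 1$. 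Statement (6) then follows: for $N$ with boundary use (5), and for closed $N$ first invoke (4) to get $\Delta_{N,2}\ne 0$ and then apply Proposition~\ref{prop:dualitydelta} together with (2) for the conjugate representation $\overline{\gamma}$.
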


A few remarks regarding the equalities of Proposition \ref{prop:deltataualpha} and Proposition \ref{prop:deltatau} (8)  are in order:
\begin{enumerate}
\item Note that twisted Reidemeister torsion has in general a smaller indeterminacy than twisted Alexander polynomials.
In particular the equality
holds up to the indeterminacy of the twisted Alexander polynomials.
\item As pointed out in Section \ref{section:compdelta}, the twisted Alexander polynomials $\Delta^{\gamma\otimes \psi}_{N,i}$ can be computed from a presentation
of the fundamental group, whereas the computation of $\tau(N,\gamma \otimes \psi)$
requires in general an understanding of the CW--structure of $N$ (cf. Section \ref{section:comptau}). In particular
the equality of  Proposition \ref{prop:deltatau} (8) is often a faster method for computing $\tau(N,\gamma \otimes \psi)$ (at the price of a higher indeterminacy).
\item The twisted Alexander polynomial is only defined for representations over a Noetherian UFD, whereas the twisted Reidemeister torsion is defined for a finite dimensional representation over any commutative ring.
\item It is an immediate consequence of Proposition \ref{prop:deltatau} that $\tau(N,{\gamma \otimes \psi} )$ lies in $R[F]$, i.e. is a polynomial,
if $\mbox{rank}(\mbox{Im}\{\pi_1(N)\to F\})>1$.
\end{enumerate}

\begin{remark}
\begin{enumerate}
\item
Given a link $L\subset S^3$ and a representation $\gamma:\pi_1(S^3\setminus L)\to \mbox{GL}(k,R)$
Wada \cite{Wa94} introduced an invariant, which in this paragraph we refer to as $W(L,\gamma)$.
Wada's invariant is in many papers referred to as the twisted Alexander polynomial of a link.
Kitano \cite{Ki96} showed that $W(L,\gamma)$
agrees with the Reidemeister torsion $\tau(L,\gamma)$ (with the same indeterminacy). This can also be shown using the arguments of  Section \ref{section:comptau}.
In particular, in light of Proposition \ref{prop:deltatau} we see that $W({L,\gamma})\doteq \Delta_{L}^{\gamma}$ if $L$ has more than one component.
\item Lin's original definition \cite{Lin01} of the twisted Alexander polynomial of a knot uses `regular Seifert surfaces' and is rather different in character to the algebra--topological approach taken in the subsequent papers. The relation between the definitions of twisted Alexander polynomials given by Lin \cite{Lin01}, Jiang and Wang \cite{JW93} and Section \ref{section:twialex}
    is explained in \cite[Proposition~3.3]{JW93} and \cite[Section~4]{KL99a}.
\end{enumerate}
\end{remark}

\subsection{Twisted invariants for conjugate representations}
Given a group $\pi$ we say that two representations $\gamma_1,\gamma_2:\pi \to \mbox{GL}(k,R)$ are \emph{conjugate} if there exists
$P\in \mbox{GL}(k,R)$ such that $\gamma_1(g)=P\gamma_2(g)P^{-1}$ for all $g\in \pi$.
We recall the following elementary lemma.

\begin{lemma}\label{lem:equ:rep}
Let $N$ be a 3--manifold with empty or toroidal boundary and let $\psi:\pi_1(N)\to F$ a non--trivial homomorphism to a free abelian group.
If $\gamma_1$ and $\gamma_2$ are conjugate representations of $\pi_1(N)$,
then
\[ \tau(N,\gamma_1\otimes \psi)\doteq \tau(N,\gamma_2\otimes \psi),\]
if  $R$ is furthermore a Noetherian UFD, then for any $i$ we have
\[ \Delta_{N,i}^{\gamma_1\otimes \psi}\doteq \Delta_{N,i}^{\gamma_2\otimes \psi}. \]
\end{lemma}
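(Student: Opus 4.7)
The plan is to exploit the fact that conjugation by a fixed matrix $P\in \mathrm{GL}(k,R)$ commutes with the tensor factor coming from $\psi$. Since $P$ has entries in $R\subset R[F]$ and is independent of $g$, the identity $\gamma_1(g)=P\gamma_2(g)P^{-1}$ implies
\[ (\gamma_1\otimes \psi)(g) \;=\; P\,(\gamma_2\otimes \psi)(g)\,P^{-1} \]
for every $g\in \pi_1(N)$, where we view $P\in \mathrm{GL}(k,R[F])$ via the inclusion $R\hookrightarrow R[F]$. Thus left multiplication by $P$ on $R[F]^k$ (respectively $Q(R[F])^k$) intertwines the two $\pi_1(N)$-actions, and so induces an isomorphism of chain complexes
\[ \Phi_*\colon C_*(\tilde N)\otimes_{\mathbb Z[\pi_1(N)]} V_2 \;\xrightarrow{\;\cong\;}\; C_*(\tilde N)\otimes_{\mathbb Z[\pi_1(N)]} V_1 \]
where $V_j$ denotes $R[F]^k$ with the $\pi_1(N)$-action from $\gamma_j\otimes\psi$ (and similarly over $Q(R[F])$).

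For the statement on twisted Alexander polynomials, the map $\Phi_*$ yields an $R[F]$-linear isomorphism on each $H_i$, and since the order ideal $E_0(\,\cdot\,)$ (and hence the order) of a finitely presented $R[F]$-module depends only on its isomorphism class, we immediately obtain $\Delta_{N,i}^{\gamma_1\otimes\psi}\doteq \Delta_{N,i}^{\gamma_2\otimes\psi}$.

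For the torsion statement, fix the preferred basis of $C_*(\tilde N)$ given by lifts of the cells, tensored with the standard basis of $R[F]^k$. Then $\Phi_i$ is represented, in each dimension $i$, by a block-diagonal matrix with $P$ repeated $c_i$ times, where $c_i$ is the number of $i$-cells. By the standard change-of-basis formula for Reidemeister torsion, the two torsions differ by
\[ \prod_i \det(P)^{(-1)^i c_i} \;=\; \det(P)^{\chi(N)}. \]
Since $N$ is a 3-manifold with empty or toroidal boundary we have $\chi(N)=0$, so this factor is trivial, proving $\tau(N,\gamma_1\otimes\psi)\doteq \tau(N,\gamma_2\otimes\psi)$.

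The only genuinely non-trivial point is the torsion statement, and within it the observation that the Euler characteristic hypothesis forces the change-of-basis contribution to vanish; without that one would a priori only obtain equality up to a power of $\det(P)$, which need not lie in the indeterminacy group $\{\pm\det(\gamma(g))f\}$ for the tensor representation. Everything else is a formal consequence of having an isomorphism of based chain complexes induced by the conjugating matrix $P$.
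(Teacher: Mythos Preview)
Your argument is correct. The paper does not actually supply a proof of this lemma; it merely introduces it as an ``elementary lemma'' and moves on, so there is nothing to compare against. Your observation that the conjugating matrix $P$ induces an isomorphism of the twisted chain complexes, together with the point that the resulting change-of-basis factor $\det(P)^{\chi(N)}$ vanishes because $\chi(N)=0$ for a $3$--manifold with empty or toroidal boundary, is exactly the sort of verification the authors had in mind.
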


Note that non--conjugate representations do not necessarily give different Alexander polynomials (cf. \cite[Theorem~B]{LX03}).
\subsection{Change of variables}
In this section we will show how to reduce the number of variables in twisted Alexander polynomials, in particular
this discussion will show how to obtain one--variable twisted Alexander polynomials from multi--variable twisted Alexander polynomials.

Throughout this section let $N$ be a 3--manifold with empty or toroidal boundary, let
$\psi:\pi_1(N)\to F$ be a non--trivial homomorphism to a free abelian group $F$
and let $\gamma:\pi_1(N)\to \mbox{GL}(k,R)$ be a  representation.
Furthermore let $\phi:F\to H$ also be a homomorphism to a  free abelian group such that $\phi\circ \psi$ is non--trivial.
We denote the induced ring homomorphism $R[F]\to R[H]$ by $\phi$ as well.
Let
\[ S=\{ f \in R[F] \, |\, \phi(R[F])\ne 0 \in R[H]\}.\]
Note that $\phi$ induces a homomorphism $R[F]S^{-1}\to Q(R[H])$ which we also denote by $\phi$.

The following is  a slight generalization of \cite[Theorem~6.6]{FK08a}, which in turn builds on ideas of Turaev
(cf. \cite{Tu86} and \cite{Tu01}).

\begin{proposition}\label{prop:changevartau}
We have  $\tau(N,\gamma \otimes \psi)\in R[F]S^{-1}$, and
\[ \tau(N,\gamma\otimes \psi \circ \phi) \doteq \phi(\tau(N,\gamma \otimes \psi)).\]
\end{proposition}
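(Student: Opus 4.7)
\medskip

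The plan is to derive both claims directly from the explicit formulas for twisted Reidemeister torsion supplied by Theorems \ref{thm:Tu22} and \ref{thm:Tu22boundary}. The key algebraic observation is that for every $g\in \pi_1(N)$ one has
\[ (\gamma\otimes(\psi\circ\phi))(g) \;=\; \gamma(g)\cdot(\phi\circ\psi)(g) \;=\; \phi\bigl((\gamma\otimes\psi)(g)\bigr),\]
where on the right $\phi$ is applied entrywise to the $k\times k$ matrix $(\gamma\otimes\psi)(g)$ over $R[F]$. Consequently, for any matrix $B$ over $\mathbb{Z}[\pi_1(N)]$, the matrix $(\gamma\otimes\psi\circ\phi)(B)$ arising from the algorithm of Section \ref{section:comptau} is the entrywise $\phi$-image of $(\gamma\otimes\psi)(B)$. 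Since $\phi:R[F]\to R[H]$ is a ring homomorphism, it commutes with determinants of square matrices over $R[F]$.

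Focusing on the closed case, I would fix a CW-structure as in Section \ref{section:comptau} and then choose the indices $r,s$ so that the stronger conditions $\phi\circ\psi(g_r)\neq 0$ and $\phi\circ\psi(h_s)\neq 0$ hold; such indices exist because $\{g_1,\dots,g_n\}$ and $\{h_1,\dots,h_n\}$ generate $\pi_1(N)$ and $\phi\circ\psi$ is non-trivial by assumption. This choice also satisfies the hypotheses $\psi(g_r)\neq 0$ and $\psi(h_s)\neq 0$ required to apply Theorem \ref{thm:Tu22}, so the same matrices $B_1,B_2,B_3$ simultaneously compute both torsions. The scalar denominators in the torsion formula are
\[ \det\bigl((\gamma\otimes\psi)(B_3)\bigr)=\det\bigl(\mathrm{id}-\gamma(g_r)\cdot\psi(g_r)\bigr)\quad\text{and}\quad \det\bigl((\gamma\otimes\psi)(B_1)\bigr)=\det\bigl(\mathrm{id}-\gamma(h_s)\cdot\psi(h_s)\bigr),\]
and applying $\phi$ to either one yields the analogous non-zero determinant for the representation $\gamma\otimes\psi\circ\phi$, by the very same argument already used in Section \ref{section:comptau}. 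Hence both denominators lie in $S$, which establishes $\tau(N,\gamma\otimes\psi)\in R[F]S^{-1}$.

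For the second claim I would apply $\phi$ to the formula of Theorem \ref{thm:Tu22}, namely
\[ \tau(N,\gamma\otimes\psi)\;\doteq\;\prod_{i=1}^{3}\det\bigl((\gamma\otimes\psi)(B_i)\bigr)^{(-1)^i},\]
and use $\phi(\det M)=\det(\phi(M))$ together with the identity from the first paragraph to get
\[ \phi\bigl(\tau(N,\gamma\otimes\psi)\bigr)\;\doteq\;\prod_{i=1}^{3}\det\bigl((\gamma\otimes\psi\circ\phi)(B_i)\bigr)^{(-1)^i}\;\doteq\;\tau(N,\gamma\otimes\psi\circ\phi),\]
where the last equality is Theorem \ref{thm:Tu22} again, now applied to $\gamma\otimes\psi\circ\phi$ with the same choice of $r,s$. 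The boundary case is handled identically, using Theorem \ref{thm:Tu22boundary} with a single index $s$ chosen so that $\phi\circ\psi(h_s)\neq 0$.

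The main obstacle is really just bookkeeping rather than a genuine mathematical difficulty: I must verify that the indeterminacy $\pm\det(\gamma(g))f$ for $g\in\pi_1(N)$, $f\in F$ behaves well under $\phi$ (which it does, since $\phi$ is a $\mathbb{Z}$-algebra map fixing $\det\gamma(g)\in R$), and I must handle the degenerate case $\tau(N,\gamma\otimes\psi)=0$ separately. In that case Theorem \ref{thm:Tu22} forces $\det((\gamma\otimes\psi)(B_2))=0$, and applying $\phi$ gives $\det((\gamma\otimes\psi\circ\phi)(B_2))=0$, so $\tau(N,\gamma\otimes\psi\circ\phi)=0$ as well, and both sides of the claimed equality are zero.
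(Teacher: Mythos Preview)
Your proof is correct. The paper does not supply its own argument for this proposition, instead citing \cite[Theorem~6.6]{FK08a} and Turaev \cite{Tu86,Tu01}; your approach via the explicit determinantal formulas of Theorems \ref{thm:Tu22} and \ref{thm:Tu22boundary} (which are themselves Turaev's) is exactly the method those references use, so you are essentially reconstructing the cited proof. The one point worth underlining, which you handle correctly, is the choice of indices $r,s$ satisfying the \emph{stronger} condition $\phi\circ\psi(g_r)\ne 0$, $\phi\circ\psi(h_s)\ne 0$: this is what forces the denominators to land in $S$ and makes the same $B_i$ compute both torsions.
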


The following is now an immediate corollary of the previous proposition and
Proposition \ref{prop:deltatau}.

\begin{corollary}\label{cor:changevardelta}
Assume that  $\gamma:\pi_1(N)\to \mbox{GL}(k,R)$ is a  representation with $R$ a Noetherian UFD.
If $\phi\left(\Delta_{N}^{\gamma\otimes \psi}\right)\ne 0$, then
we have the following equality:
\[ \prod_{i=0}^2 \left(\Delta^{\gamma \otimes \phi\circ \psi}_{N,i}\right)^{(-1)^{i+1}}\doteq
\phi \left(\prod_{i=0}^2 \big(\Delta^{\gamma\otimes \psi}_{N,i}\big)^{(-1)^{i+1}}\right).\]
In particular, if $\mbox{rank}\{\mbox{Im}\{\pi_1(N)\xrightarrow{\phi\circ \psi}H\}\}\geq 2$, then
\[  \Delta^{\gamma \otimes \phi\circ \phi}_{N} \doteq \phi\left(\Delta^{\gamma\otimes \psi}_{N}\right). \]
\end{corollary}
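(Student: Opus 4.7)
The plan is to deduce the corollary by chaining Proposition \ref{prop:changevartau} with the torsion--polynomial formula of Proposition \ref{prop:deltatau}(8), applied to both representations $\gamma\otimes\psi$ and $\gamma\otimes\phi\circ\psi$.

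First, I verify the hypotheses needed to apply Proposition \ref{prop:deltatau}(8) on the $\psi$-side. Since $\phi(\Delta_N^{\gamma\otimes\psi})\ne 0$, in particular $\Delta_{N,1}^{\gamma\otimes\psi}\ne 0$. Proposition \ref{prop:deltatau}(1) then gives $\Delta_{N,0}^{\gamma\otimes\psi}\ne 0$, and Proposition \ref{prop:deltatau}(4) gives $\Delta_{N,2}^{\gamma\otimes\psi}\ne 0$. Therefore Proposition \ref{prop:deltatau}(8) yields
\[ \tau(N,\gamma\otimes\psi)\doteq \prod_{i=0}^2\bigl(\Delta^{\gamma\otimes\psi}_{N,i}\bigr)^{(-1)^{i+1}} \qquad\text{in }Q(R[F]). \]
By Proposition \ref{prop:changevartau}, this element actually lies in $R[F]S^{-1}$, and
\[ \tau(N,\gamma\otimes\phi\circ\psi)\doteq \phi\bigl(\tau(N,\gamma\otimes\psi)\bigr)\doteq \phi\!\left(\prod_{i=0}^2\bigl(\Delta^{\gamma\otimes\psi}_{N,i}\bigr)^{(-1)^{i+1}}\right), \]
where the second equality is interpreted via the extension $R[F]S^{-1}\to Q(R[H])$.

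Next, I turn the argument around on the $\phi\circ\psi$-side. The right-hand side is nonzero: the $i=1$ factor $\phi(\Delta_{N,1}^{\gamma\otimes\psi})=\phi(\Delta_N^{\gamma\otimes\psi})$ is nonzero by hypothesis, and the fact that $\tau(N,\gamma\otimes\psi)\in R[F]S^{-1}$ (Proposition \ref{prop:changevartau}) means that the product, after clearing the allowed indeterminacy, has denominator in $S$, so $\phi$ applied to it is a well-defined nonzero element of $Q(R[H])$. Hence $\tau(N,\gamma\otimes\phi\circ\psi)\ne 0$, so by Proposition \ref{prop:deltatau}(7) we have $\Delta_{N,1}^{\gamma\otimes\phi\circ\psi}\ne 0$, and then parts (1) and (4) give $\Delta_{N,0}^{\gamma\otimes\phi\circ\psi}\ne 0$ and $\Delta_{N,2}^{\gamma\otimes\phi\circ\psi}\ne 0$. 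Applying Proposition \ref{prop:deltatau}(8) once more on this side,
\[ \tau(N,\gamma\otimes\phi\circ\psi)\doteq \prod_{i=0}^2\bigl(\Delta^{\gamma\otimes\phi\circ\psi}_{N,i}\bigr)^{(-1)^{i+1}}. \]
Equating the two expressions for $\tau(N,\gamma\otimes\phi\circ\psi)$ is the first claim of the corollary.

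For the ``in particular'' statement, note that if $\mbox{rank}(\mbox{Im}\{\pi_1(N)\xrightarrow{\phi\circ\psi}H\})\geq 2$, then a fortiori $\mbox{rank}(\mbox{Im}\{\pi_1(N)\xrightarrow{\psi}F\})\geq 2$. Proposition \ref{prop:deltatau}(2) then gives $\Delta_{N,0}^{\gamma\otimes\psi}\doteq 1$ and $\Delta_{N,0}^{\gamma\otimes\phi\circ\psi}\doteq 1$, while part (6) gives $\Delta_{N,2}^{\gamma\otimes\psi}\doteq 1$ and $\Delta_{N,2}^{\gamma\otimes\phi\circ\psi}\doteq 1$. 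Substituting into the first claim collapses it to $\Delta^{\gamma\otimes\phi\circ\psi}_N\doteq \phi(\Delta^{\gamma\otimes\psi}_N)$. The main obstacle in the whole argument is the bookkeeping of the indeterminacies: one must ensure that the $\doteq$ relation in $Q(R[F])$ transports cleanly through $\phi$, which is precisely what the localization statement $\tau\in R[F]S^{-1}$ in Proposition \ref{prop:changevartau} is designed to guarantee.
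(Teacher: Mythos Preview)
Your argument is correct and follows exactly the route the paper indicates: the corollary is stated there as ``an immediate corollary of the previous proposition and Proposition~\ref{prop:deltatau}'', and you have simply unpacked that one-line remark by applying Proposition~\ref{prop:deltatau}(8) on both the $\psi$- and $\phi\circ\psi$-sides and linking them via Proposition~\ref{prop:changevartau}. The only place where your write-up is slightly loose is the nonvanishing of $\phi(\tau(N,\gamma\otimes\psi))$: the cleanest justification is to write $\tau=p/s$ with $s\in S$, use $\tau\cdot\Delta_0\Delta_2\doteq\Delta_1$ to get $p\,\Delta_0\Delta_2=s\,u\,\Delta_1$ for a unit $u$, and then apply $\phi$ to both sides to force $\phi(p)\ne 0$ from $\phi(\Delta_1)\ne 0$.
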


\subsection{Duality for twisted invariants}\label{section:duality}

Let   $R$ be a ring with a (possibly trivial) involution $r\mapsto \overline{r}$.
Let $F$ be a free abelian group, with its natural involution. We extend the involution on $R$ to the group ring $R[F]$ and the quotient field $Q(R[F])$ in the usual way.
We equip $R[F]^k$ with the
standard hermitian inner product $\langle v ,w \rangle=v^t\overline{w}$ (where we view elements in $R[F]^k$
as column vectors).

Let $N$ be a 3--manifold with empty or toroidal boundary
and let $\alpha:\pi_1(N)\to \mbox{GL}(k,R[F])$ a representation.
We denote by  $\overline{\alpha} : \pi_1(N) \to \mbox{GL}(k,R[F])$  the representation given by
\[ \langle \alpha(g^{-1})v,w\rangle =\langle v,\overline{\alpha}(g)w\rangle \]
for all $v,w\in R[F]^k,g\in \pi_1(N)$. Put differently, for any $g\in \pi_1(N)$ we have
\[ \overline{\alpha}(g)= \overline{\left(\alpha(g)^{-1}\right)^t} \in \mbox{GL}(k,R[F]).\]
We say that a representation is \emph{unitary} if $\overline{\alpha}=\alpha$.

Note that if  $\psi:\pi_1(N)\to F$  is a non--trivial homomorphism to a free abelian group $F$
and  $\gamma:\pi_1(N)\to \mbox{GL}(k,R)$  a representation, then
\[ \overline{\gamma \otimes \psi} = \overline{\gamma}\otimes \psi.\]

The following duality theorem can be proved using the ideas of \cite{Ki96} and \cite[Corollary~5.2]{KL99a}.

\begin{proposition}   \label{prop:dualitytau}
Let $N$ be a 3--manifold with empty or toroidal boundary and let $\alpha:\pi_1(N)\to \mbox{GL}(k,R[F])$ a representation.
Then
\[ \overline{\tau(N,{\alpha})} \doteq  {\tau(N,\overline{a})}\in R[F].\]
In particular if  $\psi:\pi_1(N)\to F$  is a non--trivial homomorphism to a free abelian group $F$
and  $\gamma:\pi_1(N)\to \mbox{GL}(k,R)$  is a unitary  representation, then $\tau(N,\gamma\otimes \psi)$ is reciprocal, i.e.
\[ \overline{\tau(N,\gamma\otimes \psi)}=\tau(N,\gamma\otimes \psi)\in R[F].\]
\end{proposition}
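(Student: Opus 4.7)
The plan is to derive the duality from a chain-level Poincaré--Lefschetz duality, combined with the general behavior of Reidemeister torsion under dualization of a based complex, mirroring the strategy of \cite{Ki96,KL99a} but carried out intrinsically on the manifold. Fix a CW-structure on $N$, and consider the dual cell decomposition, lifted equivariantly to $\tilde N$. For a closed oriented 3-manifold, cap product with the fundamental class gives a $\mathbb{Z}[\pi_1(N)]$-chain isomorphism
\[ \mbox{Hom}_{\mathbb{Z}[\pi_1(N)]}(C_{3-*}(\tilde N),\mathbb{Z}[\pi_1(N)]) \;\longrightarrow\; C_*(\tilde N), \]
where the left-hand side inherits a left $\mathbb{Z}[\pi_1(N)]$-module structure via the standard involution $g \mapsto g^{-1}$. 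When $N$ has toroidal boundary, one uses the analogous Poincaré--Lefschetz pairing for $(N,\partial N)$; crucially, because every component of $\partial N$ is a torus and hence has vanishing torsion contribution (its twisted torsion with respect to $\gamma\otimes\psi$ is trivial up to the usual indeterminacy), the multiplicativity of torsion on the short exact sequence $0\to C_*(\partial\tilde N)\to C_*(\tilde N)\to C_*(\tilde N,\partial \tilde N)\to 0$ allows us to replace the absolute chain complex by the relative one without affecting the torsion.

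Next I would tensor the duality isomorphism above with $Q(R[F])^k$ via $\alpha$. Using the standard Hermitian pairing $\langle v,w\rangle = v^t \overline w$ on $R[F]^k$, one identifies $\mbox{Hom}(C_*(\tilde N)\otimes_\alpha Q(R[F])^k, Q(R[F]))$ with $C_*(\tilde N)\otimes_{\overline\alpha} Q(R[F])^k$, the key point being that the action on the dual picks up a transpose-and-inverse, which by the very definition of $\overline\alpha$ becomes the $\overline\alpha$-action. With the dual cellular bases (lifts versus dual lifts), this is a chain isomorphism of based $Q(R[F])$-complexes between the dual of $C_*(\tilde N)\otimes_\alpha Q(R[F])^k$ and $C_{3-*}(\tilde N)\otimes_{\overline\alpha} Q(R[F])^k$.

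Finally, I would invoke the standard identity that for a based acyclic complex $C_*$ of length $n$ over a ring with involution, the dualized complex $C^*$ (with dual basis) has torsion
\[ \tau(C^*)\doteq \overline{\tau(C_*)}^{(-1)^{n+1}}, \]
which for $n=3$ gives precisely $\tau(C^*)\doteq \overline{\tau(C_*)}$. Combined with the previous paragraph this yields $\tau(N,\overline\alpha)\doteq \overline{\tau(N,\alpha)}$; the reciprocity statement for $\gamma$ unitary is then immediate because $\overline{\gamma\otimes\psi}=\overline\gamma\otimes\psi=\gamma\otimes\psi$. The main obstacle is bookkeeping: one must track the indeterminacy and the matching of dual bases carefully enough that the equality holds modulo the indeterminacy asserted in Section \ref{section:twitau} (rather than a larger one), and one must verify in the bounded case that the boundary terms genuinely contribute trivially to the torsion; this is where the toroidal boundary hypothesis enters in an essential way.
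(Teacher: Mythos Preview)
Your proposal is correct and follows precisely the approach the paper indicates: the paper does not give its own proof but states that the result ``can be proved using the ideas of \cite{Ki96} and \cite[Corollary~5.2]{KL99a}'', and your argument---chain-level Poincar\'e--Lefschetz duality, identification of the dual complex with the $\overline{\alpha}$-twisted complex via the hermitian pairing, the standard torsion-of-the-dual formula, and the vanishing of the boundary torus contribution---is exactly the strategy of those references. One minor caveat: the cap product gives a chain homotopy equivalence rather than a strict isomorphism, but since this equivalence is simple it suffices for the torsion computation.
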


It is easy to see that in general the twisted Reidemeister torsion is not reciprocal if one considers representations
$\alpha$
such that  $\alpha(g)\ne \overline{\alpha(g)}$ for some $g\in \pi_1(N)$.
Hillman, Silver and Williams \cite{HSW09} give much more subtle examples which show that there also exist knots $K$
together with special linear representations such that the corresponding twisted Reidemeister torsion  is not reciprocal.

The following proposition follows from the discussion in \cite{FK06}.

\begin{proposition} \label{prop:dualitydelta}
Let $N$ be a closed 3--manifold  and let $\alpha:\pi_1(N)\to \mbox{GL}(k,R[F])$ a representation with $R$  a  Noetherian UFD.
Assume that  $\Delta_{N,i}^{\alpha} \ne 0$ for all $i$. Then the following equalities hold:
\[ \overline{\Delta_{N,2}^{\alpha}}\doteq \Delta_{N,0}^{\overline{a}} \mbox{ and }  \overline{\Delta_{N}^{\alpha}}\doteq \Delta_{N}^{\overline{a}}.\]
\end{proposition}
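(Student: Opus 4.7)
The plan is to combine twisted Poincar\'e duality for a closed oriented 3--manifold with the universal coefficient theorem, and then invoke a basic fact about orders of $\mbox{Ext}^1$ modules over a Noetherian UFD.

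First I would set up twisted Poincar\'e duality. Since $N$ is closed and oriented, the cellular chain complex $C_*(\tilde N)$ is chain homotopy equivalent (as a $\mathbb{Z}[\pi_1(N)]$--complex) to $\mbox{Hom}_{\mathbb{Z}[\pi_1(N)]}(C_{3-*}(\tilde N),\mathbb{Z}[\pi_1(N)])$, where the involution on the group ring is used to reconcile the left and right module structures. Extending this equivalence along $\alpha$ and using the definition $\ol{\alpha}(g)=\ol{(\alpha(g)^{-1})^t}$ from Section \ref{section:duality} yields isomorphisms of $R[F]$--modules
\[ H_i(N;R[F]^k_{\alpha})\;\cong\; H^{3-i}(N;R[F]^k_{\ol{\alpha}}),\qquad i=0,1,2. \]

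Next I would apply the universal coefficient theorem to the finite free $R[F]$--complex $\mbox{Hom}_{\mathbb{Z}[\pi_1(N)]}(C_*(\tilde N),R[F]^k_{\ol{\alpha}})$, which gives the short exact sequence
\[ 0 \to \mbox{Ext}^1_{R[F]}\!\left(H_{j-1}(N;R[F]^k_{\ol{\alpha}}),R[F]\right) \to H^{j}(N;R[F]^k_{\ol{\alpha}}) \to \mbox{Hom}_{R[F]}\!\left(H_{j}(N;R[F]^k_{\ol{\alpha}}),R[F]\right) \to 0. \]
The hypothesis $\Delta^{\alpha}_{N,i}\ne 0$ for all $i$ says that every $H_i(N;R[F]^k_{\alpha})$ is $R[F]$--torsion; by the duality of the first step, the same holds for the modules $H_j(N;R[F]^k_{\ol{\alpha}})$. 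Torsion modules over a domain have no nonzero homomorphisms to the domain, so the $\mbox{Hom}$ term vanishes and the sequence collapses to $H^{j}(N;R[F]^k_{\ol{\alpha}})\cong \mbox{Ext}^1_{R[F]}(H_{j-1}(N;R[F]^k_{\ol{\alpha}}),R[F])$.

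Finally, I would invoke the following elementary lemma, whose proof is by direct computation: if $A$ is a finitely presented torsion module over a Noetherian UFD $S$ with involution, and $P$ is a square presentation matrix for $A$, then $\mbox{Ext}^1_S(A,S)$ is presented by the transpose $P^t$, and converting this right $S$--module to a left $S$--module via the involution gives $\mbox{ord}(\mbox{Ext}^1_S(A,S))\doteq \ol{\mbox{ord}(A)}$. Chaining the three steps with $j=3-i$ produces
\[ \Delta^{\alpha}_{N,i} \;=\; \mbox{ord}\bigl(H_i(N;R[F]^k_{\alpha})\bigr) \;\doteq\; \ol{\,\mbox{ord}\bigl(H_{2-i}(N;R[F]^k_{\ol{\alpha}})\bigr)\,} \;=\; \ol{\Delta^{\ol{\alpha}}_{N,2-i}},\qquad i=0,1,2. \]
Setting $i=2$ and applying the involution gives $\ol{\Delta^{\alpha}_{N,2}}\doteq \Delta^{\ol{\alpha}}_{N,0}$; setting $i=1$ and doing the same gives $\ol{\Delta^{\alpha}_{N}}\doteq \Delta^{\ol{\alpha}}_{N}$.

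The main obstacle is bookkeeping the two independent occurrences of the involution: the first appears in Poincar\'e duality, where one trades the $\pi_1(N)$--action via $\alpha$ for the action via $\ol{\alpha}$; the second appears when one converts the right $R[F]$--module $\mbox{Ext}^1_{R[F]}(A,R[F])$ back to a left module. Neither step is individually deep, but miscounting either overline shifts the statement by a conjugation. The conventions chosen in Section \ref{section:duality} are tuned so that these two appearances combine to produce exactly one overline on the right-hand side, yielding the clean formulas in the statement.
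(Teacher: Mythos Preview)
Your overall strategy---Poincar\'e duality, then a universal coefficient argument, then an identification of the order of $\mbox{Ext}^1$---is exactly the standard route and is what the reference to \cite{FK06} points to. However, your second step has a genuine gap. The short exact sequence
\[ 0 \to \mbox{Ext}^1_{R[F]}(H_{j-1},R[F]) \to H^{j} \to \mbox{Hom}_{R[F]}(H_{j},R[F]) \to 0 \]
is the universal coefficient theorem for rings of global dimension at most one (hereditary rings, e.g.\ PIDs). But $R[F]$ is only assumed to be a Noetherian UFD: already $\Z[t^{\pm 1}]$ and $\Q[t_1^{\pm 1},t_2^{\pm 1}]$ are not PIDs, and the higher $\mbox{Ext}$ groups need not vanish. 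The same problem undermines your ``elementary lemma'': a finitely generated torsion module over a Noetherian UFD need not admit a \emph{square} presentation matrix, so the transpose argument does not go through as written.

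The repair is to localise. For a height--one prime $\mathfrak{p}\subset R[F]$ the localisation $R[F]_{\mathfrak{p}}$ is a DVR (a Noetherian UFD is integrally closed, hence normal), and over a DVR both of your arguments are valid: the UCT short exact sequence holds and torsion modules have square presentations. Since the order of a finitely generated torsion module over a Noetherian UFD is determined up to units by its localisations at height--one primes, this suffices. Equivalently, one runs the universal coefficient spectral sequence globally and observes that $\mbox{Ext}^p_{R[F]}(H_q,R[F])$ for $p\ge 2$ is pseudo--null (it vanishes after localising at any height--one prime), hence has unit order; multiplicativity of orders in exact sequences then gives $\mbox{ord}(H^j)\doteq \overline{\mbox{ord}(H_{j-1})}$ exactly as you need. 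This is the argument carried out in \cite{FK06}.
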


\subsection{Shapiro's lemma for twisted invariants}

Let $N$ be a 3--manifold with empty or toroidal boundary.
Let $p:\hat{N}\to N$ be a finite cover of degree $d$.
Let $F$ be a free abelian group and let $\hat{\alpha}:\pi_1(\hat{N})\to \mbox{GL}(k,R[F])$ a representation.

 Now consider  the $R[F]^k$--module
 \[ \mathbb{Z}[\pi_1({N})]\otimes_{\mathbb{Z}[\pi_1(\hat{N})]} R[F]^k.\]
 If $g_1,\dots,g_d$ are representatives of $\pi_1(N)/\pi_1(\hat{N})$ and if $e_1,\dots,e_k$ is the canonical basis of $R[F]^k$,
 then it is straightforward to see that the above $R[F]$--module is a free $R[F]$--module with basis $g_i\otimes e_j, i=1,\dots,d, j=1,\dots,k$.
 The group $\pi_1(N)$ acts on
\[ \mathbb{Z}[\pi_1({N})]\otimes_{\mathbb{Z}[\pi_1(\hat{N})]} R[F]^k =R[F]^{kd}\]
via left multiplication which defines a representation $\pi_1(N)\to \mbox{GL}(kd,R[F])$ which we denote by $\alpha$.

\begin{remark}
\begin{enumerate}
\item
Let  $\hat{\gamma}:\pi_1(\hat{N})\to \mbox{GL}(k,R)$  be a  representation.
Let  $\psi:\pi_1(N)\to F$ be a non--trivial homomorphism to a free abelian group $F$.
We write $\hat{\psi}=\psi\circ p_*$ and we  denote by $\gamma$ the
 representation given by left multiplication by $\pi_1(N)$ on
\[ \mathbb{Z}[\pi_1({N})]\otimes_{\mathbb{Z}[\pi_1(\hat{N})]} R^k =R^{kd}.\]
Given $\hat{\alpha}=\hat{\gamma}\otimes \hat{\psi}$ we have in that case $\alpha=\gamma \otimes \psi$.
\item If $\hat{\gamma}$ is the trivial one--dimensional representation, and $\hat{N}$ the cover of $N$ corresponding to an epimorphism
$\varphi:\pi_1(N)\to G$ to a finite group, then we have $\gamma=\varphi$.
\end{enumerate}
\end{remark}

The following is now a variation on Shapiro's lemma (cf. \cite[Lemma~3.3]{FV08a} and \cite[Section~3]{HKL08}).

\begin{theorem}\label{thm:shapiro}
We have
\[ \tau(\hat{N},\hat{\alpha}) \doteq \tau(N,\alpha).\]
If $R$ is furthermore a  Noetherian UFD, then
\[ \Delta_{\hat{N},i}^{\hat{\alpha}} \doteq  \Delta_{N,i}^{\alpha}.\]
\end{theorem}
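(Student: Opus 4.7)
The plan is to prove the theorem by establishing a chain--level isomorphism between the twisted complexes on $\hat{N}$ and on $N$, matching bases in a way compatible with the indeterminacy of Reidemeister torsion.

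First I would endow $N$ with a finite CW--structure and lift it to a CW--structure on $\hat{N}$: each cell $e$ of $N$ lifts to $d$ cells in $\hat{N}$, one for each coset representative in $\pi_1(N)/\pi_1(\hat{N})$. Since $p:\hat{N}\to N$ is a finite (hence regular or irregular, but in any case topological) cover, the universal cover of $\hat{N}$ coincides with that of $N$; write $\tilde{N}$ for this common space. Choose once and for all lifts $\{\tilde{e}\}$ of the cells of $N$ to $\tilde{N}$, and coset representatives $g_1,\dots,g_d\in\pi_1(N)$ of $\pi_1(N)/\pi_1(\hat{N})$. Then the cells $\{g_i\cdot \tilde{e}\}_{i}$ descend to lifts of the cells of $\hat{N}$ in $\tilde{N}=\tilde{\hat{N}}$, giving a basis of $C_*(\tilde{\hat{N}})$ as a right $\mathbb{Z}[\pi_1(\hat{N})]$--module.

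Next I would establish the key adjunction isomorphism
\[
C_*(\tilde{N})\otimes_{\mathbb{Z}[\pi_1(N)]}\bigl(\mathbb{Z}[\pi_1(N)]\otimes_{\mathbb{Z}[\pi_1(\hat{N})]}R[F]^k\bigr)
\;\cong\;
C_*(\tilde{\hat{N}})\otimes_{\mathbb{Z}[\pi_1(\hat{N})]}R[F]^k,
\]
where on the left $\pi_1(N)$ acts via $\alpha$ on the induced module and on the right $\pi_1(\hat{N})$ acts via $\hat{\alpha}$. This is essentially Shapiro's lemma at the chain level; under it, the basis $\tilde{e}\otimes(g_i\otimes e_j)$ on the left corresponds to $(g_i\tilde{e})\otimes e_j$ on the right, where $e_1,\dots,e_k$ is the standard basis of $R[F]^k$. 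Both bases are exactly the bases used to define $\tau(N,\alpha)$ and $\tau(\hat{N},\hat{\alpha})$ respectively.

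Since Reidemeister torsion is an invariant of based acyclic complexes up to simple isomorphism, and since the complexes above are isomorphic as based $Q(R[F])$--complexes (after tensoring with $Q(R[F])$), we conclude $\tau(\hat{N},\hat{\alpha})\doteq \tau(N,\alpha)$; one checks that the residual ambiguity lies in $\{\pm\det(\alpha(g))\mid g\in\pi_1(N)\}$, which is exactly the declared indeterminacy. For the twisted Alexander polynomials, the same chain isomorphism yields $R[F]$--module isomorphisms $H_i(\hat{N};R[F]^k)\cong H_i(N;R[F]^{kd})$; since the order ideal depends only on the isomorphism class of the module over the Noetherian UFD $R[F]$, we obtain $\Delta^{\hat{\alpha}}_{\hat{N},i}\doteq \Delta^{\alpha}_{N,i}$.

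The main obstacle I expect is bookkeeping rather than conceptual: verifying that the right--module conventions on $C_*(\tilde{N})$ (where chains act on the right by $\sigma\cdot g=g^{-1}\sigma$) interact correctly with the induction isomorphism, and confirming that the choice of coset representatives contributes only a unit in $R[F]$ to the torsion, so that the equality $\doteq$ holds with precisely the indeterminacy recorded in Section \ref{section:twitau}.
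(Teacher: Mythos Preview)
Your proposal is correct and is precisely the standard chain--level Shapiro argument. The paper does not give its own proof of this theorem; it merely states the result and refers to \cite[Lemma~3.3]{FV08a} and \cite[Section~3]{HKL08}, whose arguments are exactly the one you outline: identify $\tilde{\hat{N}}$ with $\tilde{N}$, use the tensor adjunction $C_*(\tilde{N})\otimes_{\mathbb{Z}[\pi_1(N)]}\bigl(\mathbb{Z}[\pi_1(N)]\otimes_{\mathbb{Z}[\pi_1(\hat{N})]}R[F]^k\bigr)\cong C_*(\tilde{N})\otimes_{\mathbb{Z}[\pi_1(\hat{N})]}R[F]^k$, and track bases. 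Your remark that the remaining work is bookkeeping with the right--module conventions and the coset--representative ambiguity is accurate; neither introduces anything beyond the stated indeterminacy.
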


In its simplest form Theorem \ref{thm:shapiro} says that given an
epimorphism
$\gamma:\pi_1(N)\to G$ to a finite group the corresponding twisted Alexander polynomials of $N$ are just  untwisted Alexander polynomials of the corresponding finite cover.

\subsection{Twisted invariants of knots and links}
Let $L=L_1\cup\dots \cup L_m\subset S^3$ be an ordered oriented link.
Recall that given a representation $\gamma:\pi_1(S^3\setminus \nu L)\to \mbox{GL}(k,R)$ with $R$ a Noetherian UFD we can consider the
twisted Alexander polynomial $\Delta_L^\gamma$ as an element in the Laurent polynomial ring
$R[t_1^{\pm 1},\dots,t_m^{\pm 1}]$ and we
 write $\Delta^\gamma_{L,i}(t_1,\dots,t_m)=\Delta^\gamma_{L,i}\in R[t_1^{\pm 1},\dots,t_m^{\pm 1}]$.

 Given  $\epsilon_j \in \{ \pm 1\}$ for $j=1,\dots,m$ we denote by
$L^\epsilon$ the link $\epsilon_1 L_1,\dots,\epsilon_m  L_m$, i.e. the oriented link obtained from $L$ by reversing  the orientation
of all components with $\epsilon_j=-1$.

The following lemma is now an immediate consequence of the definitions:

\begin{lemma}\label{lem:reverse}
Given  $\epsilon_j \in \{ \pm 1\}$ for $j=1,\dots,m$ we have
\[ \Delta^\gamma_{L^\epsilon,i}(t_1,\dots,t_m)=\Delta^\gamma_{L,i}(t_1^{\epsilon_1},\dots,t_m^{\epsilon_m}).\]
\end{lemma}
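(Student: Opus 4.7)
The plan is to exploit the fact that reversing the orientations of some components of $L$ does not alter the underlying $3$--manifold $N:=S^3\setminus \nu L$. Consequently both $\pi_1(N)$ and the representation $\gamma$ are unchanged; only the identification of the abelianization with $\mathbb{Z}^m$ is affected, because that identification is set up using the oriented meridians.

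First I would make this change of identification explicit. Let $\mu_j$ denote a meridian of $L_j$ with $\mbox{lk}(\mu_j,L_j)=+1$. Under the abelianization $\psi$ associated to $L$ one has $\psi(\mu_j)=t_j$, whereas for $L^\epsilon$ the meridian of $\epsilon_j L_j$ with linking number $+1$ is $\mu_j^{\epsilon_j}$, so the abelianization $\psi^\epsilon$ associated to $L^\epsilon$ satisfies $\psi^\epsilon(\mu_j)=t_j^{\epsilon_j}$. Since the meridians generate the abelianization, this forces $\psi^\epsilon=\phi\circ\psi$, where $\phi$ is the ring automorphism of $R[t_1^{\pm 1},\dots,t_m^{\pm 1}]$ defined by $\phi(t_j)=t_j^{\epsilon_j}$. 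As $\gamma$ takes values in $\mbox{GL}(k,R)$, on which $\phi$ restricts to the identity, the two matrix representations satisfy $(\gamma\otimes\psi^\epsilon)(g)=\phi\bigl((\gamma\otimes\psi)(g)\bigr)$ for every $g\in\pi_1(N)$, with $\phi$ understood to act entrywise on matrices.

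Finally I would compare the twisted chain complexes. Fix any CW--structure on $N$; then $C_*(\tilde N)$ and the boundary matrices over $\mathbb{Z}[\pi_1(N)]$ are identical in both setups. Applying $\gamma\otimes\psi$ and $\gamma\otimes\psi^\epsilon$ produces boundary matrices over $R[t_1^{\pm 1},\dots,t_m^{\pm 1}]$ which differ entrywise by $\phi$, so a finite presentation of $H_i(N;R[t_1^{\pm 1},\dots,t_m^{\pm 1}]^k)$ in the $L^\epsilon$--setup is obtained from the corresponding presentation in the $L$--setup by applying $\phi$ to each entry. Because $\phi$ is a ring automorphism it commutes with the formation of elementary ideals, and hence with the formation of orders up to units, giving $\Delta^\gamma_{L^\epsilon,i}\doteq\phi\bigl(\Delta^\gamma_{L,i}\bigr)$. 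This is exactly the identity $\Delta^\gamma_{L^\epsilon,i}(t_1,\dots,t_m)=\Delta^\gamma_{L,i}(t_1^{\epsilon_1},\dots,t_m^{\epsilon_m})$ asserted in the lemma. The only point requiring care is the sign bookkeeping for the meridians; beyond that the statement is indeed immediate from the definitions, as indicated in the text.
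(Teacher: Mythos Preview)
Your argument is correct and is precisely the unpacking of ``immediate consequence of the definitions'' that the paper alludes to without writing out: the paper gives no proof beyond that phrase, and your identification of the change-of-variables automorphism $\phi$ on $R[t_1^{\pm1},\dots,t_m^{\pm1}]$ induced by the reversed meridian convention is exactly the content behind it.
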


We now turn to the study of twisted Alexander polynomials of sublinks.
Given a link $L=L_1\cup \dots L_{k-1}\cup L_k\subset S^3$ Torres \cite{To53} showed how to relate the
Alexander polynomials of $L$ and $L'=L_1\cup \dots L_{k-1}\subset S^3$. The following theorem of Morifuji \cite[Theorem~3.6]{Mo07} gives a generalization of the Torres condition to twisted Reidemeister torsion.

\begin{theorem}
Let $L=L_1\cup \dots L_{k-1}\cup L_k\subset S^3$ be a link. Write  $L'=L_1\cup \dots L_{k-1}\subset S^3$
and let $\gamma':\pi_1(S^3\setminus \nu L')\to \mbox{SL}(n,\mathbb{F})$ be a representation where $\mathbb{F}$ is a field.
Denote by $\gamma$ the representation $\pi_1(S^3\setminus \nu L)\to \pi_1(S^3\setminus \nu L')\to \mbox{SL}(n,\mathbb{F})$,
then
\[ \tau(L,\gamma)(t_1,\dots,t_{k-1},1) \doteq \left(T^n+\sum_{k=1}^{n-1}\epsilon_iT^i+(-1)^n\right)\cdot \tau(L',\gamma')(t_1,\dots,t_{k-1}) \]
where
\[ T\mathrel{\mathop:}=\prod_{i=1}^{k-1} t_i^{lk(L_i,L_k)} \]
and where $\epsilon_1,\dots,\epsilon_{n-1}$ are elements of $\mathbb{F}$.
\end{theorem}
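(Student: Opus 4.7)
The plan is to combine the change--of--variables formula of Proposition \ref{prop:changevartau} with the multiplicativity of Reidemeister torsion applied to the decomposition $N'=N\cup_{T^2}V$, where $N=S^3\setminus\nu L$, $N'=S^3\setminus\nu L'$, and $V$ is a closed tubular neighborhood of $L_k$ with $T^2=\partial V$.

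First, let $\phi:\mathbb{Z}^k\to\mathbb{Z}^{k-1}$ denote the projection killing the last coordinate (equivalently, the substitution $t_k\mapsto 1$), and let $\psi:\pi_1(N)\twoheadrightarrow\mathbb{Z}^k$ be the abelianization. Proposition \ref{prop:changevartau} gives
\[
\tau(L,\gamma)(t_1,\dots,t_{k-1},1)\doteq\tau\bigl(N,\gamma\otimes(\phi\circ\psi)\bigr).
\]
Since the meridian $\mu_k$ lies in the kernel of both $\phi\circ\psi$ and $\gamma$, the representation $\gamma\otimes(\phi\circ\psi)$ is the pullback under $i_*:\pi_1(N)\twoheadrightarrow\pi_1(N')$ of $\gamma'\otimes\psi'$, where $\psi'$ is the abelianization of $\pi_1(N')$; in particular $\tau(L',\gamma')=\tau(N',\gamma'\otimes\psi')$.

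Second, with compatible CW--structures on $N,V,N'$ satisfying $N\cap V=T^2$, the long exact sequence of the pair $(N',N)$ combined with excision $(N',N)\simeq(V,T^2)$ yields a short exact sequence of twisted cellular chain complexes
\[
0\to C_*(\widetilde N)\otimes_{\pi_1(N)}\mathbb{F}[\mathbb{Z}^{k-1}]^n\to C_*(\widetilde{N'})\otimes_{\pi_1(N')}\mathbb{F}[\mathbb{Z}^{k-1}]^n\to C_*(\widetilde V,\widetilde{T^2})\otimes_{\pi_1(V)}\mathbb{F}[\mathbb{Z}^{k-1}]^n\to 0.
\]
Using the product structure $V=D^2\times S^1$, the relative complex $C_*(\widetilde V,\widetilde{T^2})$ has one lifted $2$--cell (the meridian disk $D$) and one lifted $3$--cell (the ball obtained by cutting $V$ along $D$); the sole nonzero boundary map, after tensoring with $\gamma'\otimes\psi'$, equals $AT-I$, where $A=\gamma'(\lambda_k)\in\mathrm{SL}(n,\mathbb{F})$ and $T=\psi'(\lambda_k)=\prod_{i<k}t_i^{\mathrm{lk}(L_i,L_k)}$.

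Third, Milnor's multiplicativity of Reidemeister torsion applied to the above short exact sequence, together with the evaluation $\tau(V,T^2;\gamma'\otimes\psi')\doteq\det(AT-I)^{-1}$ arising from the sign convention of Theorem \ref{thm:Tu22boundary}, gives
\[
\tau\bigl(N,\gamma\otimes(\phi\circ\psi)\bigr)\doteq\det(AT-I)\cdot\tau(N',\gamma'\otimes\psi').
\]
Combining this with the first step and expanding $\det(AT-I)$ as a polynomial in $T$ of leading coefficient $\det A=1$ and constant term $(-1)^n$, with intermediate coefficients $\epsilon_i\in\mathbb{F}$, yields the stated identity. The main technical obstacle I foresee is the sign bookkeeping for Milnor's multiplicativity applied to a relative complex concentrated in degrees $2$ and $3$: one must pin down that the factor $\det(AT-I)$ appears with exponent $+1$ rather than $-1$, which requires carefully tracking the $(-1)^i$ signs in the Turaev--style torsion formula used throughout the paper, and checking that the various lifts of cells and of $\gamma'$ match across the excision identification.
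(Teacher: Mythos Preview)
The paper itself does not prove this theorem; it only states the result and attributes it to Morifuji \cite[Theorem~3.6]{Mo07}. Your approach---specialize $t_k\to 1$ via Proposition~\ref{prop:changevartau}, then apply multiplicativity of torsion to the gluing $N'=N\cup_{T^2}V$---is correct and is the standard route for Torres-type formulae.

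One simplification worth noting: rather than working with the relative complex $(V,T^2)$, you can invoke the Mayer--Vietoris identity $\tau(N')\cdot\tau(T^2)\doteq\tau(N)\cdot\tau(V)$ directly, exactly as in the proof of Lemma~\ref{lem:zerosurgery}. Here $\tau(V)\doteq\det(I-AT)^{-1}$ is the torsion of a circle, and $\tau(T^2)\doteq 1$; the latter equality still holds even though $\mu_k$ acts trivially, provided $\det(I-AT)\ne 0$ in $Q(\mathbb{F}[\mathbb{Z}^{k-1}])$, as a short direct computation with the standard CW structure on $T^2$ shows. This makes the exponent on $\det(AT-I)$ transparent and removes the sign-bookkeeping concern you flag at the end. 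The only case requiring a separate argument is $T=1$ with $1$ an eigenvalue of $A$, where acyclicity of the $V$ and $T^2$ complexes fails; there one must check directly that the left-hand side vanishes as well.
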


\section{Distinguishing knots and links}\label{section:dist}

In this section we will restrict ourselves to twisted Alexander polynomials of knots and links.
Recall that given an oriented knot $K\subset S^3$ the \emph{reverse} $\overline{K}$ is given by reversing the orientation.
Given a knot $K\subset S^3$ we denote by $K^*$ its \emph{mirror image}, i.e. the result of reflecting $K$ in  $S^2\subset S^3$.
The mirror image is also sometimes referred to as the \emph{obverse}.

Twisted Alexander invariants have so far been surprisingly little used to distinguish a knot from its mirror image or from its reverse
(cf. \cite{KL99b} though for a deep result showing that the knot $8_{17}$ and its reverse lie in different concordance classes).
It is an interesting question whether  Kitayama's normalized Alexander polynomial \cite{Kiy08a} can be used to distinguish a knot from its mirror image and its reverse.
We  refer to \cite{Ei07} for an interesting and very successful approach to distinguishing knots using `knot colouring polynomials'.

In this section we are from now on only concerned with distinguishing knot types of prime knots.
Here we say that two knots $K_1$ and $K_2$ are of the same knot type if there exists a homeomorphism $h$ of the sphere
with $h(K_1)=K_2$. Put differently, $K_1$ and $K_2$ are of the same knot type if they are related by  an isotopy
together possibly with taking the mirror image and possibly reversing the orientation.

The most common approach for distinguishing knots using twisted Alexander polynomials is to look at the set (or product) of all twisted Alexander polynomials
corresponding to a `characteristic set' of representations.
Note that due to Lemma \ref{lem:equ:rep} we can in fact restrict ourselves to conjugacy classes of a set of characteristic representations.

The following two types of characteristic sets have been used in the literature:
\begin{enumerate}
\item Given a knot $K$ consider
all conjugacy classes  of (all upper triangular, parabolic, metabelian, orthogonal, unitary) representations of $\pi_1(S^3\setminus \nu K)$ of a fixed dimension over a finite ring.
\item Given $K$ consider all conjugacy classes of homomorphisms of $\pi_1(S^3\setminus \nu K)$ onto a finite group $G$ composed with a fixed representation of $G$.
\end{enumerate}

%

The first approach was used in Lin's original paper \cite{Lin01} to distinguish knots with the same Alexander module.
Wada \cite{Wa94} also used the first approach to show that twisted Alexander polynomials can distinguish the Conway knot and the Kinoshita--Terasaka knot (cf. also \cite{In00}).
This shows in particular that twisted Alexander polynomials detect mutation.
In fact in \cite{FV07a} it is shown that twisted Alexander polynomials detect all mutants with 11 crossings or less.
Furthermore, in \cite{FV07a} the authors give an example of a pair of knot types of prime knots which can be distinguished using twisted Alexander polynomials,
even though the HOMFLY polynomial, Khovanov homology and knot Floer homology agree.

\begin{remark}
\begin{enumerate}
\item
The approach of using twisted Alexander polynomials corresponding to a characteristic set of conjugacy classes can be viewed as an extension
of the approach of Riley \cite{Ri71} who studied the first homology group corresponding to such a set of representations to distinguish the Conway knot from the Kinoshita--Terasaka knot.
\item By the work of Whitten \cite{Wh87} and Gordon--Luecke \cite{GL89} the knot type of a prime knot is determined by its fundamental group.
It is therefore at least conceivable that twisted Alexander polynomials can distinguish any two pairs of knot types.
\end{enumerate}
\end{remark}

The following theorem shows that twisted Alexander polynomials detect the unknot and the Hopf link.
The statement for knots was first proved by Silver and Williams \cite{SW06}, the extension to links was later proved
 in \cite{FV07a}.

\begin{theorem}\label{thm:swfv}
Let $L\subset S^3$ be a  link which is neither the unknot nor  the Hopf link. Then there exists an epimorphism $\gamma:\pi_1(S^3\setminus \nu L)\to G$ onto a finite group
$G$ such that $\Delta^\gamma_L \not\doteq  \pm 1$.
\end{theorem}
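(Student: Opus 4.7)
The approach I would take is to apply Shapiro's lemma (Theorem \ref{thm:shapiro}) to translate the statement into a question about untwisted Alexander polynomials of finite regular covers of the link exterior. Writing $X=S^3\setminus \nu L$ and letting $\psi:\pi_1(X)\to F$ be the abelianization, any epimorphism $\gamma:\pi_1(X)\twoheadrightarrow G$ onto a finite group with corresponding cover $p:\hat X\to X$ satisfies
\[ \Delta^\gamma_L \doteq \Delta^{\psi\circ p_*}_{\hat X,1} \]
by the remark following Theorem \ref{thm:shapiro}. So the theorem is equivalent to: if $L$ is neither the unknot nor the Hopf link, there exists a connected regular finite cover $\hat X \to X$ whose first Alexander polynomial with respect to $\psi\circ p_*$ is not $\pm 1$.

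Next I would verify that the two excluded cases really must be excluded. The unknot exterior deformation-retracts to $S^1$, and the Hopf link exterior to $T^2$; every finite cover of such a space is again homotopy equivalent to a circle or a torus, and the corresponding Alexander polynomial is trivially $\pm 1$. So the hypothesis is sharp, and moreover this confirms that only non-abelian $\pi_1$ behaviour of the exterior can be exploited.

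For the main step I would split into subcases depending on whether the untwisted Alexander polynomial $\Delta_L$ is already non-trivial. If $\Delta_L \not\doteq \pm 1$, take $G = \{1\}$ to be trivial and we are done. If $\Delta_L \doteq \pm 1$, then $L$ is Alexander-trivial and one must produce a non-trivial finite cover whose Alexander polynomial becomes non-trivial. For a multi-component link with $m\geq 2$ components other than the Hopf link, I would reduce to the knot case via a sublink argument together with the Torres-type formula of the theorem immediately preceding \ref{thm:swfv}: evaluating the polynomial of the full link at $t_k=1$ and comparing with the polynomial of a proper sublink forces non-triviality somewhere in the tower of covers unless $L$ is already the Hopf link. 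The essential remaining case is therefore a knot $K$ with $\Delta_K \doteq \pm 1$.

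The main obstacle is this final knot case, and it is here that the genuine content lies. The inputs are: (i) for a non-trivial knot, $\pi_1(X)$ is non-abelian (the unknot being the only knot with infinite cyclic group, by Dehn's lemma and the sphere theorem); (ii) $\pi_1(X)$ is residually finite (by Hempel's theorem combined with geometrization); and (iii) the commutator subgroup $\pi_1(X)'$, being non-trivial and finitely generated, has a non-trivial lower central series. Combining (ii) and (iii), one constructs a finite metabelian or metacyclic quotient $\gamma:\pi_1(X)\twoheadrightarrow G$ so that the corresponding finite cover $\hat X$ exhibits enough $H_1$-growth (over some $\mathbb{F}_p$) to force $\Delta^{\psi\circ p_*}_{\hat X,1}\not\doteq \pm 1$; the verification is carried out by explicit Fox calculus on a Wirtinger presentation, using Theorem \ref{thm:Tu22boundary} applied to the lifted $2$--complex. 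The crux is that the non-abelian structure of the knot group, invisible to the classical Alexander polynomial, is nevertheless visible through a judiciously chosen finite quotient—and pinning down such a quotient is precisely the content of the Silver--Williams argument.
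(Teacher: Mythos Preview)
The paper does not give its own proof of this theorem; it attributes the knot case to Silver--Williams \cite{SW06} and the link case to \cite{FV07a}. So there is no paper proof to compare against, and your proposal must stand on its own.

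Your outline has a genuine gap precisely in the case you identify as the crux: a nontrivial knot $K$ with $\Delta_K \doteq 1$. For such a knot the Alexander module $H_1(S^3\setminus\nu K;\mathbb{Z}[t^{\pm1}])$ vanishes, which means the commutator subgroup $\pi_1(X)'$ is \emph{perfect}. Consequently its lower central series is constant and there are no nontrivial metabelian quotients of $\pi_1(X)$ beyond the abelianization itself. Your item~(iii) (``$\pi_1(X)'$ has a non-trivial lower central series'') is therefore false in exactly the case you need it, and the proposed construction of a metabelian or metacyclic quotient carrying nontrivial twisted polynomial cannot get off the ground. Residual finiteness alone does not rescue this: one needs a finite quotient $G$ for which the associated cover has nontrivial Alexander module, and producing such a $G$ for an Alexander-trivial knot requires genuinely non-metabelian input. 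This is the actual content of \cite{SW06}, which you invoke by name but do not supply; the argument there is of a rather different character (representation shifts and symbolic dynamics).

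A secondary issue: your reduction of the multi-component case to knots via the Torres-type formula is not robust. For Brunnian links such as the Borromean rings every proper sublink is trivial, so passing to sublinks yields no information, yet such links are certainly neither the unknot nor the Hopf link. The extension in \cite{FV07a} handles links directly rather than by such a reduction.
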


Given a knot $K$ we denote by $t(K)$ its tunnel number.
Theorem \ref{thm:swfv} was used by Pajitnov \cite{Pa08} to show that for any knot $K$ there exists $\lambda>0$
such that $t(nK)\geq \lambda n-1$.

\section{Twisted Alexander polynomials and concordance}\label{section:conc}

We first recall the relevant definitions.
Let $L=L_1\cup \dots \cup L_m\subset S^3$ be an oriented $m$--component link.
We say that $L$ is (topologically) \emph{slice} if the components bound $m$ disjointly embedded locally flat disks in $D^4$.
Given two ordered oriented $m$--component links $K=K_1\cup \dots \cup K_m\subset S^3$ and $L=L_1\cup \dots \cup L_m\subset S^3$
we say that $K$ and $L$ are \emph{concordant} if there exist $m$ disjointly embedded locally flat cylinders $C_1,\dots,C_m$ in $S^3\times [0,1]$
such that $\partial C_i=K_i\times 0\, \cup \, -L_i\times 1$. (Given an oriented knot $K\subset S^3$ we write $-K=\overline{K}^*$, i.e. $-K$ is  the knot obtained from the mirror image of $K$ by reversing the orientation.) Note that two knots $K_1$ and $K_2$ are concordant
if and only if $K_1\# -K_2$ is slice, and a link is slice if and only if it is concordant to the unlink.

\subsection{Twisted Alexander polynomials of zero--surgeries}\label{section:zerosurgery}

Given a knot $K\subset S^3$ the zero--framed surgery $N_K$ of $S^3$ along $K$ is defined to be
\[ N_K =S^3\setminus \nu K \cup_{T} S^1\times D^2\]
where $T=\partial (S^3\setminus \nu K)$ and $T$ is glued to $S^1\times D^2$ by gluing the meridian to $S^1\times \mbox{pt}$.
The inclusion map induces an isomorphism $\mathbb{Z}\cong H_1(S^3\setminus \nu K;\mathbb{Z})\xrightarrow{\cong} H_1(N_K;\mathbb{Z})$.
It is well--known that understanding the zero--framed surgery $N_K$
is the key to determining whether $K$ is slice or not
(cf. e.g. \cite{COT03}, \cite[Proposition~3.1]{FT05}, \cite[Proposition~2.1]{CFT09}).
We therefore prefer to formulate the sliceness obstructions in terms of the twisted Alexander polynomials of the zero--framed surgery.
The following lemma, together with Proposition \ref{prop:taudelta}, relates the twisted invariants of the zero--framed surgery with the twisted invariants of the knot complement.
We refer to \cite[Lemma~6.3]{KL99a} and \cite[Section~VII]{Tu02a} for very similar statements.

\begin{lemma}\label{lem:zerosurgery}
Let $K \subset S^3$ be a knot. Denote its meridian by $\mu$.
Let  $\alpha:\pi_1(N_K)\to \mbox{GL}(k,R[t^{\pm 1}])$ be a representation
such that $\det(\alpha(\mu)-\mbox{id})\ne 0$.
We denote the inclusion induced representation $\alpha:\pi_1(S^3\setminus \nu K)\to \pi_1(N_K)\to \mbox{GL}(k,R[t^{\pm 1}])$ by $\alpha$ as well.
Then
\[ \tau(S^3\setminus \nu K,\alpha)=\tau(N_K,\alpha) \cdot \det(\alpha(\mu)-\mbox{id}).\]
\end{lemma}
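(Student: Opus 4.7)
The plan is to carry out a direct cellular computation and apply Theorems \ref{thm:Tu22} and \ref{thm:Tu22boundary}. First I choose a CW structure on $Y := S^3 \setminus \nu K$ as in Theorem \ref{thm:Tu22boundary}: one $0$-cell, $n$ $1$-cells (with $h_s = \mu$ for some index $s$), and $n-1$ $2$-cells, with the boundary torus $T$ a subcomplex. I then extend this to a CW structure on $N_K$ by attaching one further $2$-cell $d_n$ along the zero-framed longitude $\lambda$ (coming from the meridional disk $D^2\times\mathrm{pt}$ of the attached solid torus) and one $3$-cell $e$ filling in the interior of $S^1\times D^2$. This gives $N_K$ a CW decomposition with one $0$-cell, $n$ $1$-cells, $n$ $2$-cells, and one $3$-cell, matching the setup of Theorem \ref{thm:Tu22}.

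With this setup the boundary matrix $A_1$ is identical for both complexes, while $A_2$ for $N_K$ is the matrix $A_2^Y$ of $Y$ augmented by one additional column---the Fox derivative of the longitude word---corresponding to $d_n$. Applying Theorem \ref{thm:Tu22} with $r = n$ and $s$ chosen so that $h_s = \mu$, I read off $B_1 = 1 - \mu$ (hence $\alpha(B_1) = \id - \alpha(\mu)$), and the matrix $B_2$ is obtained from $A_2$ by deleting the $n$-th column and the $s$-th row. But the first deletion recovers $A_2^Y$, so $B_2$ coincides with the $B_2^Y$ of Theorem \ref{thm:Tu22boundary} and $\det\alpha(B_2) = \det\alpha(B_2^Y)$.

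The decisive step is identifying the $n$-th row $B_3$ of $A_3$, which records the coefficient of $d_n$ in $\partial_3(e)$. Geometrically, cutting $S^1\times D^2$ along $d_n$ produces a $3$-ball whose two $d_n$-faces, when lifted to the universal cover of $N_K$, are related by the deck transformation $\mu$ (because $\mu$ corresponds to the $S^1$-direction of $S^1\times D^2$ under our gluing convention). Thus $B_3 = 1 - \mu$ and $\det\alpha(B_3) = \det(\id-\alpha(\mu))$. Substituting into Theorems \ref{thm:Tu22} and \ref{thm:Tu22boundary} yields
\[ \tau(N_K,\alpha)\doteq\det(\id-\alpha(\mu))^{-2}\det\alpha(B_2^Y)\quad\text{and}\quad \tau(Y,\alpha)\doteq\det(\id-\alpha(\mu))^{-1}\det\alpha(B_2^Y), \]
whose ratio gives $\tau(Y,\alpha)\doteq\tau(N_K,\alpha)\cdot\det(\alpha(\mu)-\id)$, as required (using $\det(\id-\alpha(\mu))\doteq\det(\alpha(\mu)-\id)$).

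The main obstacle is this geometric identification of $B_3$---arranging the CW structure and the lifts to the universal cover so that the $d_n$-coefficient in $\partial_3(e)$ is precisely $1-\mu$ rather than some less transparent element. An alternative route would be to invoke multiplicativity of Reidemeister torsion for the decomposition $N_K = Y\cup_T(S^1\times D^2)$ together with the direct computations $\tau(T,\alpha)\doteq 1$ and $\tau(S^1\times D^2,\alpha)\doteq\det(\alpha(\mu)-\id)^{-1}$, but this trades cellular bookkeeping for convention-tracking across Mayer--Vietoris.
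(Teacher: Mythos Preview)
Your proof is correct, but it takes a genuinely different route from the paper. The paper's proof is precisely the ``alternative route'' you sketch in your final paragraph: it invokes multiplicativity of Reidemeister torsion for the decomposition $N_K = (S^3\setminus\nu K)\cup_T (S^1\times D^2)$, together with the standard computations $\tau(T,\alpha)\doteq 1$ and $\tau(S^1\times D^2,\alpha)\doteq \det(\alpha(\mu)-\mathrm{id})^{-1}$, and then rearranges. So you have in fact anticipated the paper's argument and relegated it to a remark.

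Your direct cellular approach is more self-contained in the sense that it stays entirely within the framework of Theorems~\ref{thm:Tu22} and~\ref{thm:Tu22boundary}, avoiding any appeal to the Mayer--Vietoris formula for torsion. The price is the bookkeeping you flag yourself: arranging the CW structure so that $T$ is a subcomplex and so that the $d_n$-coefficient of $\partial_3(e)$ is exactly $1-\mu$. The paper's approach trades this for the (well-known) torsion computations of the torus and solid torus, which makes for a shorter write-up. One small caveat: Theorems~\ref{thm:Tu22} and~\ref{thm:Tu22boundary} are stated in the paper for tensor representations $\gamma\otimes\psi$, whereas the lemma concerns a general $\alpha$; your argument still goes through because the underlying result \cite[Theorem~2.2]{Tu01} only needs $\det(\alpha(B_1))\ne 0$ and $\det(\alpha(B_3))\ne 0$, both of which follow from the hypothesis $\det(\alpha(\mu)-\mathrm{id})\ne 0$ in your setup.
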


\begin{proof}
The proof of the lemma is standard and well--known. We therefore give just a quick summary.
Consider the decomposition $ N_K = S^3\setminus \nu K \cup_{T} S$ as above, where $S = S^1 \times D^2$.
Using the Mayer--Vietoris theorem for torsion we obtain that
\[ \tau(N_K,\alpha) =\frac{\tau(S^3\setminus \nu K,\alpha)\cdot \tau(S,\alpha)} { \tau( T,\alpha)}.\]
It is well--known that the torsion of the torus is trivial (c.f. e.g \cite{KL99a})
and that the torsion of $S$ is given by
\[ \tau(S,\alpha) = \frac{1}{ \det(\alpha(\mu)-\mbox{id})}.\]
The lemma now follows immediately.
\end{proof}

%

\subsection{Twisted Alexander polynomials and knot concordance}\label{section:slice}

The first significant result in the study of slice knots is due to
Fox and Milnor \cite{FM66} who showed that if $K$ is a slice knot, then $\Delta_K \doteq f(t)f(t^{-1})$ for some $f(t)\in \mathbb{Z}[t^{\pm 1}]$.

In this section we will give an exposition and a slight generalization of the Kirk--Livingston \cite{KL99a} sliceness obstruction theorem, which generalizes the Fox--Milnor condition. Note that we will state this obstruction using a slightly different setup, but Theorem \ref{thm:slice} is
already contained in \cite{KL99a} and \cite{HKL08}.

Let $K$ be a knot, as above we  denote by $N_K$ the zero--framed surgery of $S^3$ along $K$.
Now suppose that $K$ has a  slice disk $D$. Note that $\partial (D^4\setminus \nu D)=N_K$.
Let $R$ be a ring with (possibly trivial) involution and as usual we extend the involution to $R[t^{\pm 1}]$ by $\overline{t}\mathrel{\mathop:}=t^{-1}$.
Let  $\alpha:\pi_1(N_K)\to \mbox{GL}(R[t^{\pm 1}],k)$ be a unitary  representation.
Assume that $\alpha$ has  the following two properties:
\begin{enumerate}
\item $\alpha$ factors through a representation $\pi_1(D^4\setminus \nu D)\to \mbox{GL}(k,R[t^{\pm 1}])$, and
\item the induced twisted modules $H_*(D^4\setminus \nu D;R[t^{\pm 1}]^k)$ are $R[t^{\pm 1}]$--torsion,
\end{enumerate}
then  using Poincar\'e duality for Reidemeister torsion (cf. \cite[Theorem~6.1]{KL99a}) one can show that  $\tau(N_K,\alpha)\doteq f(t)\overline{f(t)}$ for some $f(t)\in R(t)$. We refer to \cite[Corollary~5.3]{KL99a} for details.

\begin{remark}
Note that the canonical representation $\pi_1(N_K)\to \mbox{GL}(1,\mathbb{Z}[t^{\pm 1}])$ extends over any slice disk complement.
It can be shown that  $H_*(D^4\setminus \nu D;\mathbb{Z}[t^{\pm 1}])$ is $\mathbb{Z}[t^{\pm 1}]$--torsion. We can therefore recover the Fox--Milnor theorem from this discussion.
\end{remark}

Most of the ideas and techniques of finding  representations satisfying (1) and (2) go back to the seminal work of Casson and Gordon \cite{CG86}.
We follow the approach taken in \cite{Fr04} which is inspired by Letsche \cite{Let00} and Kirk--Livingston \cite{KL99a,HKL08}.

In the following let $K$ again be a knot in $S^3$.
We denote by $W_k$ the  cyclic $k$--fold branched cover of $K$.
Note that $H_1(W_k;\mathbb{Z})$ has a natural $\mathbb{Z}/k$--action, and we can therefore view $H_1(W_k;\mathbb{Z})$ as a $\mathbb{Z}[\mathbb{Z}/k]$--module.
If $H_1(W_k;\mathbb{Z})$ is finite, then there exists  a non--singular linking form
\[\lambda_k:H_1(W_k;\mathbb{Z})\times H_1(W_k;\mathbb{Z})\to \mathbb{Q}/\mathbb{Z}\]
with respect to which $\mathbb{Z}/k$ acts via isometries. We say that $M\subset H_1(W_k;\mathbb{Z})$ is a \emph{metabolizer} of the linking form
if $M$ is a $\mathbb{Z}[\mathbb{Z}/k]$--submodule of $H_1(W_k;\mathbb{Z})$ such that $\lambda_k(M,M)=0$ and such that $|M|^2=|H_1(W_k;\mathbb{Z})|$.
It is well--known that if $K$ is slice, then $\lambda_k$ has a metabolizer for any prime power $k$. We refer to \cite{Go78} for details.

Note that $H^1(N_K;\mathbb{Z})=\mathbb{Z}$, in particular $H^1(N_K;\mathbb{Z})$ has a unique generator (up to sign) which we denote by $\phi$.
We now consider the Alexander module  $H_1(N_K;\mathbb{Z}[t^{\pm 1}])$ which we denote by $H$.
Note that $H$ is isomorphic to the usual Alexander module of $K$.
It is well--known that given $k$ there exists a canonical isomorphism
$H/(t^k-1)\to H_1(W_k;\mathbb{Z})$ (cf. e.g. \cite[Corollary~2.4]{Fr04} for details).
Now let $\mu \in \pi_1(N_K)$ be an element  with $\phi(\mu)=1$.
Note that for any $g\in \pi$ we have $\phi(\mu^{-\phi(g)}g)=0$, in particular we can consider its image $[\mu^{-\phi(g)}g]$ in the abelianization $H_1(\ker(\phi))$, which we can identify with $H$.
Then we have a well--defined map
\[ \pi \to \mathbb{Z} \ltimes H \to \mathbb{Z} \ltimes H/(t^k-1) \]
where the first map is given by sending $g\in \pi$ to $(\phi(g),[\mu^{-\phi(g)}g])$. Here $n\in \mathbb{Z}$ acts on $H$ and on $H/(t^k-1)$ via multiplication by $t^n$.
We refer to \cite{Fr04} and \cite{BF08} for details.

Fix $k \in \mathbb{N}$.
Let  $\chi:H_1(W_k;\mathbb{Z})\to \mathbb{Z}/q\to S^1$ be a character. We denote the induced character $H\to H/(t^k-1)=H_1(W_k;\mathbb{Z})\to S^1$ by $\chi$ as well.
Let $\zeta_q$ be a primitive $q$--th root of unity.
Then it is straightforward  to verify that
 $$ \begin{array}{rcl} \alpha(k,\chi) :\pi \to  \mathbb{Z}\ltimes H/(t^{k}-1)& \to &\mbox{GL}(k,\mathbb{Z}[\zeta_q][t^{\pm 1}]) \\
    (j,h) &\mapsto &
\begin{pmatrix}
 0& \dots &0&t \\
 1&0& \dots &0 \\
\vdots &\ddots &  & \vdots\\
    0&\dots &1&0 \end{pmatrix}^j
\begin{pmatrix}
\chi(h) &0& \dots&0 \\
 0&\chi(th) & \dots& 0\\
\vdots&&\ddots & \vdots\\ 0&0&\dots &\chi(t^{k-1}h) \end{pmatrix} \end{array} $$
defines a unitary representation.
(Note the ``$t$'' in the upper right corner.)
Also note that $\alpha(k,\chi)$ is not a tensor representation (cf. \cite{HKL08}).

We can now formulate the following obstruction theorem which is well--known to the experts. It can be proved using the above discussion,
Proposition \ref{prop:deltataualpha}, various
well--known arguments going back to Casson and Gordon \cite{CG86} and \cite[Lemma~6.4]{KL99a}.
We also refer to Letsche \cite{Let00} and \cite{Fr04}  for more information.

\begin{theorem} \label{thm:slice}
Let $K$ be a slice knot. Then for any prime power $k$ there exists a metabolizer $M$ of $\lambda_k$
such that for any odd prime power $n$ and any  character $\chi:H_1(W_k;\mathbb{Z})\to \mathbb{Z}/n\to S^1$ vanishing on $M$ we have
\[ \Delta_{N_K}^{\alpha(k,\chi)}\doteq f(t)\overline{f(t)} \]
for some $f(t)\in Q(\mathbb{Z}[\zeta_n])[t^{\pm 1}]=\mathbb{Q}(\zeta_n)[t^{\pm 1}]$.
\end{theorem}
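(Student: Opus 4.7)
My plan is to adapt the Casson--Gordon--Letsche--Kirk--Livingston strategy: extend $\alpha(k,\chi)$ over a slice-disk exterior, verify that the resulting twisted complex on the exterior is acyclic over the appropriate quotient field, and invoke Poincar\'e--Lefschetz duality for Reidemeister torsion to obtain an $f\overline f$ factorization of $\tau(N_K,\alpha(k,\chi))$; the passage to $\Delta_{N_K}^{\alpha(k,\chi)}$ then follows from Propositions \ref{prop:deltataualpha} and \ref{prop:dualitydelta}. Fix a slice disk $D \subset D^4$ for $K$ and set $E_D = D^4 \setminus \nu D$, so $\partial E_D = N_K$ and $H_1(E_D;\Z) \cong \Z$, generated by the meridian $\mu$. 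For a prime power $k$, let $V_k$ denote the $k$-fold cyclic branched cover of $(D^4,D)$; then $\partial V_k = W_k$ and the $\Z/k$--deck action on $W_k$ extends over $V_k$. I take
\[ M \mathrel{\mathop:}= \ker\bigl(H_1(W_k;\Z) \to H_1(V_k;\Z)\bigr). \]
A standard half-lives/half-dies argument shows $M$ is a metabolizer of $\lambda_k$, and the deck-invariance of the map makes $M$ a $\Z[\Z/k]$--submodule.

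Now assume $\chi: H_1(W_k;\Z) \to \Z/n$ vanishes on $M$, so $\chi$ factors through $\chi': H_1(V_k;\Z) \to \Z/n$. Using the factorization of $\alpha(k,\chi)$ through $\Z \ltimes H_1(W_k;\Z)$ described preceding the theorem, I extend $\alpha(k,\chi)$ to a representation
\[ \tilde\alpha : \pi_1(E_D) \to \mbox{GL}(k, \Z[\zeta_n][t^{\pm 1}]) \]
that agrees with $\alpha(k,\chi)$ on $\pi_1(N_K)$. The $\Z$--coordinate is supplied by $\pi_1(E_D) \to H_1(E_D) = \Z$, while the $H_1(W_k;\Z)$--coordinate extends to $H_1(V_k;\Z)$ via $\chi'$, using the abelianization of $\ker(\pi_1(E_D) \to \Z/k)$ and the fact that this abelianization surjects onto $H_1(V_k;\Z)$.

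The main technical step---and the hardest part of the argument---is to verify that $H_*(E_D; R[t^{\pm 1}]^k)$ is $R[t^{\pm 1}]$--torsion, with $R = \Z[\zeta_n]$. The Euler characteristic of $E_D$ vanishes, and $H_0$ vanishes over $Q(R[t^{\pm 1}])$ because $\det(\tilde\alpha(\mu) - \mbox{id}) \neq 0$; torsion of $H_1$ and $H_2$ then follows from a half-lives/half-dies computation on the $k$-fold cyclic cover of $E_D$, where the odd-prime-power hypothesis on $n$ ensures that the $\chi$--eigenspaces of the relevant homology of $V_k$ decompose suitably. Granted this, the Poincar\'e--Lefschetz duality theorem for Reidemeister torsion \cite[Theorem~6.1]{KL99a}, applied to $(E_D, N_K)$ with the unitary representation $\tilde\alpha$, yields
\[ \tau(N_K, \alpha(k,\chi)) \doteq h(t)\,\overline{h(t)} \]
for some $h(t) \in Q(R[t^{\pm 1}])$.

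To conclude, since $\alpha(k,\chi)$ is unitary and $N_K$ is closed, Proposition \ref{prop:dualitydelta} gives $\overline{\Delta_{N_K,2}^{\alpha(k,\chi)}} \doteq \Delta_{N_K,0}^{\alpha(k,\chi)}$, so the product $\Delta_{N_K,0}^{\alpha(k,\chi)}\,\Delta_{N_K,2}^{\alpha(k,\chi)}$ is, up to a unit, a norm form $g(t)\overline{g(t)}$. Proposition \ref{prop:deltataualpha} then gives
\[ \Delta_{N_K}^{\alpha(k,\chi)} \doteq \tau(N_K, \alpha(k,\chi)) \cdot \Delta_{N_K,0}^{\alpha(k,\chi)}\,\Delta_{N_K,2}^{\alpha(k,\chi)} \doteq h(t)\overline{h(t)}\cdot g(t)\overline{g(t)}. \]
Clearing denominators and using unique factorization in $\Q(\zeta_n)[t^{\pm 1}]$, I absorb the resulting expression into $f(t)\overline{f(t)}$ with $f(t) \in \Q(\zeta_n)[t^{\pm 1}]$, completing the proof.
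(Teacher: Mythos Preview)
Your proposal is correct and follows essentially the same approach as the paper, which itself gives only a sketch: the paper says the theorem ``can be proved using the above discussion, Proposition~\ref{prop:deltataualpha}, various well--known arguments going back to Casson and Gordon \cite{CG86} and \cite[Lemma~6.4]{KL99a},'' and your outline expands precisely these ingredients (extend $\alpha(k,\chi)$ over the slice-disk exterior via the Casson--Gordon metabolizer, verify the torsion condition, apply \cite[Theorem~6.1]{KL99a} for the $f\overline f$ factorization of torsion, and pass to $\Delta_{N_K}^{\alpha(k,\chi)}$ via Propositions~\ref{prop:deltataualpha} and~\ref{prop:dualitydelta}). One small caveat: your metabolizer $M=\ker\bigl(H_1(W_k)\to H_1(V_k)\bigr)$ need not literally satisfy $|M|^2=|H_1(W_k)|$ when $H_1(V_k)$ has nontrivial torsion; the standard Casson--Gordon bootstrap (cf.\ \cite{CG86}, \cite{Let00}, \cite[Section~2]{Fr04}) refines this to an honest metabolizer through which prime-power characters extend, and this refinement is exactly what the paper's reference to ``well--known arguments going back to Casson and Gordon'' is pointing at.
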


Note that  the original sliceness obstruction of Kirk and Livingston \cite[Theorem~6.2]{KL99a} (cf. also \cite[Theorem~8.1]{HKL08} and \cite[Theorem~5.4]{Liv09}) gives an obstruction in terms of twisted Alexander polynomials of
a cyclic cover of $S^3\setminus \nu K$ corresponding to one--dimensional representations. Using
Theorem \ref{thm:shapiro} and Lemma \ref{lem:zerosurgery} (cf. also \cite[Lemma~6.3]{KL99a}) one can show that the sliceness obstruction of Theorem \ref{thm:slice}
is equivalent to the original formulation by Kirk and Livingston.

\begin{remark}
In Theorem \ref{thm:slice} and later in Theorems \ref{thm:slicetensor} and \ref{thm:doublyslice} we restrict ourselves to characters of odd prime power.
Similar theorems also hold for characters of even prime power order, we refer to \cite[Lemma~6.4]{KL99a} and \cite[Section~5]{Liv09} for more information.
\end{remark}

Theorem \ref{thm:slice} can be somewhat generalized using tensor representations.
In the following let $k_1,\dots,k_n \in \mathbb{N}$. Assume we are given  characters $\chi_i:H_1(W_{k_i};\mathbb{Z})\to \mathbb{Z}/q\to S^1, i=1,\dots,n$
we also get a tensor representation
\[ \alpha(k_1,\chi_1)\otimes \dots \otimes \alpha(k_n,\chi_n)\to \mbox{GL}(k_1\cdot\dots \cdot k_n,\mathbb{Z}[\zeta_q][t^{\pm 1}]).\]
We refer to \cite[Proposition~4.6]{Fr04} for more information.

The following theorem can be proved by modifying the proof of Theorem \ref{thm:slice} along the lines of
\cite[Theorem~4.7]{Fr04}.

\begin{theorem} \label{thm:slicetensor}
Let $K$ be a slice knot. Let $q$ be an odd prime power and let $k_1,\dots,k_n$ be coprime prime powers.
Then there exist metabolizers $M_1,\dots,M_n$ of the linking forms $\lambda_{k_1},\dots,\lambda_{k_n}$
such that for any choice of  characters $\chi_i:H_1(W_{k_i};\mathbb{Z})\to \mathbb{Z}/q\to S^1,i=1,\dots,n$  vanishing on $M_i$ we have
\[ \Delta_{N_K}^{\alpha(k_1,\chi_1)\otimes \dots \otimes \alpha(k_n,\chi_n)}\doteq f(t)\overline{f(t)} \]
for some $f(t)\in Q(\mathbb{Z}[\zeta_q])[t^{\pm 1}]=\mathbb{Q}(\zeta_q)[t^{\pm 1}]$.
\end{theorem}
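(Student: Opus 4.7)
The plan is to extend the proof of Theorem~\ref{thm:slice}, which is the case $n=1$, by propagating the slice-disk obstruction through all tensor factors simultaneously; this essentially follows \cite[Theorem~4.7]{Fr04} \emph{mutatis mutandis}. Let $D \subset D^4$ be a locally flat slice disk for $K$, set $X = D^4 \setminus \nu D$, and note that $\partial X = N_K$ and that inclusion induces an isomorphism $H_1(N_K;\mathbb{Z}) \xrightarrow{\cong} H_1(X;\mathbb{Z})$. For each $i$, let $V_{k_i} \to X$ be the $k_i$-fold cyclic cover; topologically this agrees with the $k_i$-fold cyclic branched cover of $D^4$ along $D$. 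The classical Casson--Gordon argument (cf.\ \cite{Go78}) shows that
\[ M_i \mathrel{\mathop:}= \ker\!\bigl(H_1(W_{k_i};\mathbb{Z}) \to H_1(V_{k_i};\mathbb{Z})\bigr) \]
is a metabolizer of the linking form $\lambda_{k_i}$, and these will be the metabolizers required by the theorem.

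If $\chi_i$ vanishes on $M_i$, then $\chi_i$ factors through $H_1(V_{k_i};\mathbb{Z})$, and, via the semidirect-product description of $\alpha(k_i,\chi_i)$ recalled just before Theorem~\ref{thm:slice}, the same matrix formula produces an extension $\widetilde{\alpha}_i \colon \pi_1(X) \to \mbox{GL}(k_i,\mathbb{Z}[\zeta_q][t^{\pm 1}])$ of $\alpha(k_i,\chi_i)$ (cf.\ \cite[Proposition~4.6]{Fr04}); the coprimality of the prime powers $k_1,\ldots,k_n$ lets these extensions be chosen compatibly against a single $\pi_1(X)$. The tensor representation $\alpha \mathrel{\mathop:}= \alpha(k_1,\chi_1) \otimes \cdots \otimes \alpha(k_n,\chi_n)$ is unitary (as a tensor product of unitary representations) and extends across $X$ as $\widetilde{\alpha} \mathrel{\mathop:}= \widetilde{\alpha}_1 \otimes \cdots \otimes \widetilde{\alpha}_n$. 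A standard Euler-characteristic computation, combined with the non-triviality of the characters $\chi_i$ needed to force $H_0$ and $H_3$ of the rationalized complex to vanish, shows that $C_*(\widetilde{X}) \otimes_{\mathbb{Z}[\pi_1(X)]} Q(\mathbb{Z}[\zeta_q][t^{\pm 1}])^{k_1\cdots k_n}$ is acyclic.

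With acyclicity and the unitary extension in place, Poincar\'e duality for Reidemeister torsion applied to the pair $(X, N_K)$ (cf.\ \cite[Theorem~6.1]{KL99a}) yields
\[ \tau(N_K, \alpha) \doteq f_0(t)\,\overline{f_0(t)} \qquad \text{for some } f_0(t) \in \mathbb{Q}(\zeta_q)(t). \]
Since $N_K$ is closed and $\alpha$ is unitary, Proposition~\ref{prop:dualitydelta} (combined with the invariance of the twisted Alexander polynomial under conjugation) gives $\Delta_{N_K,2}^{\alpha} \doteq \overline{\Delta_{N_K,0}^{\alpha}}$, so the product $\Delta_{N_K,0}^{\alpha}\,\Delta_{N_K,2}^{\alpha}$ is itself of the form $g(t)\,\overline{g(t)}$. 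Proposition~\ref{prop:deltataualpha} then gives
\[ \Delta_{N_K}^{\alpha} \doteq \tau(N_K,\alpha) \cdot \Delta_{N_K,0}^{\alpha}\,\Delta_{N_K,2}^{\alpha} \doteq \bigl(f_0(t)\,g(t)\bigr)\,\overline{\bigl(f_0(t)\,g(t)\bigr)}, \]
and a standard Fox--Milnor-type unique factorization argument in the UFD $\mathbb{Q}(\zeta_q)[t^{\pm 1}]$ upgrades the rational function $f_0\,g$ to an honest Laurent polynomial $f \in \mathbb{Q}(\zeta_q)[t^{\pm 1}]$ with $\Delta_{N_K}^{\alpha} \doteq f(t)\,\overline{f(t)}$.

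The main obstacle, I expect, will not be the Poincar\'e duality step (which is by now routine once acyclicity is in hand) but rather the bookkeeping required to extend all of the $\alpha(k_i,\chi_i)$ simultaneously across a common $\pi_1(X)$: each factor extends on its own terms because $\chi_i$ vanishes on $M_i$, and the key verification is that the coprimality of the prime powers $k_i$ lets the distinct cyclic-cover pictures be reconciled without spurious compatibility obstructions. A secondary technical point is confirming the acyclicity of the twisted complex on $X$ for the tensor representation, for which the non-triviality of each $\chi_i$ (which may be assumed without loss of generality, as the trivial character case reduces to a lower-dimensional subcase) is essential.
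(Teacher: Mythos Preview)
Your proposal is correct and follows exactly the route the paper indicates: the paper's own ``proof'' is a single sentence stating that the result is obtained by modifying the proof of Theorem~\ref{thm:slice} along the lines of \cite[Theorem~4.7]{Fr04}, and you carry out precisely that modification (slice-disk complement, Casson--Gordon metabolizers from the kernels into the cyclic branched covers of $D^4$, simultaneous extension of the $\alpha(k_i,\chi_i)$ and hence of their tensor product, Poincar\'e duality for torsion, then passage from torsion to Alexander polynomial via Proposition~\ref{prop:deltataualpha}). Your sketch is in fact more detailed than what the paper provides; the only minor inaccuracy is that non-triviality of the $\chi_i$ is not what forces $H_0$ of the twisted complex on $X$ to vanish---that already follows from the $t$-dependence built into $\alpha(k_i,\chi_i)(\mu)$---but this does not affect the argument.
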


\begin{remark}
 It was first observed by Letsche \cite{Let00} that non--prime power dimensional representations can give rise to sliceness obstructions. It is shown in \cite{Fr03}
that Letsche's non--prime power representations are given by the tensor representations considered in Theorem \ref{thm:slicetensor}.
\end{remark}

The sliceness obstruction coming from twisted Alexander polynomials is in some sense less powerful than
the invariants of Casson and Gordon \cite{CG86} (cf. \cite[Section~6]{KL99a} for a careful discussion) and Cochran--Orr--Teichner \cite{COT03}.
But to date  twisted Alexander polynomials give the strongest sliceness obstruction for algebraically slice knots
that can be computed efficiently.
The Kirk--Livingston sliceness obstruction theorem  has been used by various authors to produce many interesting examples,
some of which we list below:
\begin{enumerate}
\item Kirk and Livingston \cite{KL99b} apply twisted Alexander polynomials to show that
some knots (e.g. $8_{17}$) are not concordant to their inverses. This shows in particular that
knots are not necessarily concordant to their mutants. In \cite{KL99b} it is also shown that in general a knot
is not even concordant to a positive mutant.
\item Tamulis \cite{Tam02} considered knots with at most ten crossings which have algebraic concordance order two.
Tamulis used twisted Alexander polynomials to show that all but one of these knots do not have order two in the knot concordance group.
\item
In \cite{HKL08} Herald, Kirk and Livingston consider all knots with up to twelve crossings.
Eighteen of these knots are algebraically slice but can not be shown to be slice using elementary methods.
In \cite{HKL08} twisted Alexander polynomials are used to show that sixteen of these knots are in fact not slice
and one knot is smoothly slice. Therefore among the knots with up to twelve crossings only the sliceness status of the knot $12_{a631}$ is unknown.
\item The concordance genus $g_c(K)$ of a knot $K$ is defined to be the minimal genus among all knots
concordant to $K$. Livingston \cite{Liv09} uses twisted Alexander polynomials to show that
the concordance genus of $10_{82}$ equals three, which is its ordinary genus.
\end{enumerate}

\subsection{Ribbon knots  and doubly slice knots}\label{section:ds}

A knot $K$ is called \emph{homotopy ribbon} if there exists a slice disk $D$ such that
$\pi_1(N_K)\to \pi_1(D^4\setminus \nu D)$ is surjective. Note that if a knot is ribbon, then it is also homotopy ribbon.
It is an open conjecture whether every knot that is slice is also homotopy ribbon.

Let $K\subset S^3$ be a knot. Recall that there exists a non--singular hermitian pairing
\[ \lambda:H_1(N_K;\mathbb{Z}[t^{\pm 1}])\times H_1(N_K;\mathbb{Z}[t^{\pm 1}])\to \mathbb{Q}(t)/\mathbb{Z}[t^{\pm 1}] \] which is referred to as the \emph{Blanchfield pairing}.
We say that $M\subset H_1(N_K;\mathbb{Z}[t^{\pm 1}])$ is a \mbox{metabolizer} for the Blanchfield pairing if $M=M^{\perp}$, i.e. if $M$ satisfies
\[ M=\{ x\in H_1(N_K;\mathbb{Z}[t^{\pm 1}]) \, |\, \lambda(x,y)=0 \in \mathbb{Q}(t)/\mathbb{Z}[t^{\pm 1}] \mbox{ for all }y\in M\}.\]
If $K$ is slice, then there exists a metabolizer for the Blanchfield pairing.
Also recall from the previous section that for any $k$ there exists a canonical isomorphism $H_1(N_K;\mathbb{Z}[t^{\pm 1}])/(t^k-1)=H_1(L_k;\mathbb{Z})$.
If $M$ is a metabolizer for the Blanchfield pairing, then for any $k$ we have that $M/(t^k-1)\subset H_1(N_K;\mathbb{Z}[t^{\pm 1}])/(t^k-1)=H_1(L_k;\mathbb{Z})$
is a metabolizer for the linking form $\lambda_k$.
We refer to \cite{Bl57}, \cite{Ke75}, \cite[Section~7]{Go78}, \cite{Let00} and \cite[Section~2.3]{Fr04} for more information.

The proof of Theorem \ref{thm:slice}  can now be modified along well established lines (cf. e.g. \cite[Theorem~8.3]{Fr04}) to prove the following theorem:

\begin{theorem}\label{thm:ribbon}
Let $K\subset S^3$ be a ribbon knot, then there exists a metabolizer $M\subset H_1(N_K;\mathbb{Z}[t^{\pm 1}])$ for the Blanchfield pairing $\lambda$
such that for any $k$ and any non--trivial character $\chi:H_1(L_{k};\mathbb{Z})\to S^1$ of prime power vanishing on $M/(t^k-1)$ we have
\[ \Delta_{N_K}^{\alpha(k,\chi)}\doteq f(t)\overline{f(t)} \]
for some $f(t)\in Q(\mathbb{Z}[\zeta_q])[t^{\pm 1}]=\mathbb{Q}(\zeta_q)[t^{\pm 1}]$.
\end{theorem}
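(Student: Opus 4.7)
The plan is to parallel the proof of Theorem \ref{thm:slice} by replacing the $k$--dependent metabolizers $M_k$ of the individual linking forms $\lambda_k$ with a single global metabolizer $M$ of the Blanchfield pairing that arises canonically from a ribbon disk. The extra strength of the ribbon hypothesis, namely that the inclusion of the zero--surgery into the disk complement induces a surjection on $\pi_1$, is precisely what is needed to make one such $M$ work uniformly for every $k$.

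First I would fix a ribbon disk $D\subset D^4$ for $K$ and set $W\mathrel{\mathop:}= D^4\setminus \nu D$, so that $\partial W=N_K$. The (homotopy) ribbon condition gives a surjection $i_*\colon \pi_1(N_K)\twoheadrightarrow \pi_1(W)$. A standard argument of Kearton, used by Letsche \cite{Let00} and carried out in \cite[Theorem~8.3]{Fr04}, then shows that
\[ M\mathrel{\mathop:}= \ker\!\left(H_1(N_K;\mathbb{Z}[t^{\pm 1}])\to H_1(W;\mathbb{Z}[t^{\pm 1}])\right) \]
is a metabolizer for the Blanchfield pairing $\lambda$. Using the naturality of the canonical isomorphism $H_1(N_K;\mathbb{Z}[t^{\pm 1}])/(t^k-1)\cong H_1(L_k;\mathbb{Z})$, the image $M/(t^k-1)$ is automatically a metabolizer of $\lambda_k$ for every $k$, not merely for prime powers.

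Next, given a non--trivial character $\chi\colon H_1(L_k;\mathbb{Z})\to \mathbb{Z}/q\to S^1$ of prime power order that vanishes on $M/(t^k-1)$, the composite defining $\alpha(k,\chi)\colon \pi_1(N_K)\to \mbox{GL}(k,\mathbb{Z}[\zeta_q][t^{\pm 1}])$ kills $\ker i_*$, hence factors through $i_*$ to give an extension $\tilde{\alpha}\colon \pi_1(W)\to \mbox{GL}(k,\mathbb{Z}[\zeta_q][t^{\pm 1}])$. With this extension in hand, the Kirk--Livingston Poincar\'e duality argument for Reidemeister torsion \cite[Corollary~5.3]{KL99a} applied to the compact oriented $4$--manifold $W$ with boundary $N_K$ yields
\[ \tau(N_K,\alpha(k,\chi))\doteq f(t)\overline{f(t)} \]
for some $f(t)\in Q(\mathbb{Z}[\zeta_q])(t)$. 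Propositions \ref{prop:deltataualpha} and \ref{prop:deltatau}, together with Lemma \ref{lem:zerosurgery} to pass between the zero--surgery and the knot exterior (the factor $\det(\alpha(k,\chi)(\mu)-\mbox{id})$ that appears is itself of the form $g(t)\overline{g(t)}$ up to a unit, since $\alpha(k,\chi)$ is unitary), then translate this into the claimed norm--form factorisation of $\Delta_{N_K}^{\alpha(k,\chi)}$.

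The main obstacle I expect is verifying that $H_*(W;\mathbb{Z}[\zeta_q][t^{\pm 1}]^k)$ is $\mathbb{Z}[\zeta_q][t^{\pm 1}]$--torsion, since this acyclicity over the quotient field is what makes the duality argument go through. This is exactly where the ribbon hypothesis is used in an essential way: the surjectivity of $i_*$ combined with the vanishing of $\chi$ on $M$ forces the relevant twisted $H_0$ and $H_1$ of $W$ to be torsion by a half--lives--half--dies argument against $\lambda$. In the purely slice setting no single $M$ can be made to accomplish this simultaneously at all levels $k$, which is why Theorem \ref{thm:slice} must settle for prime--power metabolizers chosen one cover at a time, whereas here a single Blanchfield metabolizer does the whole job.
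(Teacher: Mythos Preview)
Your outline is correct and matches the approach the paper indicates (it gives no detailed proof, only the remark that Theorem~\ref{thm:slice} is to be modified ``along well established lines'' as in \cite[Theorem~8.3]{Fr04}): take the ribbon disk complement $W$, use surjectivity of $\pi_1(N_K)\to\pi_1(W)$ to produce the single Blanchfield metabolizer $M=\ker\big(H_1(N_K;\Z[t^{\pm1}])\to H_1(W;\Z[t^{\pm1}])\big)$, show that any $\alpha(k,\chi)$ with $\chi$ vanishing on $M/(t^k-1)$ factors through $\pi_1(W)$, and then run the Kirk--Livingston duality argument.

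Two small points. First, the appeal to Lemma~\ref{lem:zerosurgery} is unnecessary here since both the hypothesis and conclusion are already phrased on $N_K$; you only need Proposition~\ref{prop:deltataualpha} together with the observation that $\Delta_{N_K,0}^{\alpha(k,\chi)}\cdot\Delta_{N_K,2}^{\alpha(k,\chi)}$ is a norm up to units (this is where \cite[Lemma~6.4]{KL99a} enters). Second, your final paragraph slightly overstates the role of the ribbon hypothesis in the torsion verification: surjectivity of $i_*$ is what makes the \emph{extension} of $\alpha(k,\chi)$ to $\pi_1(W)$ work for arbitrary $k$, but establishing that $H_*(W;\Q(\zeta_q)(t)^k)=0$ still uses that $\chi$ has prime power order, via the standard Casson--Gordon transfer argument. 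This is why the theorem retains the prime-power condition on $\chi$ even though it drops it on $k$.
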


Using \cite[Proposition~4.6]{Fr04}  one can show that if a knot $K$ satisfies the ribbon obstruction of Theorem \ref{thm:ribbon}, then it also satisfies the
sliceness obstruction given by Theorem \ref{thm:slicetensor}.

A knot $K\subset S^3$ is called \emph{doubly slice} if there exists an unknotted locally flat
two--sphere $S
\subset S^4$ such that
$S\cap S^3=K$.
Note that a doubly slice knot is in particular slice.
The ordinary Alexander polynomial does not contain enough information to distinguish between slice and doubly slice knots.
On the other hand twisted Alexander polynomials can detect the difference.
The following theorem is well--known to the experts. It can be proved using the above ideas
of Kirk and Livingston combined with the results of Gilmer and Livingston \cite{GL83} (cf. also \cite[Section~8.2]{Fr04}).
Note that many of the ideas already go back to the original paper by Sumners \cite{Sum71}.

\begin{theorem} \label{thm:doublyslice}
Let $K$ be a doubly slice knot. Then for any prime power $k$ there exist
two metabolizers $M_1$ and $M_2$ of $\lambda_k$ with $M_1\cap M_2=\{0\}$
such that for any odd prime power $n$ and any  character $\chi:H_1(L_k;\mathbb{Z})\to \mathbb{Z}/n\to S^1$ which vanishes either on $M_1$ or on $M_2$
we have
\[ \Delta_{N_K}^{\alpha(k,\chi)}\doteq f(t)\overline{f(t)} \]
for some $f(t)\in Q(\mathbb{Z}[\zeta_n])[t^{\pm 1}]$.
\end{theorem}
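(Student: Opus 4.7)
The plan is to mimic the proof of Theorem \ref{thm:slice} twice, once for each half of the doubly slicing sphere, and then invoke the Sumners--Gilmer--Livingston splitting principle for linking forms of doubly slice knots to produce the two metabolizers with trivial intersection. Concretely, write $S^4=B_1^4\cup_{S^3}B_2^4$ and $S=D_1\cup_K D_2$ with $D_i\subset B_i^4$. Each $D_i$ is a locally flat slice disk for $K$, and $B_i^4\setminus \nu D_i$ is a 4--manifold with boundary $N_K$ whose first integer homology is generated by a meridian of $D_i$.

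First, I would spell out the algebraic consequence of the unknotted two--sphere. Let $Y_k$ denote the $k$--fold cover of $S^4$ branched along the unknotted sphere $S$, and let $W_k^{(i)}$ denote the $k$--fold cover of $B_i^4$ branched along $D_i$. Since $S$ is unknotted, $Y_k$ is homeomorphic to $S^4$ and so has vanishing $H_1$. The decomposition $Y_k=W_k^{(1)}\cup_{W_k}W_k^{(2)}$ together with the Mayer--Vietoris sequence
\begin{equation*}
H_1(W_k)\longrightarrow H_1(W_k^{(1)})\oplus H_1(W_k^{(2)})\longrightarrow H_1(Y_k)=0
\end{equation*}
then forces the two kernels $M_i=\ker\bigl(H_1(W_k;\mathbb{Z})\to H_1(W_k^{(i)};\mathbb{Z})\bigr)$ to satisfy $M_1+M_2=H_1(W_k;\mathbb{Z})$. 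A rank count, using that each $M_i$ is already a metabolizer of $\lambda_k$ (so $|M_i|^2=|H_1(W_k;\mathbb{Z})|$), then yields $M_1\cap M_2=\{0\}$. This is the Sumners \cite{Sum71} / Gilmer--Livingston \cite{GL83} splitting principle, and it is the main algebraic input I would use.

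Second, given a character $\chi:H_1(W_k;\mathbb{Z})\to \mathbb{Z}/n\to S^1$ of odd prime power order vanishing on $M_i$, I would check that the unitary representation $\alpha(k,\chi)$ of $\pi_1(N_K)$ extends over $\pi_1(B_i^4\setminus \nu D_i)$. Here I need the standard Casson--Gordon/Letsche/Kirk--Livingston observation (used already in the proof of Theorem \ref{thm:slice}, cf. \cite{CG86,Let00,KL99a,Fr04}): a character factors through the slice disk complement precisely when it vanishes on the image of $H_1(W_k)\to H_1(W_k^{(i)})$ under the inclusion of boundary, and $\alpha(k,\chi)$ is built compatibly with this inclusion structure. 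Combined with the fact that $H_*(B_i^4\setminus \nu D_i;\mathbb{Z}[\zeta_n][t^{\pm 1}]^k)$ is $\mathbb{Z}[\zeta_n][t^{\pm 1}]$--torsion (which follows from a standard half--lives--half--dies argument for the cover corresponding to $\alpha(k,\chi)$, as in \cite[Section~5]{KL99a}), Poincar\'e duality for Reidemeister torsion then gives
\begin{equation*}
\tau(N_K,\alpha(k,\chi))\doteq f(t)\,\overline{f(t)}
\end{equation*}
for some $f(t)\in \mathbb{Q}(\zeta_n)(t)$. Via Proposition \ref{prop:deltataualpha} and Lemma \ref{lem:zerosurgery}, together with the short computation that $\det(\alpha(k,\chi)(\mu)-\mathrm{id})$ is of the desired norm form, this upgrades to the stated factorization of $\Delta_{N_K}^{\alpha(k,\chi)}$ in $\mathbb{Q}(\zeta_n)[t^{\pm 1}]$.

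The main obstacle, and the only point that is genuinely new compared with the slice case, is step one: verifying that the unknottedness of $S\subset S^4$ forces $M_1\cap M_2=\{0\}$ at the level of finite linking forms. I would handle this by running the Mayer--Vietoris argument above not just for $k$--fold branched covers but uniformly, in line with \cite{GL83}, so that the linking form $\lambda_k$ is shown to split as an orthogonal (in the hyperbolic sense) sum with $M_1$ and $M_2$ as complementary Lagrangians. Once this splitting is in place, the remainder of the argument is a mechanical repetition, on each side, of the proof of Theorem \ref{thm:slice}.
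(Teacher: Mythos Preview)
Your plan is exactly the route the paper points to: run the Kirk--Livingston slice argument on each of the two slice disks $D_1,D_2$ coming from the unknotted sphere, and use the Sumners/Gilmer--Livingston splitting of the branched--cover linking form to produce the two complementary metabolizers. One small correction: the portion of the Mayer--Vietoris sequence you displayed yields surjectivity onto $H_1(W_k^{(1)})\oplus H_1(W_k^{(2)})$, not $M_1+M_2=H_1(W_k)$; to get $M_1\cap M_2=0$ directly you should instead use the preceding term $H_2(Y_k)=H_2(S^4)=0$, which shows the map $H_1(W_k)\to H_1(W_k^{(1)})\oplus H_1(W_k^{(2)})$ is injective.
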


It is also possible to state and prove an analogue of Theorem \ref{thm:slicetensor} for doubly slice knots.


\subsection{Twisted invariants and slice links}\label{section:slicelinks} \label{section:linkslice}

Kawauchi \cite[Theorems~A~and~B]{Ka78} showed that if $L$ is a slice link with more than one component, then the ordinary Alexander module  is necessarily non--torsion,
in particular the corresponding Reidemeister torsion is zero.
We will follow an idea of Turaev \cite[Section~5.1]{Tu86} to define a (twisted) invariant for links even if the (twisted) Alexander module is non--torsion.
This invariant will then give rise to a sliceness obstruction for links. We refer throughout this section to \cite{CF10} and \cite{Tu86} for details.

Let $L\subset S^3$ be an oriented $m$--component link.
Let $R\subset \mathbb{C}$ be a subring and let $\alpha:\pi(S^3\setminus \nu L)\to \mbox{GL}(k,R)$ be a unitary representation. Suppose that
$\psi:\pi_1(S^3\setminus \nu L)\to F$ is a homomorphism to a free abelian group which is non--trivial on each meridian of $L$.
Under these assumptions      we can endow $H_1(S^3\setminus \nu L;Q(R[F])^k)$ and $H_2(S^3\setminus \nu L;Q(R[F])^k)$
with dual bases and using these bases we can define
 the
Reidemeister torsion
\[ \tilde{\tau}^{\alpha\otimes \psi}(L)\in Q(F)^*/N(Q(F)^*),\]
here  $N(Q(F)^*)$ denotes the subgroup of norms of the multiplicative group $Q(F)^*$, i.e.
 $N(Q(F)^*)=\{ q\overline{q} \, |\, q\in Q(F)^*\}$. Reidemeister torsion
$\tilde{\tau}^{\alpha\otimes \psi}(L)$ viewed as an element in $Q(F)^*/N(Q(F)^*)$ is well--defined up to multiplication
by an element of the form $\pm df$ where $d\in \det(\alpha(\pi_1(S^3\setminus \nu L))), f\in F$.
The invariant $\tau^{\alpha\otimes \psi}(L)$ is the twisted version of an invariant first introduced by Turaev \cite[Section~5.1]{Tu86}.

For example, if $L$ is the $m$--component unlink in $S^3$ with meridians $\mu_1,\dots,\mu_m$,
then given $\alpha$ and $\psi$ as above we have
\[ \tilde{\tau}^{\alpha\otimes \psi}(L)=\pm df \cdot \prod\limits_{i=1}^m \det\big(\mbox{id}-\psi(\mu_i)\alpha(\mu_i)\big)^{-1} \, \in Q(F)^*/N(Q(F)^*)\]
with $d\in \det(\alpha(\pi_1(S^3\setminus \nu L))), f\in F$.


In \cite{CF10} the first author and Jae Choon Cha prove the following result.

\begin{proposition}\label{prop:slicelink}
Let $L$ be an $m$--component oriented slice link with meridians $\mu_1,\dots,\mu_m$.
 Let $R\subset \mathbb{C}$ be a subring closed under complex conjugation and let $\alpha:\pi_1(S^3\setminus \nu L)\to \mbox{GL}(k,R)$ be a representation which factors through a finite group of prime power order. Let
$\psi:H_1(S^3\setminus \nu L)\to F$ be an epimorphism onto a free abelian group which is non--trivial on each meridian of $L$.  Then
\[ \tilde{\tau}^{\alpha\otimes \psi}(L)=\pm
 df\cdot \prod\limits_{i=1}^m \det\big(\mbox{id}-\psi(\mu_i)\alpha(\mu_i)\big)^{-1} \, \in Q(F)^*/ N(Q(F))^* \]
 for some $d\in \det(\alpha(\pi_1(S^3\setminus \nu L)))$ and $f\in F$.
\end{proposition}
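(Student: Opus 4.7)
The plan is to mimic the knot-theoretic proof of the Fox--Milnor-type obstruction of Theorem \ref{thm:slice}, in three steps. First, I will relate the normalized torsion $\tilde{\tau}^{\alpha\otimes\psi}(L)$ on the link exterior $S^3\setminus \nu L$ to the (ordinary) twisted Reidemeister torsion of the zero-framed surgery $N_L$; the difference is exactly the product of the meridional factors on the right-hand side. Second, I will use the slice disks to argue that the representation $\alpha\otimes\psi$ extends over the exterior $W = D^4\setminus \nu D$ of the slice disks $D=D_1\cup \dots\cup D_m$, with vanishing twisted second homology. Third, I will invoke Poincar\'e--Reidemeister duality to conclude that $\tau(N_L,\alpha\otimes\psi)$ is a norm, which finishes the proof.

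For the first step, the union of solid tori glued to $S^3\setminus \nu L$ to form $N_L$ is $\cup_i S^1\times D^2$, and each piece contributes the factor $\det(\mathrm{id}-\psi(\mu_i)\alpha(\mu_i))^{-1}$, while the tori on the boundary contribute trivially. This is the obvious $m$-component generalization of Lemma \ref{lem:zerosurgery}; the only wrinkle is that because the twisted Alexander module on a multi-component link exterior is generally non-torsion (by Kawauchi \cite{Ka78}), one must work with Turaev's normalized torsion $\tilde\tau$ and verify that the Mayer--Vietoris computation of \cite{Tu86} goes through. The output is the identity
\[ \tilde\tau^{\alpha\otimes\psi}(L) \doteq \tau(N_L,\alpha\otimes\psi)\cdot \prod_{i=1}^m \det\bigl(\mathrm{id}-\psi(\mu_i)\alpha(\mu_i)\bigr)^{-1} \]
in $Q(F)^*/N(Q(F)^*)$, up to the stated indeterminacy.

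For the second step, $\psi$ extends over $\pi_1(W)$ automatically since the inclusion $N_L\hookrightarrow W$ induces an isomorphism on $H_1$ (with $\mathbb{Z}$-coefficients): both sides are freely generated by the classes of the meridians $\mu_i$. The extension of $\alpha$ is the hard part. Here the hypothesis that $\alpha$ factors through a finite group of prime power order is essential: one applies a Stallings--Strebel type lemma (in the form used by Cochran--Orr--Teichner \cite{COT03} and Cha in his work on the link case) to deduce that the map $\pi_1(N_L)\to\pi_1(W)$ induces an isomorphism of pro-$p$ completions, and hence that any $p$-group-valued representation of $\pi_1(N_L)$ extends uniquely through $\pi_1(W)$. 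A standard Alexander-duality argument in $D^4$ then shows $H_2(W,N_L;R[F]^k)=0$, so that the twisted module $H_2(W;R[F]^k)$ is $R[F]$-torsion.

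For the third step, Poincar\'e--Reidemeister duality for the pair $(W,\partial W)$ with coefficients in $R[F]^k$, applied as in \cite[Theorem~6.1]{KL99a} and using that $\alpha$ is unitary and $R$ is closed under complex conjugation, forces $\tau(N_L,\alpha\otimes\psi)$ to be of the form $q\overline{q}$ modulo units $\pm df$ with $d\in\det(\alpha(\pi_1))$ and $f\in F$; that is, it is trivial in $Q(F)^*/N(Q(F)^*)$. Substituting this into the identity of the first step yields the stated formula. The main obstacle is the second step: one must carefully verify that the prime-power hypothesis on $\alpha$ really does guarantee extension over the slice disk complement (since such complements are generally not homotopy-ribbon, so the $\pi_1$-level extension fails), and that all indeterminacies from the Mayer--Vietoris sequence and the duality pairing combine to give exactly the claimed ambiguity $\pm df$.
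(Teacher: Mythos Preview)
Your proposal has a genuine gap at step 1, which then undermines step 3. For a multi-component link ($m\geq 2$) the twisted chain complex of the zero-surgery $N_L$ over $Q(R[F])$ is \emph{not} acyclic, so the ordinary Reidemeister torsion $\tau(N_L,\alpha\otimes\psi)$ vanishes and your Mayer--Vietoris identity is vacuous. Already for the trivial representation and $L$ the $m$-component unlink one has $N_L=\#_m(S^1\times S^2)$; a direct computation with the obvious handle decomposition gives $\partial_2=0$ and hence $H_1(N_L;Q(F))\cong Q(F)^{\,m-1}\neq 0$. The same obstruction reappears in step 3: the slice-disk exterior $W=D^4\setminus\nu D$ has $\chi(W)=1-m\neq 0$, so its twisted homology over $Q(R[F])$ cannot all vanish, and the Kirk--Livingston duality argument from the knot case does not apply to produce a norm. (Your appeal to ``Alexander duality'' to get $H_2(W,N_L;R[F]^k)=0$ is also not justified; Alexander duality does not see the twisted coefficients.)

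This is precisely why Turaev's normalized torsion $\tilde\tau$, with its carefully chosen dual homology bases, is introduced: the non-acyclicity is a feature of the multi-component situation that cannot be removed by passing to the closed manifold $N_L$. The paper does not give its own proof but refers to \cite{CF10} and notes that the argument there follows Turaev's proof of the untwisted case \cite[Theorem~5.4.2]{Tu86}. That approach works directly with the link exterior and the concordance (or slice-disk) exterior, carrying the normalized torsion $\tilde\tau$ through a duality argument adapted to the non-acyclic setting, rather than reducing to an ordinary torsion of a closed $3$-manifold. Your Stallings/pro-$p$ step for extending $\alpha$ is the right idea and is indeed the crux of the prime-power hypothesis, but it needs to be combined with the $\tilde\tau$ formalism, not with $\tau(N_L)$.
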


If $\alpha$ is the trivial representation over $\mathbb{Z}$, then it is shown in \cite[Theorem~5.1.1]{Tu86} that the torsion
is represented by the untwisted Alexander polynomial of $L$ corresponding to $\psi$.
Using this observation we see that Proposition \ref{prop:slicelink} generalizes earlier
results of Murasugi \cite{Mu67}, Kawauchi \cite{Ka77,Ka78} and Nakagawa \cite{Na78}. The approach taken in \cite{CF10}
is partly inspired by  Turaev's proof of the untwisted case (cf. \cite[Theorem~5.4.2]{Tu86}).

In \cite{CF10} we will in particular use the obstruction of Proposition \ref{prop:slicelink} to reprove that the Bing double of the Figure 8 knot is not slice.
This had first been shown by Cha \cite{Ch09}.

\section{Twisted Alexander polynomials, the Thurston norm and fibered manifolds}  \label{section:fibgenus}

\subsection{Twisted Alexander polynomials and fibered manifolds}\label{section:fib}
Let $\phi\in H^1(N;\mathbb{Z}) = \mbox{Hom}(\pi_1(N),\mathbb{Z})$ be non--trivial. We say
\emph{$(N,\phi)$ fibers over $S^{1}$}
 if there exists
 a fibration $p:N\to S^1$ such that the induced map $p_*:\pi_1(N)\to \pi_1(S^1)=\mathbb{Z}$ coincides with $\phi$. If
$K$ is a fibered knot, then it is a classical result of Neuwirth that
$\Delta_K$ is monic and that $\deg(\Delta_K)=2 \, \mbox{genus}(K)$. Similarly, twisted Alexander polynomials provide necessary conditions to the fiberability of a pair $(N,\phi)$.

In order to state the fibering obstructions for a pair $(N,\phi)$
we need to introduce the Thurston norm. Given $(N,\phi)$ the \emph{Thurston norm} of $\phi$ is defined as
 \[
||\phi||_{T}=\min \{ \chi_-(S)\, | \, S \subset N \mbox{ properly embedded surface dual to }\phi\}.
\]
Here, given a surface $S$ with connected components $S_1\cup\dots \cup S_k$, we define
$\chi_-(S)=\sum_{i=1}^k \max\{-\chi(S_i),0\}$.
 Thurston
\cite{Th86} showed that this defines a seminorm on $H^1(N;\mathbb{Z})$ which can be extended to a seminorm
on $H^1(N;\mathbb{R} )$. As an example consider $S^3\setminus \nu K$, where $K\subset S^3$ is a non--trivial knot. Let $\phi\in H^1(S^3\setminus \nu K;\mathbb{Z})$ be a
generator, then  $||\phi||_T=2\, \mbox{genus}(K)-1$.

Let $N$ be a 3--manifold $N$ with empty or toroidal boundary and let  $\phi\in H^1(N;\mathbb{Z})$.
Recall that we identify the group ring $R[\mathbb{Z}]$ with the Laurent polynomial ring $R[t^{\pm 1}]$ and we will now identify $Q(R[\mathbb{Z}])$ with the field of rational functions $Q(t)$.
Given a  representation
${\gamma}:\pi_1(N)\to \mbox{GL}(k,R)$ we therefore view the corresponding twisted Reidemeister torsion
$\tau(N,{\gamma \otimes \phi})$ as an element in $Q(t)$ and we view the corresponding twisted
Alexander polynomials
$\Delta_{N,i}^{\gamma \otimes \phi}$ as elements in $R[t^{\pm 1}]$.

We say that the twisted Reidemeister torsion $\tau(N,\gamma\otimes \phi)$ is \emph{monic} if there exist
 polynomials
$p(t),q(t)\in R[t^{\pm 1}]$ with $\frac{p(t)}{q(t)}\doteq \tau(N,\gamma\otimes \phi)$ such that the
top coefficients of $p(t)$ and $q(t)$  lie in
\[ \{ \pm \det(\gamma(g)) \, |\, g\in \pi_1(N)\}.\]
We also say that the twisted Alexander polynomial $\Delta_{N,i}^{\gamma \otimes \phi}$ is \emph{monic}
if one (and equivalently any) representative has a top coefficient which is a unit in $R$.


We recall the following basic lemma:

\begin{lemma}
Let $N$ be a 3--manifold with empty or toroidal boundary. Let $\phi\in H^1(N;\mathbb{Z})$ be non--trivial  and let
${\gamma }:\pi_1(N)\to \mbox{GL}(k,R)$ be a  representation with $R$ a Noetherian UFD. Then the following hold:
\begin{enumerate}
\item $\Delta_{N,0}^{\gamma \otimes \phi}$ is monic,
\item If $\Delta_{N,1}^{\gamma \otimes \phi}$ is non--zero, then $\Delta_{N,2}^{\gamma \otimes \phi}$ is monic.
\end{enumerate}
\end{lemma}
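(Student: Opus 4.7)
The plan is to read off (1) from an explicit cellular presentation of $H_0$, and to deduce (2) in the closed case from (1) via the duality of Proposition \ref{prop:dualitydelta} (the toroidal boundary case being immediate from Proposition \ref{prop:deltatau}(5)).

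For part (1), I would use a CW--structure on $N$ with one $0$--cell and $1$--cells labeled by generators $h_1,\ldots,h_n$ of $\pi_1(N)$, so that $A_1=(1-h_1,\ldots,1-h_n)$ as in Section \ref{section:comptau}. Then $H_0(N;R[t^{\pm 1}]^k)$ is the cokernel of $\partial_1\otimes 1$, presented by the block--row matrix whose $i$--th block is $I-\gamma(h_i)t^{\phi(h_i)}$. In particular $\Delta_{N,0}^{\gamma\otimes\phi}$ divides each of the single--block maximal minors $\det(I-\gamma(h_i)t^{\phi(h_i)})$. Since $\phi$ is non--trivial and the $h_i$ generate $\pi_1(N)$, some $h_i$ has $m:=\phi(h_i)\neq 0$; writing $A:=\gamma(h_i)\in\mbox{GL}(k,R)$, the Laurent polynomial
\[ \det(I-At^m)=1+\cdots+(-1)^k\det(A)\, t^{mk} \]
has both its bottom coefficient ($=1$) and its top coefficient ($=(-1)^k\det A$) in $R^\times$. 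Because $R[t^{\pm 1}]$ is a UFD, any divisor inherits this property. This proves (1) and yields the slightly stronger conclusion that the bottom coefficient of $\Delta_{N,0}^{\gamma\otimes\phi}$ is also a unit in $R$.

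For part (2), I would split according to the boundary. If $N$ has non--empty toroidal boundary, Proposition \ref{prop:deltatau}(5) immediately gives $\Delta_{N,2}^{\gamma\otimes\phi}\doteq 1$, which is monic. If $N$ is closed, then the hypothesis $\Delta_{N,1}^{\gamma\otimes\phi}\neq 0$, combined with Proposition \ref{prop:deltatau}(1) and (4), ensures that $\Delta_{N,i}^{\gamma\otimes\phi}\neq 0$ for $i=0,1,2$, so Proposition \ref{prop:dualitydelta} applies and gives
\[ \Delta_{N,2}^{\gamma\otimes\phi}\doteq \overline{\Delta_{N,0}^{\overline{\gamma\otimes\phi}}}. \]
A direct computation shows $\overline{\gamma\otimes\phi}=\overline{\gamma}\otimes\phi$, where $\overline{\gamma}(g):=\overline{(\gamma(g)^{-1})^t}$ is again a representation into $\mbox{GL}(k,R)$. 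Applying the strengthened form of (1) to $\overline{\gamma}$, both extreme coefficients of $\Delta_{N,0}^{\overline{\gamma}\otimes\phi}$ are units in $R$; since the involution $t\mapsto t^{-1}$ interchanges top and bottom coefficients, the top coefficient of $\Delta_{N,2}^{\gamma\otimes\phi}$ is a unit, i.e.\ $\Delta_{N,2}^{\gamma\otimes\phi}$ is monic.

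The one point to watch is that monicity of a Laurent polynomial is \emph{not} preserved by the involution $t\mapsto t^{-1}$; only the stronger property ``both extreme coefficients are units'' is. This forces me to establish slightly more in part (1) than is literally stated, but that extra content falls out of the same determinant calculation, so no real obstacle arises.
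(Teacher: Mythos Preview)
Your proof is correct and follows the same strategy as the paper: an explicit presentation of $H_0$ for part (1), and Proposition \ref{prop:dualitydelta} for part (2). In fact you are more careful than the paper in two places. First, you explicitly split (2) into the boundary case (handled by Proposition \ref{prop:deltatau}(5)) and the closed case (handled by Proposition \ref{prop:dualitydelta}); the paper's one-line appeal to Proposition \ref{prop:dualitydelta} is literally only stated for closed $N$. Second, your observation that the involution $t\mapsto t^{-1}$ preserves ``both extreme coefficients are units'' but not ``top coefficient is a unit'' is a genuine point that the paper's terse proof leaves to the reader; your strengthened version of (1) is exactly what is needed to make the duality argument go through cleanly.
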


\begin{proof}
As in Section \ref{section:comptau} we pick a cell decomposition of $N$ with one 0--cell $x_0$ and $n$ 1--cells $c_1,\dots,c_n$.
We denote the corresponding elements in $\pi_1(N,x_0)$ by $c_1,\dots,c_n$ as well.
Without loss of generality we can assume that $\phi(c_1)> 0$.
We then get a resolution for $H_0(N;R[t^{\pm 1}]^n)$ with presentation matrix
\[ \begin{pmatrix} \mbox{id}_n -({\gamma \otimes \phi})(c_1) &\dots & \mbox{id}_n -{(\gamma \otimes \phi)}(c_n)\end{pmatrix}. \]
We refer to \cite[Proof~of~Proposition~6.1]{FK06} for details.
We have
\[ \det(\mbox{id}_n-{(\gamma \otimes \phi)}(c_1))=\det(\mbox{id}_n-t^{\phi(c_1)}\gamma(c_1))\in R[t^{\pm 1}], \]
which is monic since the top coefficient equals $\det(\gamma(c_1))$. By definition
$\Delta_{N,0}^{\gamma \otimes \phi}$ divides $\det(\mbox{id}_n-({\gamma \otimes \phi})(c_1))$, we therefore see that $\Delta_{N,0}^{\gamma \otimes \phi}$ is monic.
The claim that $\Delta_{N,2}^{\gamma \otimes \phi}$ is monic now follows from Proposition \ref{prop:dualitydelta}.
\end{proof}

\begin{remark}
Note that if $\tau(N,{\gamma \otimes \phi})$ is monic, then it follows from the previous lemma and
from Proposition \ref{prop:taudelta} that $\Delta_{N,i}^{\gamma \otimes \phi}\in R[t^{\pm 1}]$ is monic for $i=0,1,2$.
Note though that the converse does not hold in general since  twisted Alexander polynomials have in general a higher indeterminacy than twisted Reidemeister torsion.
For example, let $\mathbb{F}$ be a field and let $\gamma:\pi_1(N)\to \mbox{GL}(k,\mathbb{F})$ be a representation such that  $\Delta_{N,i}^{\gamma\otimes \phi}\ne 0$. It follows immediately from the definition
that $\Delta_{N,i}^{\gamma\otimes \phi}$ is monic. However, $\tau(N,\gamma\otimes \psi)$
is not necessarily monic (cf. e.g. \cite[Example~4.2]{GKM05}).
\end{remark}

We can now formulate the following fibering obstruction theorem
which was proved in various levels of generality by
Cha \cite{Ch03}, Kitano and Morifuji \cite{KM05}, Goda, Kitano and Morifuji \cite{GKM05}, Pajitnov \cite{Pa07}, Kitayama \cite{Kiy08a} and  \cite{FK06}.

\begin{theorem} \label{thm:fibob}
Let $N$ be a 3--manifold. Let $\phi\in H^1(N;\mathbb{Z})$ be non--trivial such that  $(N,\phi)$ fibers over
$S^1$ and such that $N\ne S^1\times D^2, N\ne S^1\times S^2$. Let
${\gamma}:\pi_1(N)\to \mbox{GL}(k,R)$ be a  representation. Then  $\tau(N,{\gamma \otimes \phi}) \in Q(t)$ is monic
and we have
\[ k ||\phi||_T =\deg(\tau(N,{\gamma \otimes \phi})).\]
\end{theorem}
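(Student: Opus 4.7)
The plan is to realize $N$ as a mapping torus of a fiber surface homeomorphism and compute the twisted torsion via an algebraic mapping cone construction.

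First, using the hypothesis that $(N,\phi)$ fibers over $S^1$, I would write $N = M_h$ as the mapping torus of a homeomorphism $h\colon \Sigma \to \Sigma$. After reducing to the case where $\phi$ is primitive (both sides of the desired equality scale equally under $\phi\mapsto n\phi$), the fiber $\Sigma$ is connected; the hypothesis $N\ne S^1\times D^2, S^1\times S^2$ guarantees that $\Sigma$ is neither a disk nor a sphere, so $\chi(\Sigma)\le 0$. By Thurston's theorem \cite{Th86}, the fiber realizes the Thurston norm: $\|\phi\|_T = \chi_-(\Sigma) = -\chi(\Sigma)$.

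Next, I would pick a finite CW structure on $\Sigma$ with $c_i$ cells in dimension $i$. The mapping torus description endows $N$ with a CW structure having $c_i$ cells in dimension $i$ (from $\Sigma\times\{0\}$) and $c_i$ cells in dimension $i+1$ (from $\Sigma\times(0,1)$). Using compatible lifts of cells to the universal cover and the splitting $\pi_1(N) \cong \pi_1(\Sigma)\rtimes\mathbb{Z}$ determined by $\phi$ and a lift $t\in\pi_1(N)$ of a generator, the twisted chain complex $C_*(\tilde N)\otimes_{\mathbb{Z}[\pi_1(N)]} Q(t)^k$ is identified with the algebraic mapping cone of a chain map
\[
f_\# = t\cdot \tilde h_\# - \mathrm{Id}\colon C_*(\Sigma;Q(t)^k) \longrightarrow C_*(\Sigma;Q(t)^k),
\]
where $\tilde h_\#$ is the twisted monodromy chain map, incorporating a fixed lift of the geometric monodromy together with multiplication by $\gamma(t)$.

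On each $C_i(\Sigma;Q(t)^k)$, a free $Q(t)$-module of rank $kc_i$, the block $f_{\#,i}$ has determinant a polynomial in $t$ of degree $kc_i$ with nonzero constant term $(-1)^{kc_i}$ and leading coefficient $\det(\tilde h_{\#,i})$ lying in $\{\pm\det(\gamma(g)) : g\in \pi_1(N)\}$. In particular each $f_{\#,i}$ is a $Q(t)$-isomorphism, the mapping cone is acyclic, and the standard torsion-of-mapping-cone formula yields
\[
\tau(N,\gamma\otimes\phi) \doteq \prod_i \det(t\tilde h_{\#,i} - \mathrm{Id})^{(-1)^{i+1}} \in Q(t)^*.
\]
The leading and trailing coefficients of both the numerator and the denominator of this rational function lie in the allowed indeterminacy set $\{\pm\det(\gamma(g))f : g\in\pi_1(N), f\in F\}$, which is exactly the definition of monicity. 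For the degree,
\[
\deg\tau(N,\gamma\otimes\phi) = \sum_i(-1)^{i+1} kc_i = -k\chi(\Sigma) = k\|\phi\|_T.
\]

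The main obstacle will be the bookkeeping needed to identify the twisted chain complex of $\tilde N$ with the algebraic mapping cone: one has to choose compatible lifts of cells, verify that $\tilde h_\#$ descends to a well-defined chain map on $C_*(\Sigma;R^k)$ once the restriction $\gamma|_{\pi_1(\Sigma)}$ is combined with the action of $\gamma(t)$, and track the sign conventions in the boundary of the mapping cone. Once this identification is established, both the monicity claim and the degree computation follow by inspection from the block-triangular structure of the boundary map.
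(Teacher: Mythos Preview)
Your proposal is correct and follows essentially the same route as the paper: both arguments endow the fiber $\Sigma$ with a CW structure for which the monodromy is cellular, take the induced CW structure on the mapping torus, and read off monicity and the degree $-k\chi(\Sigma)=k\|\phi\|_T$ from the resulting boundary maps. The only cosmetic difference is that the paper packages the final computation as an application of \cite[Theorem~2.2]{Tu01}, whereas you phrase it via the torsion-of-mapping-cone formula $\tau\doteq\prod_i\det(t\tilde h_{\#,i}-\mathrm{Id})^{(-1)^{i+1}}$; these are the same calculation.
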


\begin{remark}
\begin{enumerate}
\item If $R$ is a Noetherian UFD, then the last equality can be rewritten as
\[ k ||\phi||_T =\deg \Delta_{N,1}^{{\gamma \otimes \phi}}-\deg \Delta_{N,0}^{{\gamma \otimes \phi}}-\deg \Delta_{N,2}^{{\gamma \otimes \phi}}.\]
\item Recall that an alternating knot is fibered if and only if its ordinary Alexander polynomial is monic. In contrast to this classical result it follows from calculations by Goda and Morifuji \cite{GM03} (cf. also \cite{Mo08})
that there exists an alternating knot such that a twisted Reidemeister torsion is monic, but which is not fibered.
\item Theorem \ref{thm:fibob} has been generalized by Silver and Williams \cite{SW09d} to give an obstruction for a general group to admit an epimorphism onto $\Z$ such that the kernel is a finitely generated free group.
\end{enumerate}
\end{remark}

\begin{proof}
The condition on the degrees is proved in \cite[Theorem~1.3]{FK06} for $R$ a Noetherian UFD.
The monicness of twisted Reidemeister torsion was proved by
Goda, Kitano and Morifuji \cite{GKM05} in the case of a knot complement.
The monicness of $\Delta_{N,1}^{\gamma \otimes \phi}$ was proved in \cite[Theorem~1.3]{FK06}.
The general case of Theorem \ref{thm:fibob}  can be obtained by a direct calculation as follows.
Let $S$ be the fiber and $f:S\to S$ the monodromy. We endow $S$ with a CW--structure such that $f$ is a cellular map.
Denote by $D_i$ the set of $i$--cells of $S$ and denote by $n_i$ the number of $i$--cells.
We can then endow $N=(S\times [0,1])/(x,0)\sim (f(x),1)$ with a CW--structure where the $i$--cells are given by
$D_i$ and $E_i\mathrel{\mathop:}=\{ c\times (0,1) \, | \, c\in D_{i-1}\}$. A direct calculation using \cite[Theorem~2.2]{Tu01} now shows
that
 $\tau(N,{\gamma \otimes \phi}) \in Q(t)$ is monic
and that
\[ \deg(\tau(N,{\gamma \otimes \phi}))=-k\chi(S)=k ||\phi||_T.\]
\end{proof}

The calculations in \cite{Ch03}, \cite{GKM05} and \cite{FK06} gave evidence that
twisted Alexander polynomials are very successful at detecting non--fibered manifolds.
The results of Morifuji \cite[p.~452]{Mo08} also give evidence to the conjecture that the twisted Alexander polynomial corresponding to a `generic' representation
detect fiberedness.

Using a deep result of Agol \cite{Ag08} the authors proved in \cite{FV08c} (see also \cite{FV10} for an outline of the proof)
the following converse to Theorem \ref{thm:fibob}.

\begin{theorem} \label{thm:fv08} Let $N$ be a $3$--manifold with empty or toroidal boundary.
 Let $\phi \in H^{1}(N;\mathbb{Z})$ a nontrivial class. If for
any epimorphism $\gamma:\pi_1(N)\to G$ onto a finite group
the twisted Alexander polynomial $\Delta_{N}^{\gamma\otimes \phi}\in \mathbb{Z}[t^{\pm 1}]$ is monic
and  \[ k||\phi||_T=\deg(\tau(N,\gamma\otimes \phi))\] holds,
then $(N,\phi)$ fibers over $S^1$.
 \end{theorem}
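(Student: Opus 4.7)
The plan is to combine Agol's virtual fibering theorem with Shapiro's lemma for twisted invariants (Theorem \ref{thm:shapiro}) and Stallings' fibering criterion. After disposing of the excluded manifolds (for which the statement holds trivially) and reducing to the case in which $N$ is irreducible, one proceeds as follows.

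First I would apply Agol's theorem to produce a finite regular cover $p: \tilde N \to N$, associated to an epimorphism $\gamma_0: \pi_1(N) \to G_0$ onto a finite group, such that the pullback class $\tilde \phi := p^* \phi \in H^1(\tilde N; \mathbb{Z})$ lies in the closure of a fibered face of the Thurston norm ball of $\tilde N$. By Shapiro's lemma, $\tau(N,\gamma_0 \otimes \phi)$ agrees up to units with the untwisted Reidemeister torsion $\tau(\tilde N, \tilde \phi)$ of the infinite cyclic cover of $\tilde N$ determined by $\tilde \phi$. Combined with Gabai's theorem that the Thurston norm is multiplicative under finite covers, the hypothesis applied to $\gamma_0$ translates into the statement that $\tau(\tilde N, \tilde \phi)$ is monic with $\deg(\tau(\tilde N, \tilde \phi)) = ||\tilde \phi||_T$.

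The crucial step is to promote the conclusion ``$\tilde \phi$ lies in the closure of a fibered face'' to ``$\tilde \phi$ lies in the interior of a fibered face'', which by Thurston's theorem is equivalent to $(\tilde N, \tilde \phi)$ fibering over $S^1$. This would be effected via the characterization of fibered classes through the ordinary Alexander polynomial (Stallings, Bieri--Neumann--Strebel, McMullen): a class in the closure of a fibered face sits in the interior precisely when the Alexander polynomial is monic and has degree equal to the Thurston norm, which is exactly the information provided by the previous step. Should a single application of Agol's theorem fail to locate $\tilde \phi$ in the interior of a fibered face directly, one would iterate by passing to a further finite cover (the hypothesis still applies) until $\tilde \phi$ does so.

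Having established that $(\tilde N, \tilde \phi)$ fibers, the descent to $(N, \phi)$ follows from Stallings' criterion: $\ker(\tilde \phi) \subset \pi_1(\tilde N)$ is finitely generated; since $\pi_1(\tilde N)$ is of finite index in $\pi_1(N)$ and $\ker(\tilde \phi) = \ker(\phi) \cap \pi_1(\tilde N)$, this subgroup is of finite index in $\ker(\phi)$, forcing $\ker(\phi)$ itself to be finitely generated, and a second application of Stallings yields that $(N,\phi)$ fibers. I expect the main obstacle to be the promotion step: extracting from the monicness and degree hypotheses that $\tilde \phi$ lies in the interior rather than merely on the boundary of a fibered face, since Agol's theorem alone guarantees only the closure, and one must carefully leverage the fact that the hypothesis is available for \emph{every} finite quotient of $\pi_1(N)$.
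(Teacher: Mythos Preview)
The paper does not supply a proof here; it cites \cite{FV08c} and \cite{FV10}, noting only that Agol's work \cite{Ag08} is an essential input. Your outline has the right ingredients --- Agol, Shapiro (Theorem~\ref{thm:shapiro}), Gabai's multiplicativity of the Thurston norm, and Stallings for the descent --- and the descent step is correct. But there are two genuine gaps. First, Agol's theorem in \cite{Ag08} requires $\pi_1(N)$ to be virtually RFRS; this was not known for general $3$--manifolds at the time of \cite{FV08c} (it follows only from much later work of Agol and Wise), so invoking it as a black box is anachronistic, though it can now be patched. Second, and more seriously, the promotion step you flag is not a theorem: there is no result asserting that a class in the closure of a fibered face lies in the interior once the ordinary Alexander polynomial is monic of the correct degree. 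Monicness plus correct degree is necessary but not sufficient for fibering, and none of Stallings, Bieri--Neumann--Strebel, or McMullen supply the converse you need, even under the closure assumption. Iterating Agol does not help either, since each application only lands you in a closure again; nothing forces termination in an interior.

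The actual argument in \cite{FV08c} avoids this issue altogether. One cuts $N$ along a connected Thurston--norm--minimizing surface $\Sigma$ dual to $\phi$ to obtain a sutured manifold $M$; the pair $(N,\phi)$ fibers if and only if $M\cong\Sigma\times I$. The hypothesis that all twisted Alexander polynomials are monic of the right degree translates into the statement that the inclusions $\Sigma_\pm\hookrightarrow M$ induce isomorphisms on $H_1$ with coefficients in every finite $\pi_1(N)$--representation, i.e.\ $M$ is a homology product in every finite cover. Agol's machinery (rather than the virtual fibering statement applied as a black box) is then used to show that this forces $M$ to be a genuine product. The key difference from your proposal is that the full strength of the hypothesis --- \emph{every} finite quotient, not just the single cover produced by one application of Agol --- is brought to bear directly on the sutured manifold, rather than on the location of a cohomology class in the Thurston norm ball.
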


\begin{remark}
Building on work of Taubes \cite{Ta94,Ta95}, Donaldson \cite{Do96} and Kronheimer \cite{Kr99} the authors
also show that Theorem \ref{thm:fv08} implies the following: If $N$ is a closed 3--manifold
and if $S^1\times N$ is symplectic, then $N$ fibers over $S^1$. This provides a converse to a theorem
of Thurston \cite{Th76}. We refer to \cite{FV06,FV08a,FV08b,FV08c} for details, and we refer to
Kutluhan--Taubes \cite{KT09}, Kronheimer--Mrowka \cite{KM08} and  Ni \cite{Ni08} for an alternative proof in the case that $b_1(N)=1$.
\end{remark}

It is natural to ask whether $(N,\phi)$ fibers if all twisted Alexander polynomials are monic.
An affirmative answer  would be of great interest in the study of symplectic structures of 4--manifolds with a free circle action (cf. \cite{FV07b}).
An equivalent question has also been raised as a conjecture by Goda and Pajitnov \cite[Conjecture~13.2]{GP05} in the study of Morse--Novikov numbers. We refer to
 \cite{GP05} and \cite{Pa07} for more information on the relationship between twisted Alexander polynomials, twisted Novikov homology and Morse--Novikov numbers.

Somewhat surprisingly, there is strong evidence to the following much weaker conjecture:
A pair $(N,\phi)$ fibers if all twisted Alexander polynomials are non--zero.
In fact the authors showed the following theorem
(cf. \cite[Theorem~1.3]{FV07a} and \cite[Theorem~1,~Proposition~4.6,~Corollary~5.6]{FV08b}).

\begin{theorem}\label{thm:zerodelta}
Let $N$ be a 3--manifold with empty or toroidal boundary and $\phi \in H^1(N;\mathbb{Z})$ non--trivial.
Suppose that $\Delta_{N}^{\gamma\otimes\phi}$ is non--zero for any epimorphism $\gamma:\pi_1(N)\to G$ onto a finite group.
Furthermore suppose that one of the following holds:
\begin{enumerate}
\item $N=S^3\setminus \nu K$ and $K$ is a genus one knot,
\item $||\phi||_T=0$,
\item $N$ is a graph manifold,
\item $\phi$ is dual to a  connected incompressible
surface $S$ such that $\pi_1(S)\subset \pi_1(N)$ is separable,
\end{enumerate}
then $(N,\phi)$ fibers over $S^1$.
\end{theorem}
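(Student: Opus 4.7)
The overall strategy is to apply Stallings' fibration theorem, which characterizes fiberedness of $(N,\phi)$ by the finite generation of $\ker(\phi_*)$, where $\phi_*\colon\pi_1(N)\to\mathbb{Z}$ is the induced map; equivalently, if $S$ is an incompressible surface dual to $\phi$, fibering is equivalent to $N\setminus S$ being homeomorphic to $S\times[0,1]$. The bridge from the non--vanishing hypothesis to topology is Shapiro's lemma (Theorem~\ref{thm:shapiro}): for every epimorphism $\gamma\colon\pi_1(N)\to G$ onto a finite group, the ordinary Alexander polynomial of the associated finite cover $\hat N$ with respect to the pullback $\hat\phi$ is non--zero, so the ordinary Alexander module of $(\hat N,\hat\phi)$ is torsion. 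Combined with McMullen's Alexander norm inequality $\|\hat\phi\|_A\leq\|\hat\phi\|_T$, this sharply constrains the topology of the cover and, via a finite--index argument, of $N$ itself.

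Cases (1) and (2) reduce to direct calculations. When $\|\phi\|_T=0$, after splitting along essential spheres and discarding irreducible summands on which $\phi$ vanishes, $\phi$ is dual to a disjoint union of tori and annuli in an irreducible piece, and fiberedness can be read off from the resulting Seifert--fibered or torus--bundle structure. For a genus one knot $K$ one has $\|\phi\|_T=1$ and $\Delta_K(t)=at-(2a+1)+at^{-1}$; non--fiberedness is equivalent to $|a|>1$, and in that regime a classical construction produces an epimorphism $\gamma$ of $\pi_1(S^3\setminus\nu K)$ onto a metacyclic quotient whose associated twisted Alexander polynomial $\Delta_K^{\gamma\otimes\phi}$ vanishes, contradicting the hypothesis.

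Case~(3) is handled piecewise along the JSJ decomposition of $N$. On each Seifert fibered piece $P$, the restriction $\phi|_P$ is either trivial, vertical (a non--zero multiple of the regular fiber class in $H^1(P;\mathbb{Z})$), or horizontal. A vertical, non--zero restriction would admit a finite cyclic cover, built from wrapping the fiber direction, whose induced Alexander polynomial is identically zero, violating the hypothesis through Shapiro's lemma. Hence $\phi$ is horizontal or trivial on every piece, which is exactly the condition guaranteeing that $(N,\phi)$ fibers compatibly with the JSJ tori.

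Case~(4) is the principal obstacle and is where subgroup separability is essential. The separability of $\pi_1(S)\subset\pi_1(N)$ furnishes a finite cover $\hat N\to N$ in which the two images of $\pi_1(S)$ coming from the two sides of $S$ in $N\setminus S$ lift to distinct conjugates, so that $S$ lifts to an embedded incompressible surface $\hat S\subset\hat N$ whose complement decomposes cleanly. Shapiro's lemma gives that the ordinary Alexander polynomial of $(\hat N,\hat\phi)$ is non--zero; combining this with McMullen's Alexander norm inequality, the Thurston--norm minimality of $\hat S$, and a degree analysis, one upgrades non--vanishing to the equality $k\|\hat\phi\|_T=\deg\tau(\hat N,\hat\phi)$ with monic Reidemeister torsion, and then invokes Theorem~\ref{thm:fv08} to conclude that $(\hat N,\hat\phi)$ fibers. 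Since $\hat N\to N$ is a finite cover, $\ker(\hat\phi_*)$ has finite index in $\ker(\phi_*)$, so finite generation ascends to $\pi_1(N)$ and Stallings' theorem delivers fiberedness of $(N,\phi)$. The main technical difficulty lies precisely here: engineering the separability--induced cover so that both the geometric hypothesis on $\hat S$ and the Alexander--norm equality needed to verify the hypotheses of Theorem~\ref{thm:fv08} can be established simultaneously.
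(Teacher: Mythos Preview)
The paper is a survey and does not itself prove this theorem; it only cites \cite{FV07a} and \cite{FV08b}. So there is no in--paper proof to compare against. That said, your proposal has a genuine gap, most seriously in case~(4).

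Your argument for case~(4) attempts to pass to a separability--induced finite cover $\hat N$, deduce from Shapiro's lemma that the \emph{ordinary} Alexander polynomial of $(\hat N,\hat\phi)$ is non--zero, and then ``upgrade'' this to monicness and the degree equality $k\|\hat\phi\|_T=\deg\tau(\hat N,\hat\phi)$ so as to invoke Theorem~\ref{thm:fv08}. This fails on two counts. First, non--vanishing of an Alexander polynomial does not imply monicness or the degree equality; McMullen's inequality is only a lower bound, and there is no mechanism in your sketch that forces equality. Second, Theorem~\ref{thm:fv08} requires monicness and the degree equality for \emph{every} epimorphism $\pi_1(\hat N)\to G$ onto a finite group, not just for the trivial representation; the hypothesis of the present theorem controls only epimorphisms of $\pi_1(N)$, and such epimorphisms of $\pi_1(\hat N)$ need not extend. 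Moreover, Theorem~\ref{thm:fv08} (from \cite{FV08c}) relies on Agol's criterion and postdates the papers \cite{FV07a,FV08b} in which the present theorem is proved, so invoking it here is both logically and historically backwards.

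The actual argument in \cite{FV08b} runs by contrapositive and does not pass through Theorem~\ref{thm:fv08}. One assumes $(N,\phi)$ is not fibered, takes a Thurston--norm--minimizing connected incompressible $S$ dual to $\phi$, and cuts $N$ along $S$. Non--fiberedness means the resulting sutured manifold is not a product $S\times I$, so one of the inclusion--induced maps $\pi_1(S_\pm)\to\pi_1(N\setminus S)$ fails to be surjective. Separability of $\pi_1(S)$ in $\pi_1(N)$ is then used to produce an epimorphism $\gamma\colon\pi_1(N)\to G$ onto a finite group that witnesses this failure, and a direct computation with the resulting twisted chain complex shows $\Delta_N^{\gamma\otimes\phi}=0$, contradicting the hypothesis. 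Cases~(2) and~(3) in \cite{FV08b} follow the same contrapositive template (find an epimorphism with vanishing twisted polynomial when $(N,\phi)$ is not fibered); your sketches for these cases are vague but closer in spirit. Your outline for case~(1) is essentially correct.
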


Here we say that a subgroup $A$ of a group $\pi$ is \emph{separable} if for any $g\in \pi\setminus A$ there exists an epimorphism ${\gamma}:\pi\to G$ onto a finite group $G$ such that ${\gamma }(g)\not\in {\gamma }(A)$.
It is conjectured (cf. \cite{Th82}) that given a hyperbolic 3--manifold $N$ any finitely generated subgroup
$A\subset \pi_1(N)$ is separable. In particular, if Thurston's conjecture is true, then Condition (4) of Theorem
\ref{thm:zerodelta} is satisfied for any hyperbolic $N$.

The following theorem of Silver--Williams \cite{SW09b} (cf. also \cite{SW09a}) gives an interesting criterion for a knot to have vanishing twisted Alexander polynomial.

\begin{theorem}\label{thm:sw}
Let $K\subset S^3$ a knot.
Then there exists an epimorphism ${\gamma }:\pi_1(S^3\setminus \nu K)\to G$ to a finite group such that $\Delta_{K}^{\gamma} =0$
if and only if the universal abelian cover of $S^3\setminus \nu K$ has uncountably many finite covers.
\end{theorem}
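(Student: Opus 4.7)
The plan is to apply Shapiro's lemma (Theorem~\ref{thm:shapiro}) to every epimorphism $\gamma:\pi_1(X)\twoheadrightarrow G$ onto a finite group, identifying $\Delta_K^\gamma$ with the ordinary Alexander polynomial of the corresponding finite cover $\hat X_\gamma$ of $X=S^3\setminus\nu K$. Writing $\psi:\pi_1(X)\twoheadrightarrow\Z$ for the abelianisation with meridian $\mu$, and letting $Y_\gamma$ be the infinite cyclic cover of $\hat X_\gamma$ with respect to $\psi|_{\pi_1(\hat X_\gamma)}$, one verifies that $Y_\gamma\to X^{ab}$ is a finite cover, and that $\Delta_K^\gamma=0$ is equivalent to the finitely generated $\Z[t^{\pm 1}]$-module $H_1(Y_\gamma;\Z)$ having positive rank.

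For the forward direction, assume $\Delta_K^\gamma=0$. Let $T$ be the $\Z[t^{\pm 1}]$-torsion submodule of $H_1(Y_\gamma;\Z)$ and set $F=H_1(Y_\gamma;\Z)/T$, a finitely generated torsion-free $\Z[t^{\pm 1}]$-module of rank $r\geq 1$ that embeds in $\Z[t^{\pm 1}]^r$ with $\Z[t^{\pm 1}]$-torsion cokernel $Q$. For all but finitely many primes $p$ the Tor term $\Tor_1^{\Z}(Q,\F_p)$ vanishes and $Q\otimes\F_p$ is finite-dimensional, so the exact sequence $0\to F\otimes\F_p\to\F_p[t^{\pm 1}]^r\to Q\otimes\F_p\to 0$ shows that $H_1(Y_\gamma;\F_p)$ is countably infinite-dimensional over $\F_p$. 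Its $\F_p$-linear dual has cardinality $2^{\aleph_0}$, producing uncountably many index-$p$ normal subgroups of $\pi_1(Y_\gamma)$; composing with $Y_\gamma\to X^{ab}$ gives uncountably many pairwise distinct finite covers of $X^{ab}$.

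For the converse, I combine a counting argument on representation sets with a Shapiro-style construction. If $X^{ab}$ has uncountably many finite covers then, since only countably many finite groups arise up to isomorphism, some finite $\Sigma$ has $\Hom(\pi_1(X^{ab}),\Sigma)$ uncountable. The meridian $\mu$ induces a $\Z$-action on this set by conjugation, and any $\mu^d$-periodic representation extends to the finitely generated index-$d$ subgroup $\pi_1(X_d)=\ker(\psi\bmod d)\subset\pi_1(X)$, of which there are only countably many; summing over $d$, the periodic representations are countable, so there must exist $\rho$ with infinite $\mu$-orbit. From $\rho$ one assembles representations $\gamma_N:\pi_1(X)\to\Sigma\wr\mathrm{Sym}(N)$ encoding $N$ consecutive shifts of $\rho$, and Shapiro identifies $\Delta_K^{\gamma_N}$ with the order of $H_1(Y_{\gamma_N};\Z)$; the infinite $\mu$-orbit of $\rho$ forces this module to contain arbitrarily many $\Q[t^{\pm 1}]$-linearly independent classes as $N$ grows, so for $N$ sufficiently large it has positive $\Z[t^{\pm 1}]$-rank and $\Delta_K^{\gamma_N}=0$. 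The main obstacle lies in making this last growth step rigorous: the cleanest route identifies the limit Alexander module with a group ring over the infinite orbit of $\rho$, whose non-torsion character directly reflects the infinitude of the orbit.
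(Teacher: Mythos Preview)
First, note that the paper is a survey and does not actually prove Theorem~\ref{thm:sw}: it is stated as a result of Silver and Williams \cite{SW09b} (see also \cite{SW09a}) with no argument given. So there is no ``paper's own proof'' to compare against; what follows is an assessment of your proposal on its merits, with a word on how it relates to the Silver--Williams approach.

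Your forward direction is essentially correct. The identification via Theorem~\ref{thm:shapiro}, the passage to the finite cover $Y_\gamma\to X^{ab}$, and the production of uncountably many $\F_p$--covers from an infinite--dimensional $H_1(Y_\gamma;\F_p)$ all go through. The embedding $F\hookrightarrow \Z[t^{\pm 1}]^r$ with torsion cokernel, and the vanishing of the relevant $\Tor$ for almost all $p$, are routine once one observes that the annihilator of $Q$ is a nonzero element of $\Z[t^{\pm 1}]$ whose content is divisible by only finitely many primes.

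The converse, however, has a genuine gap, and you yourself flag it. Two problems compound here. First, the ``assembled'' representations $\gamma_N:\pi_1(X)\to\Sigma\wr\mathrm{Sym}(N)$ are not actually constructed: the natural way to package the shifts of $\rho$ lands in the infinite wreath product $\Sigma\wr\Z$, and there is no shift--equivariant projection $\Sigma^{\Z}\to\Sigma^N$, so it is not clear how to produce an honest homomorphism from $\pi_1(X)$ to a finite group encoding $N$ consecutive shifts of a \emph{non}--periodic $\rho$. Second, even granting some such $\gamma_N$, the asserted growth of $\Q[t^{\pm 1}]$--rank does not follow from the infinitude of the $\mu$--orbit alone: the dimension of the regular representation of $\Sigma\wr\mathrm{Sym}(N)$ grows with $N$, so the Thurston--norm bound (Theorem~\ref{thm:lowerboundphi}) permits $\deg\Delta_K^{\gamma_N}$ to grow without ever forcing $\Delta_K^{\gamma_N}=0$. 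Your closing sentence about a ``limit Alexander module'' is a heuristic, not an argument.

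For comparison, the Silver--Williams proof of the converse is genuinely different in character: they exploit that the representation space $\Phi_\Sigma=\Hom(\pi_1(X^{ab}),\Sigma)$, equipped with the $\mu$--shift, is a \emph{shift of finite type} (this uses a Lin--style presentation of the commutator subgroup). Uncountability of an SFT is equivalent to positive topological entropy, which in turn controls the exponential growth rate of \emph{periodic} points. It is these periodic representations---which \emph{do} extend to homomorphisms of finite cyclic covers $\pi_1(X_d)$ and hence induce finite representations of $\pi_1(X)$---whose exponential proliferation forces some twisted Alexander polynomial to vanish. Your counting argument correctly shows that periodic points alone are too few to account for uncountability, but the SFT dichotomy then supplies the needed exponential growth of periodic points, which is the step your argument is missing.
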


\subsection{Twisted Alexander polynomials and the Thurston norm}\label{section:genus}

It is a classical result of Alexander that given a knot $K\subset S^3$ the following inequality holds:
 \[ 2\mbox{genus}(K) \geq \deg(\Delta_K).  \]
This result was first generalized to general 3--manifolds by McMullen \cite{McM02}
and then to twisted Alexander polynomials in \cite{FK06}.
The following theorem is \cite[Theorem~1.1]{FK06}.
The proof builds partly on ideas of Turaev's in \cite{Tu02b}.

\begin{theorem} \label{thm:lowerboundphi} Let $N$ be a 3--manifold whose boundary is empty or consists of tori.
Let $\phi \in H^1(N;\mathbb{Z})$ be non--trivial and let $\gamma:\pi_1(N)\to \mbox{GL}(k,R)$ be a representation such that
$\Delta_{N}^{\gamma \otimes \phi}\ne 0$.
 Then
\[  ||\phi||_T \geq  \frac{1}{k} \deg(\tau(N,\gamma\otimes \phi)).\]
Equivalently,
\[ ||\phi||_T \geq \frac{1}{k}\big(\deg(\Delta_{N}^{\gamma\otimes \phi})- \deg(\Delta_{N,0}^{\gamma\otimes \phi}) -\deg(\Delta_{N,2}^{\gamma\otimes \phi})\big).\]
\end{theorem}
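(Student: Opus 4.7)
The strategy is to realize $||\phi||_T$ geometrically via a Thurston norm-minimizing surface $S \subset N$ dual to $\phi$, and then to express the twisted Reidemeister torsion $\tau(N,\gamma\otimes\phi)$ as an alternating product of determinants whose sizes are governed by the Betti numbers of $S$. The argument extends McMullen's approach in the untwisted case \cite{McM02} to twisted coefficients, following Turaev's ideas in \cite{Tu02b}.

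First, fix a properly embedded surface $S$ dual to $\phi$ realizing $||\phi||_T$, with no $S^2$ or $D^2$ components, so that $||\phi||_T = -\chi(S) = b_1(S) - b_0(S) - b_2(S)$. Cut $N$ along $S$ to obtain a cobordism $M$ whose boundary contains two copies $S_+, S_-$ of $S$; the infinite cyclic cover $\tilde N_\phi \to N$ corresponding to $\phi$ is obtained by $\mathbb{Z}$--equivariantly stacking translates $t^n M$ along translates $t^n S$. Choosing compatible CW--structures and basepoints yields a Mayer--Vietoris short exact sequence of $R[t^{\pm 1}]$--chain complexes
\begin{equation*}
0 \longrightarrow C_*(S;R^k) \otimes_R R[t^{\pm 1}] \xrightarrow{\;t(i_+)_\# - (i_-)_\#\;} C_*(M;R^k) \otimes_R R[t^{\pm 1}] \longrightarrow C_*(N;R[t^{\pm 1}]^k) \longrightarrow 0,
\end{equation*}
where $(i_\pm)_\#$ are the chain maps induced by the inclusions $S_\pm \hookrightarrow M$ and the factor $t$ records the deck action identifying $S_+$ at level $n$ with $S_-$ at level $n+1$.

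Since $\Delta_N^{\gamma\otimes\phi}\neq 0$, Proposition \ref{prop:taudelta} implies that $\tau(N,\gamma\otimes\phi)\neq 0$, so the rightmost complex becomes acyclic after extending scalars to $K := Q(R)(t)$. The long exact sequence in homology then shows that $t(i_+)_* - (i_-)_*$ is an isomorphism on $H_i(S;R^k)\otimes_{R[t^{\pm 1}]} K$ for each $i$, and multiplicativity of Reidemeister torsion applied to the short exact sequence yields
\begin{equation*}
\tau(N,\gamma\otimes\phi) \;\doteq\; \prod_i \Delta_i^{(-1)^i}, \qquad \Delta_i := \det\bigl(t(i_+)_* - (i_-)_*\bigr) \in K.
\end{equation*}
Each $\Delta_i$ is represented by a matrix of the form $tA_i - B_i$ of size at most $k\cdot b_i(S)$ over $R$, so its spread as a Laurent polynomial in $t$ is at most $k\cdot b_i(S)$; moreover for $i=0,2$ the factors $\Delta_0,\Delta_2$ saturate this bound because on $H_0(S;R^k)$ and $H_2(S;R^k)$ the maps $(i_\pm)_*$ act as isomorphisms (essentially as the identity, up to a deck twist). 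Combining,
\begin{equation*}
\deg\tau(N,\gamma\otimes\phi) \;=\; \deg\Delta_1 - \deg\Delta_0 - \deg\Delta_2 \;\le\; k\bigl(b_1(S) - b_0(S) - b_2(S)\bigr) \;=\; k\,||\phi||_T.
\end{equation*}

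The main obstacle is the degree accounting in the last step: because $\deg$ assembles into an alternating sum, one must guarantee that $\Delta_0$ and $\Delta_2$ saturate their Betti-number bounds, otherwise the inequality could fail. This is where one uses the explicit description of $H_0(S;R^k)$ and $H_2(S;R^k)$ as coinvariants and invariants of $\gamma|_{\pi_1(S)}$ respectively. A secondary technical point, handled as in \cite{Tu02b}, is ensuring that the Mayer--Vietoris sequence is exact on the nose at the level of based cellular chain complexes with twisted coefficients, which requires careful choices of cell lifts to the universal cover and an honest verification that the bases compatible with this decomposition compute $\tau(N,\gamma\otimes\phi)$ up to the standard indeterminacy.
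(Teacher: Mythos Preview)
Your overall strategy---cut along a Thurston--norm minimizing surface $S$, use the Mayer--Vietoris short exact sequence to express $\tau(N,\gamma\otimes\phi)$ as an alternating product of the determinants $\Delta_i=\det(t(i_+)_*-(i_-)_*)$, and then count degrees---is exactly the approach of \cite{FK06} (which the paper cites rather than reproving) and of Turaev \cite{Tu02b}. So there is no divergence in method.

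There is, however, a genuine gap in your degree accounting. You bound the size of the matrix $tA_i-B_i$ by $k\,b_i(S)$ and then assert that for $i=0,2$ the degree of $\Delta_i$ \emph{saturates} this bound. That saturation claim is false in general: the matrix $tA_i-B_i$ acts on $H_i(S;Q(R)^k)$, whose dimension is
\[
d_i := \dim_{Q(R)} H_i(S;Q(R)^k),
\]
and one only has $d_i\le k\,b_i(S)$, with strict inequality whenever $\gamma$ restricted to some component of $S$ has nontrivial (co)invariants of dimension $<k$. Your justification that $(i_\pm)_*$ are isomorphisms on $H_0$ and $H_2$ yields $\deg\Delta_0=d_0$ and $\deg\Delta_2=d_2$, not $\deg\Delta_i=k\,b_i(S)$. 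With only $\deg\Delta_1\le k\,b_1(S)$ and $\deg\Delta_0=d_0$, $\deg\Delta_2=d_2$ (possibly strictly smaller than $k\,b_0(S)$, $k\,b_2(S)$), your displayed inequality does not follow.

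The fix is to work with the $d_i$ throughout and invoke the twisted Euler characteristic identity: since $\chi$ is independent of coefficients,
\[
d_1-d_0-d_2 \;=\; -k\,\chi(S)\;=\;k\,||\phi||_T,
\]
and then $\deg\Delta_1\le d_1$ together with $\deg\Delta_0=d_0$, $\deg\Delta_2=d_2$ gives
\[
\deg\tau(N,\gamma\otimes\phi)\;\le\;d_1-d_0-d_2\;=\;k\,||\phi||_T,
\]
which is precisely what \cite{FK06} does. (As a minor point, your exponent in $\prod_i\Delta_i^{(-1)^i}$ is off by a sign relative to the degree formula you write immediately after; it should be $(-1)^{i+1}$.)
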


\begin{remark}
\begin{enumerate} In \cite{FK06} it was furthermore shown using `KnotTwister' (cf. \cite{Fr09}) that
twisted Alexander polynomials  detect the genus of all knots with up to twelve crossings.
\item
It seems reasonable to conjecture that given an irreducible 3--manifold $N$
twisted Alexander polynomials detect the Thurston norm for any $\phi \in H^1(N;\mathbb{Z})$.
A positive answer would have interesting consequences for 4--manifold topology as pointed out in
\cite{FV09}.
\item If $\Delta_{N}^{\gamma \otimes \phi}=0$, then we define the \emph{torsion twisted Alexander polynomial} $\tilde{\Delta}_N^{\gamma\otimes \phi}$ to be
the order of the $R[t^{\pm 1}]$--module
\begin{eqnarray*}
 &&\mbox{Tor}_{R[t^{\pm1}]}(H_1(N;R[t^{\pm1}]^k))=\\
&&=\{ v\in H_1(N;R[t^{\pm1}]^k)\, |\, \lambda v=0 \mbox{ for some }\lambda \in R[t^{\pm1}] \setminus \{0\}\,\}.
\end{eqnarray*}
It is then  shown in  \cite[Section~4]{FK06} that
the $\tilde{\Delta}^{\gamma\otimes \phi}_N$ also gives rise to give lower bounds on the Thurston norm.
We point out that  by \cite[Theorem~3.12~(3)]{Hi02} and  \cite[Lemma~4.9]{Tu01} the torsion twisted Alexander polynomial can be computed directly from a presentation
of $H_1(N;R[t^{\pm1}]^k)$.
\end{enumerate}
\end{remark}

Note that Theorem \ref{thm:lowerboundphi} gives lower bounds on the Thurston norm for a given $\phi\in H^1(N;\mathbb{Z})$.
In order to give bounds for the whole Thurston norm ball at once, we will introduce
 twisted Alexander norms, generalizing McMullen's Alexander norm \cite{McM02} and Turaev's torsion norm \cite{Tu02a}.
In the following let $N$ be a 3--manifold with empty or toroidal boundary such that $b_1(N)>1$.
Let  $\psi:\pi_1(N)\to F\mathrel{\mathop:}=H_1(N;\mathbb{Z})/\mbox{torsion}$ be the canonical projection map.
Let $\gamma:\pi_1(N)\to \mbox{GL}(k,R)$ be a  representation. If $\Delta_{N}^{\gamma\otimes \psi}=0$ then we set
$||\phi||_{A}^{\gamma}=0$ for all $\phi\in H^1(N;\mathbb{R} )$. Otherwise we write $\Delta_{N}^{\gamma\otimes \psi}=\sum a_if_i$
for $a_i\in R$ and $f_i \in F$. Given $\phi \in H^1(N;\mathbb{R} )$ we then define
\[ ||\phi||_{A}^{\gamma} \mathrel{\mathop:}=\max\{  \phi(f_i-f_j)\, |\, (f_i, f_j) \mbox{ such that }a_ia_j\ne 0\}.\]
Note that this norm is independent of the choice of representative of $\Delta_{N}^{\gamma\otimes \psi}$.
Clearly this defines a seminorm on $H^1(N;\mathbb{R} )$ which we call the
\emph{twisted Alexander norm of $(N,\gamma)$}. Note that if $\gamma :
\pi_1(N) \to \mbox{GL} (1,\mathbb{Z})$ is the trivial representation, then we just obtain McMullen's Alexander norm.

The following is proved in \cite[Theorem~3.1]{FK08a},
but we also refer to the work of McMullen \cite{McM02}, Turaev \cite{Tu02a}, \cite[Section~6]{Tu02b}, Harvey \cite{Ha05} and Vidussi \cite{Vi99,Vi03}.
The main idea of the proof is to combine  Theorem \ref{thm:lowerboundphi} with Corollary \ref{cor:changevardelta}.

\begin{theorem}
Let $N$ be a 3--manifold with empty or toroidal boundary such that $b_1(N)>1$. Let $\psi:\pi_1(N)\to F\mathrel{\mathop:}=H_1(N;\mathbb{Z})/\mbox{torsion}$ be the canonical projection map
and let $\gamma:\pi_1(N)\to \mbox{GL}(k,R)$ be a representation with $R$ a Noetherian UFD.
Then for any $\phi\in H^1(N;\mathbb{R} )$ we have
\[ ||\phi||_T \geq \frac{1}{k}||\phi||_A^\gamma\]
and equality holds for any $\phi$ in a fibered cone of the Thurston norm ball.
\end{theorem}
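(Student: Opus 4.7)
The plan is to reduce to the one-variable Thurston norm bound of Theorem \ref{thm:lowerboundphi} by specializing the multi-variable Alexander polynomial via Corollary \ref{cor:changevardelta}, and then to pass from a dense set of generic integer classes to arbitrary $\phi\in H^1(N;\mathbb{R})$ by continuity. Throughout I may assume $\Delta_N^{\gamma\otimes\psi}\neq 0$, since otherwise $||\phi||_A^\gamma=0$ by definition and the inequality is trivial. Because $b_1(N)\geq 2$, the image of $\psi$ has rank at least two, so Proposition \ref{prop:deltatau}(2) and (6) give $\Delta_{N,0}^{\gamma\otimes\psi}\doteq\Delta_{N,2}^{\gamma\otimes\psi}\doteq 1$, and hence $\tau(N,\gamma\otimes\psi)\doteq\Delta_N^{\gamma\otimes\psi}$ lies in $R[F]$.

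Write $\Delta_N^{\gamma\otimes\psi}=\sum_i a_i f_i$ with $a_i\in R\setminus\{0\}$ and distinct $f_i\in F$, and call $\phi\in\mbox{Hom}(F,\mathbb{R})=H^1(N;\mathbb{R})$ \emph{generic} if both $\max_i\phi(f_i)$ and $\min_i\phi(f_i)$ are attained at unique indices. The non-generic locus is a finite union of rational hyperplanes (one for each pair with $f_i\neq f_j$ and $a_ia_j\neq 0$), so the generic integer classes are dense in $H^1(N;\mathbb{R})$ after rescaling. For any such generic integer $\phi$, the specialization $\phi(\Delta_N^{\gamma\otimes\psi})=\sum_i a_i t^{\phi(f_i)}$ is nonzero with no cancellation at its extreme exponents, so $\deg\phi(\Delta_N^{\gamma\otimes\psi})=||\phi||_A^\gamma$. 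Corollary \ref{cor:changevardelta} then yields $\tau(N,\gamma\otimes\phi)\doteq\phi(\Delta_N^{\gamma\otimes\psi})$, so by Proposition \ref{prop:deltatau}(7) the hypothesis $\Delta_N^{\gamma\otimes\phi}\neq 0$ of Theorem \ref{thm:lowerboundphi} is satisfied and it gives
\[ k\,||\phi||_T\;\geq\;\deg\tau(N,\gamma\otimes\phi)\;=\;||\phi||_A^\gamma. \]
Both $||\cdot||_T$ and $||\cdot||_A^\gamma$ are seminorms on the finite-dimensional space $H^1(N;\mathbb{R})$, hence continuous, so the pointwise inequality on a dense set extends to all real classes.

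For the equality in a fibered cone, I would use that for every integer $\phi$ one always has $\deg\phi(\Delta_N^{\gamma\otimes\psi})\leq ||\phi||_A^\gamma$, since cancellation can only shrink the support. For an integral $\phi$ lying in a fibered cone, Theorem \ref{thm:fibob} combined with Corollary \ref{cor:changevardelta} gives
\[ k||\phi||_T=\deg\tau(N,\gamma\otimes\phi)=\deg\phi(\Delta_N^{\gamma\otimes\psi})\leq ||\phi||_A^\gamma, \]
which together with the already-proved reverse inequality yields equality. Since fibered cones are open and both sides are continuous and homogeneous, approximation by integer classes extends the equality to all real $\phi$ in the cone.

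The main obstacle is precisely this density/continuity maneuver: the strict inequality $\deg\phi(\Delta_N^{\gamma\otimes\psi})<||\phi||_A^\gamma$ can genuinely occur at non-generic integer classes, so Theorem \ref{thm:lowerboundphi} by itself gives only the weaker bound $k||\phi||_T\geq\deg\phi(\Delta_N^{\gamma\otimes\psi})$ at such points. Bridging this gap requires the continuity of the two seminorms on $H^1(N;\mathbb{R})$ together with the density of the generic locus, rather than any pointwise strengthening of Theorem \ref{thm:lowerboundphi}.
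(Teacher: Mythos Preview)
Your proposal is correct and follows precisely the approach the paper sketches: the paper states only that ``the main idea of the proof is to combine Theorem~\ref{thm:lowerboundphi} with Corollary~\ref{cor:changevardelta},'' and you have filled in the standard genericity/density argument (originating with McMullen in the untwisted case) needed to make this rigorous. Your handling of the fibered cone via Theorem~\ref{thm:fibob} and the observation that specialization can only decrease degree is also the intended argument.
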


We refer to \cite{McM02} and \cite{FK08a} for some calculations, and we refer to \cite{Du01} for more information on the relationship
between the Alexander norm and the Thurston norm.

\subsection{Normalized twisted Reidemeister torsion and the free genus of a knot}\label{section:normalized}

Let $K\subset S^3$ a knot and let $\gamma:\pi_1(S^3\sm \nu K)\to \mbox{GL}(k,R)$ be a representation with $R$ a Noetherian UFD.
Let $\epsilon=\det(\gamma(\mu))$ where $\mu$ denotes a meridian of $K$.
Kitayama \cite{Kiy08a} introduced an invariant $\tilde{\Delta}_{K,\gamma}\in Q(R(\epsilon^{\frac{1}{2}})[t^{\pm \frac{1}{2}}])$ which is a normalized version of the twisted Reidemeister torsion $\tau(K,\gamma)$, i.e.  $\tau(K,\gamma)$ has no indeterminacy and up to multiplication by an element of the form $\epsilon^r t^{\frac{s}{2}}$ it is a representative of the twisted Reidemeister torsion. (The invariant $\tilde{\Delta}_{K,\gamma}$ should not be confused with the torsion polynomial introduced in Section \ref{section:linkslice}.)

Kitayama \cite[Theorem~6.3]{Kiy08a} studies the invariant $\tilde{\Delta}_{K,\gamma}$ for fibered knots, obtaining a refined version of the fibering obstruction which we stated in Theorem \ref{thm:fibob}. Furthermore \cite[Theorem~5.8]{Kiy08a} proves a duality theorem for $\tilde{\Delta}_{K,\gamma}$, refining  Proposition \ref{prop:dualitytau}.

In the following we say that $S$ is a \emph{free Seifert surface} for $K$ if $\pi_1(S^3\setminus S)$ is a free group.
Note that Seifert's algorithm produces free Seifert surfaces, in particular any knot has a free Seifert surface.
Given $K$ the \emph{free genus} is now defined as
\[ \mbox{free-genus}(K)=\mbox{min}\{ \mbox{genus}(S) \, |\, \mbox{$S$ free Seifert surface for $K$}\}.\]
Clearly we have $\mbox{free-genus}(K)\geq \mbox{genus}(K)$.
In order to state the lower bound on the free genus coming from $\tilde{\Delta}_{K,\gamma}$ we have to make a few more definitions.
Given a Laurent polynomial $p=\sum_{i=k}^l a_it^i\in R[t^{\pm \frac{1}{2}}]$ with $a_k\ne 0, a_l\ne 0$
we write $\mbox{l-deg}(p)=k$ (`lowest degree') and $\mbox{h-deg}(p)=l$ (`highest degree').
Furthermore given $f\in Q(R[t^{\pm \frac{1}{2}}])$ we  define
\[ \mbox{h-deg}(f)=\mbox{h-deg}(p)-\mbox{h-deg}(q)\]
where we pick $p,q\in R[t^{\pm \frac{1}{2}}]$ with $f=\frac{p}{q}$.
Kitayama \cite[Proposition~6.6]{Kiy08a} proved the following theorem:

\begin{theorem}
Given a knot $K\subset S^3$ and a representation $\gamma:\pi_1(S^3\sm \nu K)\to \mbox{GL}(k,R)$  with $R$ a Noetherian UFD
we have
\[ 2k  \mbox{free-genus}(K) \geq 2\,\mbox{h-deg}(\tilde{\Delta}_{K,\gamma})+k.\]
\end{theorem}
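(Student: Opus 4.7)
The plan is to exploit the small HNN presentation of $\pi := \pi_1(S^3\setminus \nu K)$ coming from a free Seifert surface, and then compute the twisted Reidemeister torsion directly from this presentation via the algorithm of Theorem~\ref{thm:Tu22boundary}. Pick a free Seifert surface $S$ realising the free genus, set $g = \mbox{free-genus}(K) = \mbox{genus}(S)$, and let $Y' := (S^3\setminus \nu K)\setminus \nu S$. Since $\pi_1(S^3\setminus S)$ is free, the loop theorem forces $Y'$ to be a genus-$2g$ handlebody, and cutting along $S$ presents $\pi$ as an HNN extension
\[
\pi = \langle x_1,\dots,x_{2g},\, t \mid R_i := t\,\phi^+(a_i)\,t^{-1}\,\phi^-(a_i)^{-1},\ i = 1,\dots,2g\rangle,
\]
where $\{x_j\}$ and $\{a_i\}$ freely generate $\pi_1(Y')$ and $\pi_1(S)$ respectively, $\phi^\pm$ are induced by the two sides of $S$, and $t$ is the stable letter, a meridian of $K$. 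Note that $\psi(t) = 1$ while $\psi$ vanishes on the image of $\pi_1(Y')$, since loops in $S^3\setminus S$ have zero intersection with $S$ and hence zero linking number with $\partial S = K$. This presentation has $n = 2g+1$ generators and $n-1 = 2g$ relations, precisely the format required by Theorem~\ref{thm:Tu22boundary}.

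Applying Theorem~\ref{thm:Tu22boundary} with $h_s = t$ gives $\tau(S^3\setminus \nu K, \gamma\otimes\psi) \doteq \det((\gamma\otimes\psi)(B_2))/\det(I - t\gamma(t))$, where $B_2$ is the $(2g)\times(2g)$ Fox-Jacobian $(\partial R_i/\partial x_j)_{i,j}$. The Fox product rule combined with the relation $R_i = 1$ in $\pi$ and the vanishing of $\psi$ on $\pi_1(Y')$ yields
\[
(\gamma\otimes\psi)\!\left(\frac{\partial R_i}{\partial x_j}\right) = t\,\gamma(t)\,\gamma\!\left(\frac{\partial \phi^+(a_i)}{\partial x_j}\right) \;-\; \gamma\!\left(\frac{\partial \phi^-(a_i)}{\partial x_j}\right),
\]
so each $k\times k$ block of $(\gamma\otimes\psi)(B_2)$ is affine in $t$. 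Viewing the full matrix as a $(2gk)\times(2gk)$ matrix over $R[t]$ with entries of $t$-degree at most one, its determinant has $\mbox{h-deg}\leq 2gk$ and $\mbox{l-deg}\geq 0$. The denominator has $\mbox{h-deg} = k$ and $\mbox{l-deg} = 0$, giving the span bound $\mbox{span}(\tau) \leq 2gk - k = k(2g-1)$.

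To finish, I will invoke Kitayama's duality theorem \cite[Theorem~5.8]{Kiy08a}, which refines Proposition~\ref{prop:dualitytau} to the normalised setting: it asserts that $\tilde\Delta_{K,\gamma}$ is symmetric under $t\mapsto t^{-1}$, equivalently $\mbox{h-deg}(\tilde\Delta_{K,\gamma}) = -\mbox{l-deg}(\tilde\Delta_{K,\gamma})$ (the half-integer $t^{\pm 1/2}$ exponents in Kitayama's ring accommodate odd span). Since $\tilde\Delta_{K,\gamma}$ represents $\tau$ up to a unit, its span is unchanged, so $2\,\mbox{h-deg}(\tilde\Delta_{K,\gamma}) = \mbox{span}(\tilde\Delta_{K,\gamma}) \leq k(2g-1)$, which rearranges to the claim $2kg \geq 2\,\mbox{h-deg}(\tilde\Delta_{K,\gamma}) + k$. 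The hard part of the plan will be the Fox-derivative bookkeeping of the middle paragraph: while elementary, one must verify that the blocks of $(\gamma\otimes\psi)(B_2)$ are genuinely affine in $t$ with no higher-degree contributions, which depends essentially on $R_i = 1$ in $\pi$ together with $\psi|_{\pi_1(Y')} = 0$; once these and Kitayama's duality are in hand, the conclusion is formal.
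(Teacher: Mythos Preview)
Your HNN-extension setup from a free Seifert surface and the Fox-calculus computation are correct, and they do yield the span bound $\mbox{h-deg}(\tau) - \mbox{l-deg}(\tau) \leq k(2g-1)$. The gap is in the final step: you invoke Kitayama's duality theorem to conclude $\mbox{h-deg}(\tilde\Delta_{K,\gamma}) = -\,\mbox{l-deg}(\tilde\Delta_{K,\gamma})$, but that is not what the theorem says for a general representation $\gamma$. Kitayama's duality, like Proposition~\ref{prop:dualitytau} which it refines, relates $\tilde\Delta_{K,\gamma}$ to $\tilde\Delta_{K,\bar\gamma}$; the symmetry $\mbox{h-deg} = -\,\mbox{l-deg}$ follows only when $\gamma$ is unitary. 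The paper says exactly this in the sentence immediately following the theorem: in the unitary case the free-genus bound collapses to the ordinary Thurston-norm bound of Theorem~\ref{thm:lowerboundphi}, and the whole interest of Kitayama's result lies in the non-unitary case, precisely where your symmetry step is unavailable.

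What your argument is missing is control of the normalizing shift. The representative of $\tau$ you produce satisfies $\mbox{h-deg} \leq k(2g-1)$ and $\mbox{l-deg} \geq 0$, but $\tilde\Delta_{K,\gamma}$ differs from it by a factor $\epsilon^r t^{s/2}$, and bounding $\mbox{h-deg}(\tilde\Delta_{K,\gamma})$ on its own requires determining $s$. Kitayama's normalization is fixed by specific combinatorial data, and his proof has to track that data through the Seifert-surface computation; a span bound alone cannot recover the one-sided estimate. (The paper does not reproduce Kitayama's argument and simply cites \cite[Proposition~6.6]{Kiy08a}.)
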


Note that if $\mbox{h-deg}(\tilde{\Delta}_{K,\gamma})=-\, \mbox{l-deg}(\tilde{\Delta}_{K,\gamma})$ (which is the case for unitary representations)
then the bound in the theorem is implied by the bound given in Theorem \ref{thm:lowerboundphi}.
It is a very interesting question whether Kitayama's bound can detect the difference between the genus and the free genus.

Recall that Lin's original definition of the twisted Alexander polynomial is in terms of regular Seifert surfaces,
it might be worthwhile to explore the possibility that an appropriate version of Lin's twisted Alexander polynomial gives lower bounds on the `regular genus' of a knot
which can tell the regular genus apart from the free genus.

\section{Twisted invariants of knots and special representations}\label{section:twodim}

Given a knot $K\subset S^3$ the representations which have been studied most are the 2--dimensional complex representations  and the metabelian representations.
It is therefore natural to consider special properties of twisted Alexander polynomials corresponding to such  representations.



\subsection{Parabolic representations}
\label{section:specialknots}

Let $K\subset S^3$ be a knot. A representation
$\gamma:\pi_1(S^3\setminus \nu K)\to \mbox{SL}(2,\mathbb{C})$ is called \emph{parabolic}
if the
image of any meridian is a matrix with trace 2. Note that Thurston \cite{Th87} showed that
given a hyperbolic knot $K$   the discrete faithful representation
$\pi_1(S^3\setminus \nu K) \to \mbox{PSL}(2,\mathbb{C})$ lifts to a
parabolic representation $\gamma : \pi_1(S^3\setminus \nu K) \to \mbox{SL}(2,\mathbb{C})$.
The twisted Reidemeister torsion corresponding to this canonical representation has been surprisingly little studied
(cf. though \cite{Sug07} and \cite[Corollary~4.2]{Mo08}).

Throughout the remainder of this section let $K$ be a 2--bridge knot. Then the group $\pi_1(S^3\setminus \nu K)$ has a presentation of the form
$\langle x,y | Wx=yW \rangle$ where $x,y$ are meridians of $K$ and $W$ is a word in $x^{\pm 1}, y^{\pm 1}$.
Parabolic representations of 2--bridge knots have been extensively studied by Riley \cite[Section~3]{Ri72}.
To a 2--bridge knot $K$ Riley associates a monic polynomial $\Phi_K(t)\in \mathbb{Z}[t^{\pm 1}]$
such that  any zero $\zeta$ of $\Phi_K(t)$ gives rise to a representation
$\gamma_\zeta:\pi_1(S^3\setminus \nu K)\to \mbox{SL}(2,\mathbb{C})$ of the form
\[ \gamma_\zeta(x)= \begin{pmatrix} 1&1\\ 0&1 \end{pmatrix} \mbox{ and }\gamma_\zeta(y)= \begin{pmatrix} 1&0\\ \zeta &1 \end{pmatrix}. \]
Furthermore Riley shows that any parabolic representation of a 2--bridge knot is conjugate to such a representation
(cf. also \cite[Section~5]{SW09c} for details).

Given an irreducible factor $\phi(t)$ of $\Phi_K(t)$ of degree $d$ one can consider the representation $\oplus \gamma_{\zeta'}$ where $\zeta'$ runs over the set of all zeroes of $\phi(t)$.
Silver and Williams show that this representation is conjugate to an integral representation $\gamma_{\phi(t)}:\pi_1(S^3\setminus \nu K)\to \mbox{GL}(2d,\mathbb{Z})$
which is called the \emph{total representation corresponding to $\phi(t)$}.

Twisted Alexander polynomials of 2--bridge knots corresponding to such parabolic and total representations have been studied
extensively by Silver and Williams \cite{SW09c} and Hirasawa and Murasugi (cf. \cite{Mu06} and \cite{HM08}).
In particular the following theorem is shown. It should be compared  to the classical result that given a knot
$K$ we have $\Delta_K(1)=1$.

\begin{theorem}
Let $K$ be a two--bridge knot and $\phi(t)$ an irreducible factor of $\Phi_K(t)$ of degree $d$.
Then
\[ \left|\Delta_K^{\gamma_{\phi(t)}}(1)\right| =2^d.\]
\end{theorem}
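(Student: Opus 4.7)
The plan is to decompose $\gamma_{\phi(t)}$ into its two--dimensional parabolic summands, compute each $\Delta_K^{\gamma_\zeta}(1)$ by Fox calculus on the standard two--bridge presentation, and assemble. The main obstacle will lie in pinning down a second--order Taylor coefficient of a $2\times 2$ determinant to the exact value $\pm 2$.

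First, by construction $\gamma_{\phi(t)}$ is conjugate over $\mathbb{C}$ to the direct sum $\bigoplus_{\zeta}\gamma_\zeta$, where $\zeta$ runs over the $d$ roots of $\phi$. By Lemma \ref{lem:equ:rep} and the multiplicativity of twisted Alexander polynomials under direct sums (immediate from the Fox--calculus description of Section \ref{section:compdelta}),
\[
\Delta_K^{\gamma_{\phi(t)}}(t)\doteq\prod_\zeta \Delta_K^{\gamma_\zeta}(t)\in\mathbb{Z}[\zeta][t^{\pm1}],
\]
and the product is Galois--invariant since the left hand side already lies in $\mathbb{Z}[t^{\pm1}]$. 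The theorem thus reduces to showing $|\Delta_K^{\gamma_\zeta}(1)|=2$ for each $\zeta$.

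Next, exploit the two--bridge presentation $\langle x,y\mid r\rangle$ with $r=WxW^{-1}y^{-1}$, noting that the abelianization sends both $x$ and $y$ to $t$. The algorithm in Section \ref{section:compdelta} applied to this one--relator presentation yields
\[
\Delta_K^{\gamma_\zeta}(t)\cdot \det\bigl(I-t\,\gamma_\zeta(x)\bigr)\doteq\det\bigl((\gamma_\zeta\otimes\psi)(\partial r/\partial y)\bigr).
\]
A short Fox--calculus computation, combined with the relation $WxW^{-1}=y$, gives
\[
\partial r/\partial y=(1-y)\,\partial W/\partial y-1\in\mathbb{Z}[\pi_1(S^3\setminus \nu K)].
\]
Since $\gamma_\zeta(x)$ is unipotent, $\det(I-t\,\gamma_\zeta(x))=(1-t)^2$, so the task becomes to show that the $2\times 2$ matrix $M(t):=(\gamma_\zeta\otimes\psi)(\partial r/\partial y)$ satisfies $\det M(t)=\pm 2\,(t-1)^2+O((t-1)^3)$ in $\mathbb{Z}[\zeta][t^{\pm1}]$.

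The remaining step is an explicit local analysis of $M(t)$ near $t=1$. Writing $Y=\gamma_\zeta(y)$ and $A(t)=(\gamma_\zeta\otimes\psi)(\partial W/\partial y)$, we have $M(t)=(I-tY)A(t)-I$. At $t=1$ the matrix $I-Y$ is rank--one nilpotent, so $M(1)=uv^T-I$ and $\det M(1)=1-v^Tu$; the identity $v^Tu=1$ is precisely the content of Riley's equation $\Phi_K(\zeta)=0$. Differentiating the relation $\gamma_\zeta(W)\,\gamma_\zeta(x)=\gamma_\zeta(y)\,\gamma_\zeta(W)$, or equivalently expanding $M(t)$ in $s=t-1$, should likewise force the $s$--coefficient of $\det M(t)$ to vanish, while the $s^2$--coefficient admits a trace expression involving $\gamma_\zeta(W)$. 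Here I expect the palindromic symmetry of $W$ together with $\det\gamma_\zeta(W)=1$ to collapse the trace expression to $\pm 2$. The main obstacle is precisely this identification of the $s^2$--coefficient: it will require either a direct combinatorial computation valid for arbitrary two--bridge $W$, or an indirect argument based on the integrality of $|\Delta_K^{\gamma_{\phi(t)}}(1)|$ together with a base--case verification (for instance on the trefoil or figure eight) to pin the power of $2$.
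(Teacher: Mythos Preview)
The paper is a survey and does not give its own proof of this theorem; it attributes the general case to Silver and Williams \cite[Theorem~6.1]{SW09c} and a special case to Hirasawa and Murasugi \cite[Theorem~A]{HM08}. So there is no ``paper's proof'' to compare against, only the question of whether your argument stands on its own.

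It does not. You correctly isolate the problem to a second--order Taylor coefficient of $\det M(t)$ at $t=1$, and you verify the vanishing at zeroth order via Riley's relation, but then the argument stops. Your two proposed ways forward are both inadequate as stated. The ``direct combinatorial computation valid for arbitrary two--bridge $W$'' is exactly the missing content of the proof, not a routine step; the palindromic structure of $W$ and $\det\gamma_\zeta(W)=1$ by themselves do not obviously force any particular trace value, and you have not even shown the first--order coefficient vanishes. The ``indirect argument based on integrality together with a base--case verification'' is not a proof strategy at all: knowing that $|\Delta_K^{\gamma_{\phi(t)}}(1)|$ is a positive integer and equals $2^d$ for the trefoil and figure--eight tells you nothing about other two--bridge knots, absent some recursion or uniform formula you have not supplied.

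There is also a normalization issue you should address explicitly. Over $\mathbb{C}$ the twisted Alexander polynomial $\Delta_K^{\gamma_\zeta}$ is only well--defined up to a unit $ct^n$ with $c\in\mathbb{C}^*$, so $|\Delta_K^{\gamma_\zeta}(1)|$ is not a priori meaningful. You are implicitly using Wada's invariant (equivalently Reidemeister torsion), whose indeterminacy for $\mbox{SL}(2,\mathbb{C})$--representations is only $\pm t^n$; you should say so. With that fixed, your reduction to the two--dimensional case is sound, but the core computation remains undone.
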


A special case of the
theorem
is shown in \cite[Theorem~A]{HM08},
the general case is proved in \cite[Theorem~6.1]{SW09c}.
Silver and Williams also conjectured that under the assumptions of the theorem we have
\[ \left|\Delta_K^{\gamma_{\phi(t)}}(-1)\right| =2^d m^2\]
for some odd number $m$.

We refer to \cite{SW09c} and \cite{HM08} for more on twisted Alexander polynomials of 2--bridge knots.
All three  papers contain a wealth of interesting examples and results which we find impossible to summarize in this short survey.

Silver and Williams also considered twisted Alexander polynomials of torus knots.
They showed \cite[Section~7]{SW09c} that for any parabolic representation $\gamma$ of a torus knot $K$, the
twisted Alexander polynomial $\Delta^\gamma_K$ is a product of cyclotomic polynomials.

\subsection{Twisted Alexander polynomials and the space of 2--dimensional representations}

It is a natural question to study the behavior of twisted Alexander polynomials under a change of representation.
Recall that given a group $\pi$ we say that two representations $\alpha,\beta:\pi\to \mbox{GL}(k,R)$ are
\emph{conjugate} if there exists $P\in \mbox{GL}(k,R)$ with $\alpha(g)=P\beta(g)P^{-1}$ for all $g\in \pi$.
By Lemma \ref{lem:equ:rep} we can view  Alexander polynomials as function on the set of conjugacy classes of representations.

Let $K$ be a 2--bridge knot. Riley \cite{Ri84}
(cf. also \cite[Proposition~3]{DHY09}) showed that
conjugacy classes of representations $\pi_1(S^3\setminus \nu K)\to \mbox{SL}(2,\mathbb{C})$
correspond to the zeros of an affine algebraic curve in $\mathbb{C}^2$.
The twisted Reidemeister torsion corresponding to these representations for twist knots have been studied by
Morifuji \cite{Mo08} (cf. also \cite{GM03}). The computations show in particular that for twist knots
twisted Reidemeister torsion detects fiberedness and the genus for all but finitely many conjugacy classes of non--abelian $\mbox{SL}(2,\mathbb{C})$--representations.

Regarding twist knots we also refer to the work of
Huynh and Le \cite[Theorem~3.3]{HL07} who found an unexpected  relationship between twisted Alexander polynomials and the A--polynomial.
(Note thought that their definition of twisted Alexander polynomials differs somewhat from our approach.)

Finally we refer to Kitayama's work \cite{Kiy08b} for certain symmetries when we view twisted Alexander polynomials of knots in rational homology spheres
as a function on the space of 2--dimensional regular unitary representations.

%
%
%
%

\subsection{Twisted invariants of hyperbolic knots and links}

Let $L\subset S^3$ be a hyperbolic link. Then the corresponding representation $\pi_1(S^3\sm \nu L)\to PSL(2,\C)$ lifts
to a canonical representation $\g_{can}:\pi_1(S^3\sm \nu L)\to SL(2,\C)$ and
it induces the adjoint representation $\g_{adj}: \pi_1(S^3\sm \nu L)\to PSL(2,\C)\to \aut(sl(2,\C))\cong \mbox{SL}(3,\C)$.

The corresponding invariant $\tau(L,\g_{adj})$ has been studied in great detail by Dubois and Yamaguchi \cite{DY09}.
In particular it is shown that $\tau(L,\g_{adj})$ is a symmetric non-zero polynomial (the non-vanishing result builds on work
of Porti \cite{Po97}). Furthermore the invariant $\tau(L,\g_{adj})$ is computed explicitly for many examples.

In \cite{DFJ10} it is shown that given a knot $K$ the invariant $\tau(K,\g_{can})$ gives rise to a well-defined  non-zero
invariant $\tkt$.
(This result builds on work of Menal-Ferrer and Porti \cite{MP09}).
This invariant is computed for all knots up to 13 crossings, for all these knots the invariant $\tkt$ detects
the genus, the fiberedness and the chirality.

\subsection{Metabelian representations}

Given a group $G$ the
derived series of $G$ is defined  inductively  by $G^{(0)}=G$ and $G^{(i+1)}=[G^{(i)},G^{(i)}]$.
A representation of a group $G$ is called metabelian if it factors through $G/G^{(2)}$.
Note that if $K$ is a knot, then the metabelian quotient $\pi_1(S^3\setminus \nu K)/\pi_1(S^3\setminus \nu K)^{(2)}$ is well--known to be determined by the Alexander module of $K$
(cf. e.g. \cite[Section~2]{BF08}).
Metabelian representations and metabelian quotients of knot groups have been studied extensively, we refer to \cite{Fo62,Fo70}, \cite{Hat79}, \cite{Fr04},  \cite{Je08} and \cite{BF08}
for more information.

The structure of twisted Alexander polynomials of knots corresponding to certain types of metabelian representations has been studied in detail by Hirasawa and Murasugi
\cite{HM09a,HM09b}. These papers contain several interesting conjectures regarding special properties of such twisted Alexander polynomials, furthermore these conjectures
are verified for certain classes of 2--bridge knots and further evidence is given by explicit calculations.

\section{Miscellaneous applications of twisted Reidemeister torsion to knot theory}\label{section:misc}

In this section we will summarize various  applications of twisted Alexander polynomials to the study of knots and links.

\subsection{A Partial order on knots}\label{section:po}

Given a knot $K$ we write $\pi_1(S^3\sm \nu K)\mathrel{\mathop:}=\pi_1(S^3\setminus K)$.
 For two prime
knots $K_1$ and $K_2$ one defines $K_1 \geq K_2$ if there exists a surjective group homomorphism
$\varphi:\pi_1(K_1)\to \pi_1(K_2)$.
The relation $``\geq"$ defines a partial order on the set of prime knots (cf. \cite{KS05a}). Its study is often related to a well--known conjecture of J. Simon, that posits that for a given $K_1$, the set of knots $K_2$ s.t. $K_1 \geq K_2$ is finite.

Kitano, Suzuki and Wada \cite{KSW05} prove the following theorem
which generalizes a result of Murasugi \cite{Mu03}.

\begin{theorem}\label{thm:epi}
Let $K_1$ and $K_2$ be two knots in $S^3$ and let $\varphi:\pi_1(K_1)\to \pi_1(K_2)$ an epimorphism.
Let $\gamma:\pi_1(K_2)\to \mbox{GL}(k,R)$ a representation with $R$ a Noetherian UFD. Then
\[ \frac{\tau(K_1,\gamma\circ \varphi)}{\tau(K_2,\gamma )} \]
is an element in $R[t^{\pm 1}]$.
\end{theorem}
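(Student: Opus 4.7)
The plan is to compute both torsions with the Fox-calculus formula of Theorem~\ref{thm:Tu22boundary} applied to carefully matched presentations, and to exhibit a block decomposition of the Alexander matrix of $\pi_1(K_1)$ in which the Alexander matrix of $\pi_1(K_2)$ appears as a diagonal block.

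First I would fix a deficiency-one Wirtinger presentation $\pi_1(K_2)=\langle y_1,\dots,y_n\mid r_1,\dots,r_{n-1}\rangle$ with every $y_i$ a meridian, so that $\psi_2(y_i)=1$. Theorem~\ref{thm:Tu22boundary} applied with $s=1$ then yields
\[
\tau(K_2,\gamma)\doteq \det\bigl((\gamma\otimes\psi_2)(B_2)\bigr)\cdot\det\bigl(\mathrm{id}-t\gamma(y_1)\bigr)^{-1},
\]
where $B_2=(\partial r_i/\partial y_j)_{j\ge 2}$ is the Fox Jacobian with the first column deleted. Using surjectivity of $\varphi$ I would lift the $y_i$ to elements $x_i\in\pi_1(K_1)$ with $\varphi(x_i)=y_i$ and extend to a generating set $x_1,\dots,x_n,u_1,\dots,u_m$ of $\pi_1(K_1)$ by elements $u_k$ chosen (after modifying any initial auxiliary generators by words in the $x_i$) to lie in $\ker(\varphi)$. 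Since $\pi_1(K_1)$ is a knot group and hence has deficiency one, a Tietze argument yields a presentation
\[
\pi_1(K_1)=\langle x_1,\dots,x_n,u_1,\dots,u_m\mid\tilde r_1,\dots,\tilde r_{n-1},s_1,\dots,s_m\rangle,
\]
where $\varphi$ sends $\tilde r_i$ to $r_i$ and each $s_\ell$ to $1$. Because $\psi_1=\psi_2\circ\varphi$ and $u_k\in\ker(\varphi)$, the representation $(\gamma\circ\varphi)\otimes\psi_1$ sends every $u_k$ to the identity matrix.

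Applying Theorem~\ref{thm:Tu22boundary} to this presentation of $\pi_1(K_1)$ and deleting the column for the meridian lift $x_1$, the denominator matches that for $K_2$ and cancels in the ratio. If one can arrange the $\tilde r_i$ to be words in $x_1,\dots,x_n$ alone, then $\partial\tilde r_i/\partial u_k=0$ and the resulting square Fox matrix has block form
\[
B_1=\begin{pmatrix} P & 0 \\ P' & Q'\end{pmatrix}
\]
with $P=(\partial\tilde r_i/\partial x_j)_{j\ge 2}$, $P'=(\partial s_\ell/\partial x_j)_{j\ge 2}$, and $Q'=(\partial s_\ell/\partial u_k)$. Naturality of Fox derivatives under $\varphi$ identifies the image of $P$ under $(\gamma\circ\varphi)\otimes\psi_1$ with $(\gamma\otimes\psi_2)(B_2)$, so the ratio of torsions collapses to
\[
\tau(K_1,\gamma\circ\varphi)/\tau(K_2,\gamma)\doteq \det\bigl(((\gamma\circ\varphi)\otimes\psi_1)(Q')\bigr)\in R[t^{\pm 1}],
\]
as claimed.

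The main obstacle is exactly this block-triangular arrangement: the lift $r_i(x_1,\dots,x_n)$ need not equal $1$ in $\pi_1(K_1)$, so making $\tilde r_i$ depend on the $x_j$ alone requires absorbing the discrepancy $r_i(x)\in\ker(\varphi)$ into the relators $s_\ell$ while simultaneously maintaining the deficiency-one count. A careful inductive use of Tietze transformations, combined with the fact that $\ker(\varphi)$ is normally generated by finitely many elements (both groups being finitely presented), should achieve this. If the combinatorial argument proves too delicate, the fallback is to realize $\varphi$ by a cellular map $f:E_1\to E_2$ of the aspherical knot complements, lift to a $\varphi$-equivariant chain map of universal-cover chain complexes, and apply the multiplicativity of Reidemeister torsion in the short exact sequence attached to the mapping cone of~$f$.
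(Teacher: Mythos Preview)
The paper does not give its own proof of this result; it simply attributes the theorem to Kitano--Suzuki--Wada \cite{KSW05}. Your strategy is in the right spirit, but the block--triangular step is a genuine gap, and your proposed Tietze repair does not close it. Asking for $\partial\tilde r_i/\partial u_k=0$ forces each $\tilde r_i$ to lie in the free subgroup $F(x_1,\dots,x_n)$ while still mapping to $r_i$; since knot groups are Hopfian this is equivalent to demanding that the epimorphism $\varphi$ admit a section $\pi_1(K_2)\hookrightarrow\pi_1(K_1)$. Epimorphisms between knot groups need not split, and no sequence of Tietze moves can manufacture a splitting: the obstruction is group--theoretic, not presentational. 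The dual arrangement (zero block in the lower left, i.e.\ $s_\ell\in\langle\!\langle u_1,\dots,u_m\rangle\!\rangle$) is less restrictive but still requires a delicate relator count that you have not established.

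The argument in \cite{KSW05} sidesteps triangularity altogether. With your generators $x_i,u_k$ and \emph{any} deficiency--one presentation $\langle x,u\mid R_1,\dots,R_{n+m-1}\rangle$ of $\pi_1(K_1)$, the Fox chain rule (which you already invoke) identifies the $x$--block of $(\gamma\circ\varphi\otimes\psi_1)(B_1)$ with the twisted Alexander matrix of the redundant presentation $\langle y_1,\dots,y_n\mid \varphi(R_1),\dots,\varphi(R_{n+m-1})\rangle$ of $\pi_1(K_2)$. Because every $\varphi(R_\ell)$ lies in the normal closure of $r_1,\dots,r_{n-1}$, a short Fox--calculus computation in $\mathbb{Z}[\pi_1(K_2)]$ shows that each block row of this matrix is a left $R[t^{\pm 1}]$--combination of the rows of $(\gamma\otimes\psi_2)(B_2)$; hence every maximal minor of the $x$--block is divisible by $\det((\gamma\otimes\psi_2)(B_2))$. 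Laplace expansion of $\det((\gamma\circ\varphi\otimes\psi_1)(B_1))$ along those columns then yields $\det((\gamma\otimes\psi_2)(B_2))\mid\det((\gamma\circ\varphi\otimes\psi_1)(B_1))$, and since the denominators $\det(\mathrm{id}-t\gamma(y_1))$ agree the ratio of torsions lies in $R[t^{\pm 1}]$. An alternative, more homological route is to observe that the five--term exact sequence for $1\to\ker\varphi\to\pi_1(K_1)\to\pi_1(K_2)\to 1$ gives a surjection of twisted first homology modules (while the $H_0$'s coincide), whence divisibility of orders; combined with Proposition~\ref{prop:deltatau} this also yields the result.
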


This theorem plays a crucial role in determining the partial order on the set of knots with up to eleven crossings.
We refer to \cite{KS05a}, \cite{KS05b}, \cite{KS08} and \cite{HKMS09} for details.

\subsection{Periodic and freely periodic knots}\label{section:periodic}

A knot $K\subset S^3$ is called \emph{periodic of period $q$}, if there exists a smooth transformation of $S^3$ of order $q$ which leaves $K$ invariant and such that the fixed point set is a circle $A$ disjoint from $K$.
Note that $A\subset S^3$ is the trivial knot by the Smith conjecture. We refer to \cite[Section~10.1]{Ka96} for more details on periodic knots.

Now assume that $K\subset S^3$ is a periodic knot of period $q$ with $A$ the fixed point set of $f:S^3\to S^3$.
Note that $S^3/f$ is diffeomorphic to $S^3$. We denote by $\pi$ the projection map $S^3\to S^3/f=S^3$ and we  write $\overline{K}=\pi(K), \overline{A}=\pi(A)$.

The following two theorems of Hillman, Livingston and Naik \cite[Theorems~3~and~4]{HLN06}
generalize  results of Trotter \cite{Tr61} and Murasugi \cite{Mu71}.

\begin{theorem}
Let $K$ be a periodic knot of period $q$. Let $\pi, A, \overline{A}$ and $\overline{K}$ as above.
Let $\overline{\gamma}:\pi_1(S^3\setminus \nu \overline{K})\to \mbox{GL}(n,R)$ a representation with $R=\mathbb{Z}$ or $R=\mathbb{Q}$.  Write $\gamma =\overline{\gamma}\circ \pi_*$.
Then there exists a polynomial $F(t,s)\in R[t^{\pm 1}, s^{\pm 1}]$ such that
\[ \Delta_K^{\gamma}(t) \doteq \Delta_{\overline{K}}^{\overline{\gamma}}(t) \prod_{k=1}^{q-1} F(t,e^{2\pi ik/q}).\]
\end{theorem}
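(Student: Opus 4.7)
The plan is to adapt Murasugi's classical argument to the twisted setting, working with the two-component link $\overline{L} = \overline{K} \cup \overline{A} \subset S^3$ and its exterior $\overline{X} = S^3 \setminus \nu \overline{L}$. The projection $\pi: S^3 \to S^3$ restricts to a regular $q$-fold cyclic unbranched cover $X \to \overline{X}$, where $X = S^3 \setminus \nu(K \cup A)$, classified by the epimorphism $\varphi: \pi_1(\overline{X}) \to \Z/q$ determined by $\mu_{\overline{A}} \mapsto 1$ and $\mu_{\overline{K}} \mapsto \ell := \mathrm{lk}(K,A)$.

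First I would apply twisted Fox calculus (as in Section \ref{section:compdelta}) to a Wirtinger presentation of $\pi_1(\overline{X})$. Using the abelianization $\psi_0: \pi_1(\overline{X}) \to \Z^2$ sending $\mu_{\overline{K}} \mapsto t$ and $\mu_{\overline{A}} \mapsto s$, together with the representation $\overline{\gamma}_0$ obtained by pulling back $\overline{\gamma}$ along the inclusion-induced surjection $\pi_1(\overline{X}) \to \pi_1(S^3 \setminus \nu \overline{K})$, this produces a Fox matrix $B(t,s)$ over $R[t^{\pm 1}, s^{\pm 1}]$ whose relevant minor determinant represents the twisted Reidemeister torsion $\tau(\overline{X}, \overline{\gamma}_0 \otimes \psi_0)$ (Theorem \ref{thm:Tu22boundary}).

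Next I would unroll the cover $X \to \overline{X}$: using Shapiro's lemma (Theorem \ref{thm:shapiro}) together with direct bookkeeping, a Fox matrix $B'(t)$ computing the twisted invariants of $X$ with the pulled-back representation is obtained from $B(t,s)$ by replacing the variable $s$ with the $q \times q$ companion matrix $C$ of $z^q - 1$. Since $C$ is diagonalizable over $\Q(\zeta_q)$ with eigenvalues $\{\zeta_q^k\}_{k=0}^{q-1}$ for $\zeta_q = e^{2\pi i/q}$, a standard block-determinant argument yields
\[ \det B'(t) \doteq \prod_{k=0}^{q-1} \det B(t, \zeta_q^k). \]
Finally I would use Mayer--Vietoris for torsion (in the spirit of Lemma \ref{lem:zerosurgery}) to pass from $X$ back to $S^3 \setminus \nu K$ and from $\overline{X}$ back to $S^3 \setminus \nu \overline{K}$ by filling in tubular neighborhoods of $A$ and $\overline{A}$: specializing $s=1$ corresponds to filling in $\nu \overline{A}$, so $\det B(t,1)$ differs from $\Delta_{\overline{K}}^{\overline{\gamma}}(t)$ only by a controlled meridian-correction factor, and analogously $\det B'(t)$ differs from $\Delta_K^\gamma(t)$ by a closely related factor. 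Taking $F(t,s)$ to be $\det B(t,s)$ (adjusted, if necessary, to absorb these boundary contributions) produces a polynomial in $R[t^{\pm 1}, s^{\pm 1}]$ realizing the formula.

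The main obstacle is the bookkeeping of the correction factors arising from the surgery step and from the inherent indeterminacy in twisted torsion. One must verify that the factor relating $\det B(t,1)$ to $\Delta_{\overline{K}}^{\overline{\gamma}}(t)$ matches the analogous factor hidden in the comparison of $\det B'(t)$ with $\Delta_K^\gamma(t)$, so that no residual terms contaminate the $k \neq 0$ portion of the product. A subsidiary delicate point, when $R = \Z$, is to verify the integrality of $F(t,s)$: a priori the specializations $\det B(t, \zeta_q^k)$ lie only in the larger ring $\Z[\zeta_q][t^{\pm 1}]$, and one must check that pulling back to a polynomial in $s$ with coefficients in $\Z[t^{\pm 1}]$ is indeed possible via the Fox calculus construction.
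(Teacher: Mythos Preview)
The paper does not give its own proof of this theorem: it is a survey, and the result is simply quoted from Hillman--Livingston--Naik \cite{HLN06}, with the remark that in the untwisted case $F(t,s)$ is the Alexander polynomial of the link $\overline{K}\cup\overline{A}$. So there is no proof here to compare against directly; one can only check your outline against the expected argument.

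Your strategy---pass to the two-component link exterior $\overline{X}=S^3\setminus\nu(\overline{K}\cup\overline{A})$, realize $X=S^3\setminus\nu(K\cup A)\to\overline{X}$ as the $\Z/q$-cover, diagonalize the induced representation over $\Q(\zeta_q)$ to split the Fox determinant as $\prod_{k=0}^{q-1}\det B(t,\zeta_q^k)$, and then fill back in $\nu A$ and $\nu\overline{A}$ via Mayer--Vietoris---is exactly the twisted version of Murasugi's argument and is the approach taken in \cite{HLN06}. The paper's remark about $F(t,s)$ confirms that the two-variable link polynomial is the right object, and your observation that $F$ is automatically integral because it arises from Fox calculus over $R[t^{\pm1},s^{\pm1}]$ (before any specialization to roots of unity) is the correct way to handle the $R=\Z$ case.

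One small correction: your description of the classifying map $\varphi:\pi_1(\overline{X})\to\Z/q$ is off. A meridian disk of $\overline{K}$ is disjoint from $\overline{A}$ and therefore lifts to $X$, so $\mu_{\overline{K}}\mapsto 0$, not $\mu_{\overline{K}}\mapsto\ell$. (The linking number $\mathrm{lk}(\overline{K},\overline{A})$ enters instead through the \emph{longitude} of $\overline{A}$ when you do the filling step, which is where the factor $\det(\mathrm{id}-\gamma(A)t^{\mathrm{lk}(K,A)})$ in the companion theorem comes from.) This does not affect the structure of your argument, but it does affect the bookkeeping you flag as the main obstacle, so it is worth getting right before carrying out the computation.
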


In the untwisted case the polynomial $F(t,s)$ is just the Alexander polynomial of the ordered link $\overline{K}\cup \overline{A}\subset S^3$. We refer to \cite[Section~6]{HLN06} for more information on $F(t,s)$.
The following theorem gives an often stronger condition when the period is a prime power.

\begin{theorem}
Let $K$ be a periodic knot of period $q=p^r$ where $p$ is a prime. Let $\pi, A, \overline{A}$ and $\overline{K}$ as above.
Let $\overline{\gamma}:\pi_1(S^3\setminus \nu \overline{K})\to \mbox{GL}(n,\mathbb{Z}_p)$ be a representation. Write $\gamma =\overline{\gamma}\circ \pi_*$. If $\Delta_{K}^\gamma(t)\ne 0$, then
\[ \Delta_{K}^\gamma(t) \cdot \left(\Delta_{K,0}^\gamma(t)\right)^{q-1} \doteq \Delta_{\overline{K}}^{\overline{\gamma}}(t)^q\,
\left(\det\left(\mbox{id}_n-\gamma(A)t^{lk(K,A)}\right)\right)^{q-1}\, \in \mathbb{Z}_p[t^{\pm 1}],  \]
where we view $A$ as an element in $\pi_1(S^3\setminus \nu K)$.
\end{theorem}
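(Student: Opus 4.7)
The plan is to combine Shapiro's lemma (Theorem~\ref{thm:shapiro}) for a naturally associated covering with a Mayer--Vietoris reduction, exploiting the characteristic $p$ structure of $\mathbb{Z}_p[\mathbb{Z}/p^r]$ to convert the covering contribution into a $q$-th power. The $\mathbb{Z}/q$-action on $S^3$ has fixed set $A$, so it restricts to a free action on $S^3\setminus(K\cup A)$ and yields an unbranched $q$-fold cyclic cover $X\setminus \nu A\to Y:=\overline{X}\setminus\nu\overline{A}$, where $X=S^3\setminus\nu K$ and $\overline{X}=S^3\setminus\nu\overline{K}$. Writing $\widetilde{G}=\pi_1(X\setminus\nu A)$ as an index $q$ subgroup of $G=\pi_1(Y)$, the hypothesis $\gamma=\overline{\gamma}\circ\pi_*$ (together with the analogous $\psi=\overline{\psi}\circ\pi_*$ for the abelianizations) makes $(\gamma\otimes\psi)|_{\widetilde{G}}$ the restriction of $(\overline{\gamma}\otimes\overline{\psi})|_{G}$. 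By Frobenius reciprocity, the $G$-module induced from $\widetilde{G}$ is isomorphic to $\overline{V}\otimes_R R[\mathbb{Z}/q]$, where $R=\mathbb{Z}_p[t^{\pm 1}]$ and $\overline{V}$ carries $(\overline{\gamma}\otimes\overline{\psi})|_G$; Shapiro's lemma then gives $\tau(X\setminus\nu A,\gamma\otimes\psi)\doteq\tau(Y,\overline{V}\otimes_R R[\mathbb{Z}/q])$.

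The characteristic-$p$ input enters at this point. Since $q=p^r$, the Frobenius identity yields $R[\mathbb{Z}/q]=R[y]/(y^{p^r}-1)=R[y]/(y-1)^{p^r}$, a local $R$-algebra with maximal ideal $(y-1)$. The $(y-1)$-adic filtration has $q$ associated graded pieces, on each of which the $G$-action is trivial (because $y=1+(y-1)$ acts as the identity modulo one degree up). Tensoring with $\overline{V}$, each graded piece is isomorphic to $\overline{V}$ as a $G$-module. Iterating multiplicativity of Reidemeister torsion under the short exact sequences of the filtration produces
\[ \tau(Y,\overline{V}\otimes_R R[\mathbb{Z}/q])\doteq\tau(Y,\overline{V})^q, \]
and hence $\tau(X\setminus\nu A,\gamma\otimes\psi)\doteq\tau(Y,(\overline{\gamma}\otimes\overline{\psi})|_G)^q$.

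To return to the knot complements I apply the Mayer--Vietoris argument of Lemma~\ref{lem:zerosurgery} to the decompositions $X=(X\setminus\nu A)\cup_{\partial\nu A}\nu A$ and $\overline{X}=Y\cup_{\partial\nu\overline{A}}\nu\overline{A}$: the torus boundaries contribute trivially, and each solid torus contributes $1/\det(\mbox{id}-\alpha(\mbox{core}))$. Setting $D=\det(\mbox{id}_n-\gamma(A)\,t^{lk(K,A)})$ and $\overline{D}=\det(\mbox{id}_n-\overline{\gamma}(\overline{A})\,t^{lk(\overline{K},\overline{A})})$, this produces $\tau(X\setminus\nu A,\gamma\otimes\psi)\doteq D\cdot\tau(X,\gamma\otimes\psi)$ and $\tau(Y,\overline{V})\doteq\overline{D}\cdot\tau(\overline{X},\overline{\gamma}\otimes\overline{\psi})$. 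The geometric identities $\gamma(A)=\overline{\gamma}(\overline{A})$ (because $\pi|_A$ is a homeomorphism) and $lk(K,A)=lk(\overline{K},\overline{A})$ (because the degree $q$ of $\pi|_K:K\to\overline{K}$ is exactly balanced by $\pi_*(\mu_A)=q\mu_{\overline{A}}$ in the computation of linking numbers) together give $D=\overline{D}$. Combining the three ingredients yields $\tau(X)\doteq\tau(\overline{X})^q\cdot D^{q-1}$. Converting torsions to Alexander polynomials by Proposition~\ref{prop:deltataualpha} (using $\Delta_{N,2}\doteq 1$ from Proposition~\ref{prop:deltatau} because both complements have non-empty boundary), together with the equality $\Delta_{K,0}^\gamma\doteq\Delta_{\overline{K},0}^{\overline{\gamma}}$ (immediate since $H_0$ is the module of coinvariants and $\pi_*$ is surjective on fundamental groups), delivers the stated congruence.

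The main obstacle is verifying that the iterated torsion multiplicativity applies without correction terms, which requires every coefficient module $F_j=(y-1)^j R[\mathbb{Z}/q]\otimes\overline{V}$ in the filtration to give an acyclic chain complex over $Q(R)$. I would handle this by propagating acyclicity inductively. The hypothesis $\Delta_K^\gamma\ne 0$, combined with the non-vanishing of $D$ as a polynomial in $t$ (automatic whenever $lk(K,A)\ne 0$), makes $F_0=\overline{V}\otimes R[\mathbb{Z}/q]$ acyclic via Shapiro; the bottom graded piece $F_{q-1}$ is isomorphic to $\overline{V}$ as a $G$-module; and chasing the long exact sequences attached to the filtration forces $H_*(\overline{V})\cong H_{*-q+1}(\overline{V})$, which by dimension reasons for a $3$-manifold implies acyclicity of $\overline{V}$ and in turn of every intermediate $F_j$.
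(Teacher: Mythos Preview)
The paper is a survey and does not supply its own proof of this statement; it simply cites \cite[Theorem~4]{HLN06}, and your strategy---pass to the link complement $S^3\setminus(K\cup A)$, apply Shapiro's lemma to the free $\mathbb{Z}/q$-cover, exploit that $\mathbb{Z}_p[\mathbb{Z}/q]\cong\mathbb{Z}_p[y]/(y-1)^q$, and then fill the solid tori back in via Mayer--Vietoris---is exactly the approach of the original source.  The bookkeeping you do (the projection formula identifying the induced module with $\overline V\otimes R[\mathbb{Z}/q]$, the equality $lk(K,A)=lk(\overline K,\overline A)$, the identification $\Delta_{K,0}^\gamma\doteq\Delta_{\overline K,0}^{\overline\gamma}$ via surjectivity of $\pi_*$) is all correct.

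There is, however, a genuine gap in your final paragraph.  The claimed shift $H_*(\overline V)\cong H_{*-q+1}(\overline V)$ does not follow from ``chasing the long exact sequences''.  From $0\to F_1\to F_0\to\overline V\to 0$ with $F_0$ acyclic you do get $H_i(\overline V)\cong H_{i-1}(F_1)$, but at the next step $F_1$ is not known to be acyclic, so the long exact sequence for $0\to F_2\to F_1\to\overline V\to 0$ does not produce another clean degree shift; the connecting maps need not be isomorphisms.  Iterating therefore does not yield the asserted periodicity.

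The fix is to bypass the homological induction entirely and argue with determinants.  In the basis $\{(y-1)^j\}_{j=0}^{q-1}$ for $R[\mathbb{Z}/q]$, every $g\in G$ acts by a \emph{unipotent} lower-triangular matrix (since $y^{\sigma(g)}=(1+(y-1))^{\sigma(g)}$ has constant term $1$).  Hence for each boundary matrix $A_i\in M(\mathbb{Z}[G])$ the matrix $\rho_{F_0}(A_i)$ is block lower-triangular with every diagonal block equal to $\rho_{\overline V}(A_i)$.  Applying Theorem~\ref{thm:Tu22boundary} to a $2$-complex model of $Y$, the square submatrix $B_2$ inherits the same block-triangular shape, so
\[
\det\big(\rho_{F_0}(B_i)\big)=\det\big(\rho_{\overline V}(B_i)\big)^{\,q}\quad(i=1,2).
\]
This shows simultaneously that $F_0$ is acyclic over $Q(R)$ \emph{if and only if} $\overline V$ is, and that $\tau(Y;F_0)\doteq\tau(Y;\overline V)^q$.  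With this replacement your argument goes through.  (You should also note that the Mayer--Vietoris step tacitly uses $lk(K,A)\neq 0$ to guarantee that $D\neq 0$ and that the torus $\partial\nu A$ has vanishing twisted homology; this hypothesis is standard but worth making explicit.)
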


(Note that Elliot \cite{El08} gave an alternative proof for this theorem.)
These theorems are applied in \cite[Section~10]{HLN06} to give obstructions on the periodicity of the Kinoshita--Terasaka knot and the Conway knot. Note that both knots have trivial Alexander polynomial, in particular Murasugi's obstructions are satisfied trivially.

A knot $K\subset S^3$ is called \emph{freely periodic of period $q$} if there exists a free transformation $f$ of $S^3$ of order $q$ which leaves $K$ invariant. We refer to \cite[Section~10.2]{Ka96} for more information on freely periodic knots. Given  such a freely periodic knot $K$ we denote by $\pi$ the projection map $S^3\to \Sigma\mathrel{\mathop:}=S^3/f$ and we  write $\overline{K}=\pi(K)$.

The following theorem is \cite[Theorem~5]{HLN06}; the untwisted case was first proved by
 Hartley \cite{Hat81}.

\begin{theorem}
Let $K$ be a freely periodic knot of period $q$.
Let $\pi, \S$  and $\overline{K}$ as above.
Let $\overline{\gamma}:\pi_1(\S \setminus \overline{K})\to \mbox{GL}(k,R)$ be a representation with $R$ a Noetherian UFD.
Write $\gamma =\overline{\gamma}\circ \pi_*$.
Then
\[ \Delta_{K}^\gamma(t^q) \doteq \prod_{k=0}^{q-1} \Delta_{\overline{K}}^{\overline{\gamma}}(e^{2\pi ik/q}t).\]
\end{theorem}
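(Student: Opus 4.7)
My plan is to apply Shapiro's lemma (Theorem~\ref{thm:shapiro}) to the $q$--fold regular covering $p: \hat{N} = S^3 \setminus \nu K \to N = \Sigma \setminus \nu \overline{K}$ (the restriction of $\pi: S^3\to\Sigma$), and then to decompose the resulting induced representation via characters of $\mathbb{Z}/q$. Choose $\psi: \pi_1(N)\to \mathbb{Z}$ to be the map to $H_1(N)/\mathrm{torsion}$, normalized so that $\hat{\psi} := \psi\circ p_*$ sends the meridian of $K$ to $q$; this is compatible with the connectedness of the lift $K$, and is the convention under which the twisted invariant on $\pi_1(\hat{N})$ associated to $\gamma\otimes\hat{\psi}$ equals the substituted polynomial $\Delta_K^\gamma(t^q)$.

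First, Shapiro's lemma identifies $\Delta_K^\gamma(t^q) \doteq \Delta_N^{\gamma'\otimes\psi}$, where $\gamma' = \mbox{Ind}_{\pi_1(\hat{N})}^{\pi_1(N)}(\overline{\gamma}|_{\pi_1(\hat{N})})$. By the Frobenius projection formula, $\gamma' \cong \overline{\gamma}\otimes_R R[\mathbb{Z}/q]$ as representations of $\pi_1(N)$, with $\pi_1(N)$ acting on the regular representation $R[\mathbb{Z}/q]$ through the quotient $\rho: \pi_1(N) \to \pi_1(\Sigma) \cong \mathbb{Z}/q$. After extending scalars to $R[\zeta_q]$ with $\zeta_q = e^{2\pi i/q}$, the regular representation splits as $\bigoplus_{k=0}^{q-1} R[\zeta_q]_{\chi_k}$, and the multiplicativity of twisted Alexander polynomials under direct sums yields
\[ \Delta_K^\gamma(t^q) \doteq \prod_{k=0}^{q-1}\Delta^{\overline{\gamma}\otimes(\chi_k\circ\rho)}_{\overline{K}}(t). \]

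The remaining step is to match each factor with $\Delta^{\overline{\gamma}}_{\overline{K}}(\zeta_q^k t)$. In terms of representations, the substitution $t\mapsto \zeta_q^k t$ amounts to tensoring by the character $g \mapsto \zeta_q^{k\psi(g)}$, so the identification reduces to showing that $\rho$ agrees with the reduction of $\psi$ modulo $q$ as characters $\pi_1(N) \to \mathbb{Z}/q$, up to an automorphism of $\mathbb{Z}/q$ that is absorbed by reindexing the product. Both maps factor through $H_1(N)\to H_1(\Sigma) \cong \mathbb{Z}/q$ induced by the inclusion, and the connectedness of $K$ forces $[\overline{K}]$ to generate $\pi_1(\Sigma)$, which in turn ensures that a generator of $H_1(N)/\mathrm{torsion}$ has image a generator of $\mathbb{Z}/q$ under both $\rho$ and $\psi \pmod q$. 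The main technical obstacle is precisely this identification: for a general finite cover $\hat{N}\to N$ there is no reason for $\rho$ to be congruent to $\psi$ modulo $q$, so the freely periodic structure (and the resulting compatibility between the deck-group character and the abelianization) is essential.
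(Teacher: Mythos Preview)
The paper does not supply its own proof of this theorem; it is stated with a citation to \cite{HLN06} (Theorem~5 there) and to Hartley \cite{Hat81} for the untwisted case, and no argument is reproduced in the survey. So there is no in-paper proof to compare against.

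Your outline---Shapiro's lemma (Theorem~\ref{thm:shapiro}) applied to the regular $q$--fold cover $S^3\setminus\nu K\to \Sigma\setminus\nu\overline K$, the projection formula $\mathrm{Ind}_{\pi_1(\hat N)}^{\pi_1(N)}(\overline{\gamma}\circ p_*)\cong \overline{\gamma}\otimes R[\mathbb{Z}/q]$, the splitting of $R[\zeta_q][\mathbb{Z}/q]$ into one--dimensional characters, and multiplicativity of the order under direct sums---is the standard route to product formulas of this type and is essentially the argument carried out in \cite{HLN06}. The point you single out as the crux, namely that the deck--group character $\rho:\pi_1(N)\to\mathbb{Z}/q$ coincides (up to an automorphism of $\mathbb{Z}/q$) with the reduction of $\psi$ modulo $q$, is indeed what makes the freely periodic situation special, and your justification via the connectedness of $K$ (forcing $[\overline K]$ to generate $\pi_1(\Sigma)$) is the right one. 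One small caution: the assertion that the generator $\psi$ of $\mathrm{Hom}(\pi_1(N),\mathbb{Z})$ sends $\overline\mu$ to exactly $\pm q$ depends on $H_1(\Sigma\setminus\nu\overline K)$ being torsion--free; this follows from the five--term exact sequence for the covering together with the fact that $[\overline K]$ generates $H_1(\Sigma)$, but it is worth making explicit rather than folding into the word ``normalized''.
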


We refer to \cite[Section~11]{HLN06} for an application of this theorem to a case which could not be settled with Hartley's theorem.

\subsection{Zeroes of twisted Alexander polynomials and non--abelian representations}

Let $N$ be a 3--manifold with one boundary component and $b_1(N)=1$.
Put differently, let  $N$ be the complement of a knot in a rational homology sphere.
Let $\alpha:\pi_1(N)\to \mathbb{C}^*=\mbox{GL}(1,\mathbb{C})$ be a one--dimensional representation.
Note that $\alpha$ necessarily factors through a representation
$H_1(N;\mathbb{Z})\to \mbox{GL}(1,\mathbb{C})$ which we also denote by $\alpha$.
Heusener and Porti \cite{HP05} ask when the abelian representation
\[ \begin{array}{rcl} \rho_{\alpha}: \pi_1(N) &\to & \mbox{PSL}(2,\mathbb{C}) \\
 g&\mapsto &\pm \begin{pmatrix} \alpha(g)^{1/2} &0 \\ 0&\alpha(g)^{-1/2}\end{pmatrix}\end{array} \]
can be deformed into an irreducible representation.

 Denote by $\phi\in \mbox{Hom}(\pi_1(N),\mathbb{Z})=H^1(N;\mathbb{Z})\cong \mathbb{Z}$ a generator.
Pick $\mu \in H_1(N;\mathbb{Z})$ with $\phi(\mu)=1$.
Denote by $\sigma(\alpha,\mu)$ the representation
\[ \begin{array}{rcl} \pi_1(N)&\to & \mbox{GL}(1,\mathbb{C})\\
g &\mapsto & \alpha(g\mu^{-\phi(g)}).\end{array} \]

Heusener and Porti then give necessary and sufficient conditions for $\rho_\alpha$
to be deformable into an irreducible representation in terms of
the order of vanishing of $\Delta_{N}^{\sigma(\alpha,\mu)\otimes \phi}(t)\in \mathbb{C}[t^{\pm 1}]$ at $\alpha(\mu)\in \mathbb{C}^*$.
We refer to \cite[Theorems~1.2~and~1.3]{HP05} for more precise formulations and more detailed results.

Note that this result is somewhat reminiscent of the earlier results of Burde \cite{Bu67} and de Rham \cite{dRh68}  who showed that zeroes of the (untwisted) Alexander polynomial give rise to metabelian representations of the knot group.
We also refer to \cite{SW09e} for another relationship between zeros of twisted Alexander polynomials and the representation theory of knot groups.

\subsection{Seifert fibered surgeries}\label{section:sfs}

In \cite[Section~3]{Kiy09} Kitayama gives a surgery formula for twisted Reidemeister torsion.
Furthermore in \cite[Lemma~4.3]{Kiy09} a formula for the Reidemeister torsion of a Seifert fibered space is given.
By studying a suitable invariant derived from twisted Reidemeister torsion
an obstruction for a Dehn surgery on a knot to equal a specific Seifert fibered space are given.
These obstructions generalize Kadokami's obstructions given in \cite{Ka06} and \cite{Ka07}.
Finally Kitayama applies these methods to show  that
there exists no Dehn surgery on the Kinoshita--Terasaka knot which is homeomorphic
to any Seifert fibered space of the form $ M(p_1/q_1,p_2/q_2,p_3/q_3)$ (we refer to \cite[Section~4]{Kiy09} for the notation).

\subsection{Homology of cyclic covers}\label{section:cycliccovers}

Let $K\subset S^3$ be a knot. We denote by $H\mathrel{\mathop:}=H_1(S^3\setminus \nu K;\mathbb{Z}[t^{\pm 1}])$ its Alexander module. Given $n\in \mathbb{N}$ we denote by $L_n$ the  $n$--fold cyclic branched cover of $K$.
Recall that we have a canonical isomorphism $H/(t^n-1)\cong H_1(L_n)$.
Put differently, the Alexander module determines the homology of the branched covers. In the same vein,  the following formula due to Fox (\cite{Fo56} and see also \cite{Go78})
shows  that the Alexander polynomial determines the size of the homology of a branched cover:
\begin{equation}
| H_1(L_n )|=\left| \prod_{j=1}^{n-1} \Delta_K(e^{2\pi ij/n})\right|.
\end{equation}
Here we write $|H_1(L_n)|=0$ if $H_1(L_n)$ has positive rank. Gordon \cite{Go72} used this formula to study extensively the homology of the branched covers of a knot.
Gordon \cite[p.~366]{Go72} asked whether the non--zero values of the sequence $|H_1(L_n)|$ converge to infinity
if there exists a zero of the Alexander polynomial which is not a root of unity.

Given a multivariable polynomial $p\mathrel{\mathop:}=p(t_1,\dots,t_n)\in \mathbb{C}[t_1^{\pm 1},\dots,t_n^{\pm 1}]$ the Mahler measure is defined as
\[ m(p)=\exp \, \int_{\theta_1=0}^{1} \dots  \int_{\theta_n=0}^{1}\mbox{log} \left|p\big(e^{2\pi i \theta_1},\dots,e^{2\pi i \theta_1}\big) \right| \,d \theta_1 \dots d \theta_n.\]
Note that the integral  can be singular, but one can show that the integral always
converges. It is known (cf. e.g. \cite{SW02}) that an integral one variable polynomial $p$ always satisfies $m(p)\geq 1$, and it satisfies
$m(p)=1$ if and only if all zeroes of $p$ are roots of unity.

\begin{theorem}
Let $K$ be any knot, then
\[ \lim_{n\to \infty} \frac{1}{n}\mbox{log} |\mbox{Tor} \,H_1(L_n)|  = \log(m(\Delta_K(t))).\]
\end{theorem}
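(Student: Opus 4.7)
The plan is to combine a torsion-refined version of Fox's formula with the classical convergence of Riemann sums for Mahler's measure.

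First I would establish the identity
\[ \log |\mbox{Tor}\, H_1(L_n)| = \sum_{\substack{1\le j\le n-1\\ \Delta_K(\zeta_n^j)\ne 0}} \log|\Delta_K(\zeta_n^j)| + O(\log n), \]
where $\zeta_n = e^{2\pi i/n}$. The starting point is the identification $H_1(L_n;\mathbb{Z}) \cong H/(t^n-1)H$, where $H = H_1(S^3\setminus \nu K;\mathbb{Z}[t^{\pm 1}])$ is the Alexander module. Decomposing $H\otimes\mathbb{Q}$ as a direct sum of cyclic $\mathbb{Q}[t^{\pm 1}]$--modules $\mathbb{Q}[t^{\pm 1}]/(p_i)$ with $\prod_i p_i \doteq \Delta_K$, one sees that the free rank of $H_1(L_n)$ equals $\sum_i \deg\gcd(p_i, t^n-1)$, while the torsion subgroup has order, up to a bounded integral correction, equal to $\prod_i |\mathrm{Res}(p_i/\gcd(p_i,t^n-1),\, t^n-1)|$. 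Rewriting this product via $\mathrm{Res}(p,t^n-1) = \prod_{\zeta^n=1} p(\zeta)$ and taking logarithms yields the displayed formula.

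Second I would invoke the classical fact that for any non-zero $p \in \mathbb{C}[t^{\pm 1}]$,
\[ \lim_{n\to\infty}\frac{1}{n}\sum_{\substack{\zeta^n=1\\ p(\zeta)\ne 0}} \log|p(\zeta)| = \int_0^1 \log|p(e^{2\pi i\theta})|\,d\theta = \log m(p). \]
The integral is finite by Jensen's formula. The convergence itself is proved by splitting the unit circle into small arcs around each root of $p$ and their complement: on the complement $\log|p|$ is continuous and classical Riemann sum convergence applies, while near each root $\alpha$ one reduces to the linear factor $t-\alpha$ and uses the explicit computation $\frac{1}{n}\log|\alpha^n - 1| \to \log^+|\alpha|$, with vanishing terms omitted. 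Dividing the first-step formula by $n$ and passing to the limit, the $O(\log n)/n$ error vanishes and the remaining sum converges to $\log m(\Delta_K)$.

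The main obstacle is the torsion-refined Fox formula. Resultants naturally compute the orders of $\mathbb{Q}[t]/(p_i, t^n-1)$, whereas we need the orders of the torsion subgroups of $\mathbb{Z}[t]/(p_i, t^n-1)$; the discrepancy is governed by primes dividing the discriminants and leading coefficients of the $p_i$, and by the way primary decomposition of $H$ interacts with reduction modulo $t^n-1$ when $\Delta_K$ has roots on the unit circle (in particular roots of unity, which would otherwise make the analogous untwisted Fox product vanish). Showing that this integrality bookkeeping contributes only $O(\log n)$, rather than something growing linearly in $n$, is the technical heart of the argument.
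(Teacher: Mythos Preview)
The paper is a survey and does not give its own proof; it attributes the result to Gonz\'alez-Acu\~na--Short \cite{GS91} in most cases and to Silver--Williams \cite{SW02} in full generality. Your outline is essentially the elementary strategy behind \cite{GS91}, and you have correctly flagged the torsion-refined Fox formula as a genuine technical point.

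There is, however, a second and deeper obstacle that you have understated. In step two you assert that for a root $\alpha$ of $\Delta_K$ one ``uses the explicit computation $\frac{1}{n}\log|\alpha^n-1|\to\log^+|\alpha|$''. This is indeed elementary when $|\alpha|\ne 1$ or when $\alpha$ is a root of unity, but when $|\alpha|=1$ and $\alpha$ is \emph{not} a root of unity the statement is equivalent to showing that $|\alpha^n-1|$ does not decay exponentially in $n$. For algebraic $\alpha$ this requires Gelfond's theorem (a special case of Baker's theorem on linear forms in logarithms) and is far from explicit. This is precisely the case that the elementary approach of \cite{GS91} does not cover. Silver and Williams \cite{SW02} close the gap by a different route: they interpret $H_1(L_n)$ in terms of periodic points of the natural $\mathbb{Z}$-action on the Pontryagin dual of the Alexander module and invoke the Lind--Schmidt--Ward theorem relating periodic-point growth to topological entropy, which here equals $\log m(\Delta_K)$. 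That theorem itself ultimately rests on the same Gelfond--Baker input, so either way the transcendence theory is unavoidable; your plan is workable in outline, but the phrase ``explicit computation'' hides the heart of the argument.
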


This theorem was proved for most cases by Gonz\'alez-Acu\~na and Short \cite{GS91}, the most general statement was proved by Silver and Williams \cite[Theorem~2.1]{SW02}. We also refer to \cite{Ri90} for a related result.
Note that by the above discussion this theorem in particular implies the affirmative answer to Gordon's question.

Silver and Williams also generalized this theorem to links (\cite[Theorem~2.1]{SW02}), relating the Mahler measure multivariable Alexander polynomial to the
homology growth of finite abelian covers of the link.
Finally in \cite[Section~3]{SW09c} these results are extended to the twisted case for certain representations
(e.g. integral representations). We refer to \cite[Section~3]{SW09c} for the precise formulations and to \cite[Section~5]{SW09c} for an interesting example.

\subsection{Alexander polynomials for links in $\mathbb{R}  P^3$}\label{section:rp}

Given a link $L\subset \mathbb{R}  P^3$ Huynh and Le \cite[Section~5.3.2]{HL08} use Reidemeister torsion corresponding to abelian representations
to define an invariant $\nabla_L(t)$ which lies in general in $\mathbb{Z}[t^{\pm 1}, (t-t^{-1})^{-1}]$ and has no indeterminacy.
Furthermore they show in \cite[Theorem~5.7]{HL08} the surprising fact that this invariant satisfies in fact a skein relation.

\section{Twisted Alexander polynomials of CW--complexes and groups}\label{section:general}

\subsection{Definitions and basic properties} \label{section:generalprop}

Let $X$ be a  CW--complex  with finitely many cells in each dimension.
Assume we are given a  non--trivial homomorphism $\psi:\pi_1(X)\to F$ to a torsion--free abelian group and a  representation
$\gamma:\pi_1(X)\to \mbox{GL}(k,R)$ where $R$ is a Noetherian UFD.
As in Section \ref{section:twialex} we can define the twisted Alexander modules
\[ H_i(X;R[F]^k)=H_i(C_*(\tilde{X};\mathbb{Z})\otimes_{\mathbb{Z}[\pi_1(X)]} R[F]^k)\]
where $\pi_1(X)$ acts on $C_*(\tilde{X};\mathbb{Z})$ by deck transformations and on $R[F]^k$ by $\gamma\otimes \psi$.
These modules are finitely presented and we can therefore define the twisted Alexander polynomial $\Delta^{\gamma\otimes \psi}_{X,i}\in R[F]$
to be the order of the $R[F]$--module $H_i(X;R[F]^k)$.
Note that twisted Alexander polynomials are homotopy invariants, in  particular given any manifold homotopy equivalent to a finite CW--complex we can define the twisted Alexander polynomials $\Delta^{\gamma\otimes \psi}_{X,i}\in R[F]$.

Let $G$ be a finitely presented group and $X=K(G,1)$ its  Eilenberg--Maclane space.
Given  a  non--trivial homomorphism $\psi:G\to F$ to a torsion--free abelian group and a  representation
$\gamma:G\to \mbox{GL}(k,R)$ where $R$ is a Noetherian UFD we define
\[ \Delta^{\gamma\otimes \psi}_{G,i}=\Delta^{\gamma\otimes \psi}_{K(G,1),i}.\]

\begin{remark}
\begin{enumerate}
\item For $i=0,1$ the Alexander polynomials $\Delta^{\gamma\otimes \psi}_{X,i}\in R[F]$ can be computed using Fox calculus
as in Section \ref{section:compdelta}.
\item Note that unless the Euler characteristic of $X$ vanishes we can not define the Reidemeister torsion corresponding to $(X,\psi,\gamma)$.
\item Most of the results of Section \ref{section:basics} do not hold in the general context.
The only results which do generalize are Proposition \ref{prop:taudelta} (2), Lemma \ref{lem:equ:rep} and Theorem \ref{thm:shapiro}.
\item
Note that given a finitely presented group $G$ and $\psi$, $\gamma$ as above Wada \cite{Wa94}
introduced an invariant which we refer to as $W(G,\gamma\otimes \psi)\in Q(R[F])$.
Using \cite[Lemma~4.11]{Tu01} one can  show that
\[ W(G,\gamma \otimes \psi) \doteq \frac{\Delta^{\gamma\otimes \psi}_{G,1} }{\Delta^{\gamma\otimes \psi}_{G,0}}.\]
In the literature Wada's invariant is often referred to as the  twisted Alexander polynomial of a group.
\end{enumerate}
\end{remark}

\subsection{Twisted Alexander polynomials of  groups}

The twisted Alexander polynomial  has been calculated  by Morifuji \cite[Theorem~1.1]{Mo01} for the braid groups $B_n$ with   $\psi:B_n\to \mathbb{Z}$ the abelianization map
and together with the Burau representation. Morifuji \cite[Theorem~1.2]{Mo01} also proves a symmetry theorem for twisted Alexander polynomials
of braid groups for  Jones representations corresponding to dual Young diagrams.

In \cite{Suz04} Suzuki shows that the twisted Alexander polynomial of the braid group $B_4$ corresponding to the abelianization $\phi:B_4\to \mathbb{Z}$ and the Lawrence--Krammer representation is trivial.
This shows in particular that the twisted Alexander polynomial of a group corresponding to a faithful representation can be trivial.
It is an interesting question whether given a knot and a faithful representation the twisted Alexander polynomial can ever be trivial.

Given a 2--complex $X$ Turaev \cite{Tu02c} introduced a norm on $H^1(X;\mathbb{R} )$ to which we refer to as the Turaev norm.
The definition of the Turaev norm is inspired
by the definition of the Thurston norm \cite{Th86}.
Turaev \cite{Tu02c} uses twisted Alexander polynomials of $X$ corresponding to one--dimensional representations
to define a twisted Alexander norm similar to the one defined in Section \ref{section:genus}. Turaev goes on to show that the twisted Alexander norm gives
a lower bound on the Turaev norm. We refer to \cite[Section~7.1]{Tu02b} for more information.

\subsection{Plane algebraic curves}\label{section:cf}

Let $\mathcal{C}\subset \mathbb{C}^2$  be an affine algebraic curve. Denote by $P_1,\dots,P_k$ the set of singularities and denote
by $L_1,\dots,L_k$ the links at the singularities and let $L_\infty$ be the link at infinity
(we refer to \cite{CF07} for details).
Note that $\mathbb{C}^2\setminus \nu \mathcal{C}$ is homotopy equivalent to a finite CW--complex.
By Section \ref{section:generalprop}  we can therefore consider the twisted Alexander polynomial of $\mathbb{C}^2\setminus \nu \mathcal{C}$.
Now let  $\gamma:\pi_1(\mathbb{C}^2\setminus \nu \mathcal{C})\to \mbox{GL}(k,\mathbb{F})$ be a representation where $\mathbb{F}\subset \mathbb{C}$ is a subring
closed under conjugation. Let $\phi: \mathbb{C}^2\setminus \nu \mathcal{C}\to \mathbb{Z}$ be the map given by sending each oriented meridian to one.
Cogolludo and Florens \cite[Theorem~1.1]{CF07} then  relate  twisted Alexander polynomial of
$\mathbb{C}^2\setminus \nu \mathcal{C}$ corresponding to $\gamma\otimes \phi$ to the one--variable twisted Alexander polynomials of the links $L_1,\dots,L_k$ and $L_\infty$.
This result generalizes a result of  Libgober's regarding untwisted Alexander polynomials of affine algebraic curves (cf.  \cite[Theorem~1]{Lib82}). We refer to \cite[Section~6]{CF07} for applications of this result. Finally we  refer to \cite{CS08} for a further application of twisted Alexander polynomials to algebraic geometry.

%
%
%
%
%

\section{Alexander polynomials and representations over non--commutative rings}\label{section:ho}

In the previous sections we only considered finite dimensional representations over commutative rings.
One possible approach to studying invariants corresponding to infinite dimensional representations
is to use the theory of $L^2$--invariants. We refer to \cite{Lu02} for the definition of various $L^2$--invariants  and for some applications to low--dimensional topology.
Even though $L^2$--invariants are a powerful tool they have not yet been systematically studied for links and 3--manifolds.
We refer to the work of Li and Zhang \cite{LZ06a,LZ06b} for some initial work.
We also would like to use this opportunity to advertise a problem stated in \cite[Section~3.2~Remark~(3)]{FLM09}.

For the remainder of this section we will now be concerned with invariants corresponding to finite dimensional representations
over non--commutative rings.
The study of such invariants (often referred to as higher order Alexander polynomials) was initiated by Cochran  \cite{Co04},
building on ideas of Cochran, Orr and Teichner \cite{COT03}. The notion of higher order Alexander polynomials
was extended to 3--manifolds by Harvey \cite{Ha05} and Turaev \cite{Tu02b}.
This theory is different in spirit to the fore mentioned $L^2$--invariants, but we refer to \cite{Ha08} and \cite[Proposition~2.4]{FLM09} for some connections.

\subsection{Non--commutative Alexander polynomials}\label{sec:phicompatible}

Let $\mathbb{K}$ be a (skew) field and $\gamma:\mathbb{K}\to \mathbb{K}$ a ring homomorphism. Denote by $\mathbb{K}[t^{\pm 1}]$ the corresponding skew Laurent
polynomial ring over $\mathbb{K}$. The elements in $\mathbb{K}[t^{\pm 1}]$ are formal sums
$\sum_{i=-r}^s a_it^i$ with $a_i\in \mathbb{K}$ and multiplication in $\mathbb{K}[t^{\pm 1}]$ is given by the rule
$t^ia=\gamma^i(a)t^i$ for any $a\in \mathbb{K}$.

Let $N$ be a 3--manifold and let $\phi \in H^1(N;\mathbb{Z})$ be non--trivial.
Following Turaev \cite{Tu02b} we call a ring homomorphism $\varphi:\mathbb{Z}[\pi_1(N)] \to \mathbb{K}[t^{\pm 1}]$ \emph{$\phi$--compatible}
if for any $g\in \pi_1(N)$ we have $\varphi(g)=kt^{\phi(g)}$ for some $k\in \mathbb{K}$.
Given a $\phi$--compatible  homomorphism $\varphi:\mathbb{Z}[\pi_1(N)]\to \mathbb{K}[t^{\pm 1}]$ we  consider
the  $\mathbb{K}[t^{\pm 1}]$--module
\[ H_i(N;\mathbb{K}[t^{\pm 1}])= H_i(C_*(\tilde{N})\otimes_{\mathbb{Z}[\pi_1(N)]} \mathbb{K}[t^{\pm 1}])\]
 where $\tilde{N}$ is the universal cover of $N$.
Since $\mathbb{K}[t^{\pm 1}]$ is a principal ideal domain
(PID) (cf. \cite[Proposition~4.5]{Co04}) we can decompose
\[ H_i(N;\mathbb{K}[t^{\pm 1}])\cong \bigoplus_{k=1}^l
\mathbb{K}[t^{\pm 1}]/(p_k(t))\] for $p_k(t)\in \mathbb{K}[t^{\pm 1}]$, $1 \le k \le l$. We define
$\Delta_{N,\phi,i}^\varphi\mathrel{\mathop:}=\prod_{k=1}^lp_k(t) \in \mathbb{K}[t^{\pm 1}]$.
As for twisted Alexander polynomials we write $\Delta_{N,\phi}^\varphi=\Delta_{N,\phi,1}^\varphi$.
Non--commutative Alexander polynomials have in general a high indeterminacy,
we refer to \cite[p.~367]{Co04} and
\cite[Theorem~3.1]{Fr07} for a discussion of the indeterminacy.
Note though that the degree of a non--commutative Alexander polynomial is well--defined.


The following theorem was proved for knots by Cochran \cite{Co04} and extended to 3--manifolds by
Harvey \cite{Ha05} and Turaev \cite{Tu02b}.

\begin{theorem}\label{thm:lowerboundho}
Let $N$ be a 3--manifold with empty or toroidal boundary and let $\phi\in H^1(N;\mathbb{Z})$ non--trivial.
Let $\varphi:\mathbb{Z}[\pi_1(N)]\to \mathbb{K}[t^{\pm 1}]$ be a $\phi$--compatible  homomorphism.
\begin{enumerate}
\item If the image of $\pi_1(N)\to \mathbb{K}[t^{\pm 1}]$ is non--cyclic, then
\[ \deg(\Delta_{N,\phi,0}^\varphi)=0.\]
\item If the image of $\pi_1(N)\to \mathbb{K}[t^{\pm 1}]$ is non--cyclic and if $\Delta_{N,\phi}^\varphi\ne 0$, then
\[ \deg(\Delta_{N,\phi,2}^\varphi)=0.\]
\item If $\Delta_{N,\phi}^\varphi\ne 0$, then we have the following inequality
\[ ||\phi||_T \geq \deg(\Delta_{N,\phi}^\varphi)- \deg(\Delta_{N,\phi,0}^\varphi)-\deg(\Delta_{N,\phi,2}^\varphi)\]
and equality holds if $\phi$ is a fibered class and $N\ne S^1\times D^2, N\ne S^1\times S^2$.
\end{enumerate}
\end{theorem}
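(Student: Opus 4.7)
For (1), the observation is that the image of $\pi_1(N)$ in $\mathbb{K}[t^{\pm 1}]^*$ under $\varphi$ is non-cyclic if and only if there exists $g \in \ker(\phi)$ with $\varphi(g) \ne 1$. Indeed, any cyclic subgroup of $\mathbb{K}[t^{\pm 1}]^*$ surjecting onto $\phi(\pi_1(N)) \subseteq \mathbb{Z}$ is generated by some $kt^n$ with $n \ne 0$, and such a subgroup meets $\mathbb{K}^*$ only at the identity. For such a $g$, the element $\varphi(g) - 1$ is a nonzero element of $\mathbb{K}$, hence a unit in $\mathbb{K}[t^{\pm 1}]$. Since this unit lies in the left ideal defining $H_0(N;\mathbb{K}[t^{\pm 1}])$, one obtains $H_0 = 0$ and thus $\deg(\Delta^{\varphi}_{N,\phi,0}) = 0$.

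For (2), my plan is to invoke Poincar\'e--Lefschetz duality together with the universal coefficient theorem for the skew PID $\mathbb{K}[t^{\pm 1}]$. In the closed case, duality yields $H_2(N;\mathbb{K}[t^{\pm 1}]) \cong H^1(N;\mathbb{K}[t^{\pm 1}])$, which in turn fits into a short exact sequence involving $\Hom(H_1, \mathbb{K}[t^{\pm 1}])$ and $\Ext^1(H_0, \mathbb{K}[t^{\pm 1}])$. The Hom term vanishes because $\Delta_1 \ne 0$ forces $H_1$ to be torsion, and the Ext term vanishes by (1). Hence $H_2 = 0$ and $\deg \Delta_2 = 0$. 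For $N$ with toroidal boundary, I would argue analogously via Poincar\'e--Lefschetz duality for the pair $(N,\partial N)$, observing that on each torus component of $\partial N$ the non-cyclic hypothesis (combined with the control on $\varphi|_{\partial N}$) suffices to kill the relevant boundary contributions.

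For (3), the core of the argument, the strategy is a Mayer--Vietoris decomposition along a Thurston-norm minimizing surface. Let $S \subset N$ be a properly embedded incompressible surface dual to $\phi$ with $\chi_-(S) = ||\phi||_T$, and write $N = Y \cup (S \times [0,1])$, where $Y$ is obtained by cutting $N$ open along $S$ and the two copies of $S$ in $\partial Y$ are glued to $S \times \{0,1\}$. Since $\phi$ vanishes on $\pi_1(S)$ and $\pi_1(Y)$, the coefficient modules $H_i(S;\mathbb{K}[t^{\pm 1}])$ and $H_i(Y;\mathbb{K}[t^{\pm 1}])$ are free $\mathbb{K}[t^{\pm 1}]$-modules, isomorphic to $H_i(\,\cdot\,;\mathbb{K}_\varphi) \otimes_{\mathbb{K}} \mathbb{K}[t^{\pm 1}]$. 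The Mayer--Vietoris connecting map takes the form $(j_+ - tj_-, j_+ - j_-)$, a matrix of $t$-degree at most one; counting ranks and orders of torsion in the resulting long exact sequence, combined with the Euler characteristic identity $\chi(N)=0$, produces the inequality $\deg \Delta_1 - \deg \Delta_0 - \deg \Delta_2 \le -\chi(S) = ||\phi||_T$. In the fibered case one may choose $Y = S \times [0,1]$, collapsing Mayer--Vietoris to a Wang sequence with connecting map $\id - t f_*$, where $f$ is the monodromy; the order of this map has $t$-degree exactly equal to the $\mathbb{K}$-rank of $H_*(S;\mathbb{K}_\varphi)$, yielding equality.

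The main obstacle is tracking the degree computations in the non-commutative Mayer--Vietoris sequence. ``Determinants'' over the skew ring $\mathbb{K}[t^{\pm 1}]$ require the Dieudonn\'e determinant, and the translation between ranks over $\mathbb{K}[t^{\pm 1}]$ and degrees of orders of torsion modules must be done via the Ore localization to the skew field $\mathbb{K}(t)$. Additionally, the incompressibility of $S$ (which follows from Thurston-norm minimality by Gabai's results) is used to ensure that $\pi_1(S)$ and $\pi_1(Y)$ inject into $\pi_1(N)$, so that the coefficient restrictions behave as claimed.
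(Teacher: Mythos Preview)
The paper is a survey and does not itself prove this theorem; it attributes the result to Cochran \cite{Co04} for knots and to Harvey \cite{Ha05} and Turaev \cite{Tu02b} for general 3--manifolds. Your outline for (1) and (3) matches the approach in those references: for (3), Harvey and Turaev cut along a Thurston--norm minimizing surface dual to $\phi$ and exploit that the resulting chain complex over $\mathbb{K}[t^{\pm 1}]$ has boundary maps affine in $t$, yielding the degree bound by a rank/order count over the skew PID; your Wang--sequence argument for the fibered equality is the standard one (compare the proof sketch of Theorem~\ref{thm:fibob} in the commutative setting).

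One point on (2): your treatment of the toroidal--boundary case is vague, and the appeal to ``control on $\varphi|_{\partial N}$'' is not the usual route. A cleaner argument, avoiding the boundary altogether, uses that a compact 3--manifold with nonempty boundary is homotopy equivalent to a 2--complex. Then $H_2(N;\mathbb{K}[t^{\pm 1}])$ is a submodule of a free module over the PID $\mathbb{K}[t^{\pm 1}]$, hence free; since $\chi(N)=0$ and $H_0,H_1$ are $\mathbb{K}[t^{\pm 1}]$--torsion (the latter by the hypothesis $\Delta_{N,\phi}^\varphi\ne 0$), an Euler--characteristic count over the Ore field $\mathbb{K}(t)$ forces $H_2$ to have rank zero, whence $H_2=0$. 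Note this does not even require the non--cyclic hypothesis, which is only genuinely needed in the closed case, where your Poincar\'e duality plus universal coefficients argument is the correct one.
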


We refer to \cite{Fr07} for the definition of a twisted non--commutative Alexander polynomial
and to a corresponding generalization of Theorem \ref{thm:lowerboundho}, we also refer to \cite{Fr07} for
a reinterpretation of the third statement of
Theorem \ref{thm:lowerboundho} in terms of a certain non--commutative Reidemeister torsion.

\subsection{Higher order Alexander polynomials}

We now recall the construction of what are arguably the most interesting examples of $\phi$--compatible homomorphisms from $\pi_1(N)$ to a non--commutative Laurent polynomial ring.
The ideas of this section are due to Cochran and Harvey.

\begin{theorem}\label{thm:tfa}
Let $\gamma$ be a torsion--free solvable group and let $\mathbb{F}$ be a commutative field. Then the
following hold.
\begin{enumerate}
\item  $\mathbb{F}[\Gamma]$  is an Ore domain, in particular it embeds in its classical right ring of
quotients $\mathbb{K}(\Gamma)$.
\item $\mathbb{K}(\Gamma)$ is flat over $\mathbb{F}[\Gamma]$.
\end{enumerate}
\end{theorem}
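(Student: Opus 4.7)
I would prove both statements by induction on the derived length of $\Gamma$, treating (1) as the main assertion and extracting (2) from standard Ore localization theory. For the base case of (1), take $\Gamma$ torsion-free abelian. Then $\Gamma$ embeds into the $\mathbb{Q}$-vector space $\Gamma \otimes \mathbb{Q}$, so $\mathbb{F}[\Gamma]$ embeds in $\mathbb{F}[\Gamma \otimes \mathbb{Q}]$, which is a directed union of Laurent polynomial rings $\mathbb{F}[t_1^{\pm 1},\dots,t_n^{\pm 1}]$. Each of these is a commutative integral domain, hence so is the union, and hence $\mathbb{F}[\Gamma]$ is a commutative domain, which is trivially Ore, and $\mathbb{K}(\Gamma)$ is simply its field of fractions.

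For the inductive step, let $\Gamma$ be torsion-free solvable of derived length $d\ge 2$. The key structural input is that such a $\Gamma$ admits a normal subgroup $N$ with $N$ torsion-free solvable of derived length at most $d-1$ and with $\Gamma/N$ torsion-free abelian (for the poly-torsion-free-abelian groups that arise in the applications this is built into the hypothesis; in full generality one must pass to the appropriate term of the derived series after clearing torsion in the quotients). By induction $\mathbb{F}[N]$ is an Ore domain embedding in a skew field $\mathbb{K}(N)$. Conjugation by $\Gamma$ on $N$ extends to a $\Gamma/N$-action on $\mathbb{K}(N)$, and $\mathbb{F}[\Gamma]$ embeds naturally into the crossed product $\mathbb{K}(N) \ast (\Gamma/N)$. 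Since $\Gamma/N$ is torsion-free abelian, it is bi-orderable, which lets us realize $\mathbb{K}(N)\ast(\Gamma/N)$ as an iterated skew Laurent polynomial ring over the division ring $\mathbb{K}(N)$; by a standard argument (leading-term comparison in the bi-ordering) this ring has no zero divisors, and therefore neither does $\mathbb{F}[\Gamma]$.

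For the Ore condition itself, I would invoke the classical principle (going back to Tamari, sharpened by many authors) that any group ring $\mathbb{F}[\Gamma]$ that is a domain and whose group is amenable automatically satisfies the (left and right) Ore condition. Solvable groups are amenable, so this applies to $\Gamma$, and Ore's theorem then provides the classical ring of quotients $\mathbb{K}(\Gamma)$, which is a skew field since $\mathbb{F}[\Gamma]$ is a domain. Statement (2) is now a formal consequence: the functor of inverting a right denominator set is exact, so $\mathbb{K}(\Gamma)$ is flat (both as a left and as a right module) over $\mathbb{F}[\Gamma]$.

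The main obstacle is verifying the domain property in the non-abelian inductive step, i.e.\ understanding the crossed product $\mathbb{K}(N)\ast(\Gamma/N)$ well enough to exclude zero divisors; this is where the bi-orderability of torsion-free abelian quotients and the induction hypothesis for $\mathbb{K}(N)$ are combined. A secondary subtle point is producing a normal subgroup $N$ with the stated properties, which is automatic for the poly-torsion-free-abelian setting but genuinely requires the torsion-free hypothesis on $\Gamma$ to propagate to the quotient $\Gamma/N$.
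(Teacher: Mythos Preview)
Your treatment of the Ore condition (domain plus amenable implies Ore) and of flatness is correct and matches the paper's approach exactly: the paper cites \cite[Corollary~6.3]{DLMSY03} for the Ore condition, which is precisely the amenability principle you invoke, and it likewise records flatness as a standard feature of Ore localization.

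The divergence, and the gap, is in the domain property. The paper simply cites \cite{KLM88} (Kropholler--Linnell--Moody), which establishes the zero-divisor conjecture for torsion-free elementary amenable groups by $K$-theoretic methods. Your crossed-product induction is the classical argument for PTFA groups, and you correctly note that it goes through smoothly there. But your parenthetical fix for the general torsion-free solvable case does not work: if you take $N=[\Gamma,\Gamma]$ then $N$ has smaller derived length but $\Gamma/N$ need not be torsion-free, so the bi-orderability step fails; if instead you enlarge $N$ to kill torsion in the quotient, there is no reason $N$ retains smaller derived length (indeed $N$ could be all of $\Gamma$), so the induction stalls. Torsion-free solvable groups are genuinely more general than PTFA groups, and the passage from one to the other is exactly what \cite{KLM88} supplies. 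So as written your argument proves the theorem only under the stronger PTFA hypothesis; for the statement as given you need to invoke \cite{KLM88} for the domain property, as the paper does.
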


Indeed, it follows from \cite{KLM88}  that $\mathbb{F}[\Gamma]$ has no zero divisors. The first part now follows from
\cite[Corollary~6.3]{DLMSY03}. The second part is a well--known property of Ore
localizations. We call $ \mathbb{K}(\Gamma)$ the \emph{Ore localization} of
$\mathbb{F}[\Gamma]$. In \cite{COT03} the notion of a poly--torsion--free--abelian (PTFA) group is introduced,
it is well--known that these groups are torsion--free and solvable.

\begin{remark}
\begin{enumerate}
\item
It follows from Higman's theorem \cite{Hi40} that the above theorem also holds for groups which are locally indicable and amenable.
We will not make use of this, but note that throughout this section `torsion--free solvable' could be replaced by `locally indicable and amenable'.
\item  Note that the poly--torsion--free--abelian (PTFA) groups introduced in \cite{COT03} are solvable and torsion--free.
\end{enumerate}
\end{remark}

We need the following definition.

\begin{definition} Let $\pi$ be a group, $\phi:\pi \to \mathbb{Z}$ an epimorphism and
$\varphi:\pi\to \gamma$ an epimorphism to a torsion--free solvable group $\gamma$ such that there
exists a map $\phi_\Gamma:\Gamma\to \mathbb{Z}$ (which is necessarily unique) such that
 \[   \xymatrix {
 \pi \ar[dr]_{\phi} \ar[r]^{\varphi} &\Gamma \ar[d]^{\phi_\Gamma}\\& \mathbb{Z} }
     \]
 commutes. Following \cite[Definition~1.4]{Ha06} we call $(\varphi,\phi)$
an {\em admissible pair}.
\end{definition}

Now let $(\varphi:\pi_1(N)\to \Gamma,\phi)$ be an admissible pair for $\pi_1(N)$. In the following we
denote $\ker\{\phi:\Gamma\to \mathbb{Z}\}$ by $\Gamma'(\phi)$. When the homomorphism $\phi$ is understood we will
write $\Gamma'$ for $\Gamma'(\phi)$. Clearly $\Gamma'$ is still solvable and torsion--free. Let
$\mathbb{F}$ be any commutative field and $\mathbb{K}(\Gamma')$ the Ore localization of $\mathbb{F}[\Gamma']$. Pick an element $\mu
\in \gamma$ such that $\phi(\mu)=1$. Let $\gamma:\mathbb{K}(\Gamma')\to \mathbb{K}(\Gamma')$ be the homomorphism given by
$\gamma(a)=\mu a\mu^{-1}$. Then we get a ring homomorphism
\[ \begin{array}{rcl} \mathbb{Z}[\Gamma]&\to& \mathbb{K}(\Gamma')_\gamma[t^{\pm 1}]\\[1mm]
    g &\mapsto& (g\mu^{-\phi(g)}t^{\phi(g)}), \mbox{ for $g\in \gamma$.}\end{array} \]
    We denote this ring homomorphism again by $\varphi$.
    It is clear that $\varphi$ is $\phi$--compatible. Note that the ring $\mathbb{K}(\Gamma')[t^{\pm 1}]$ and hence the
above representation depends on the choice of $\mu$. We will nonetheless suppress $\mu$ in the
notation since different choices of splittings give isomorphic rings.
We will refer to a non--commutative Alexander polynomial corresponding to such a group homomorphism
as a \emph{higher order Alexander polynomial}.

An important example of admissible pairs is provided by Harvey's rational derived series of a group
$\gamma$ (cf. \cite[Section~3]{Ha05}). Let $\gamma_r^{(0)}=\gamma$ and define inductively
\[ \gamma_r^{(n)}=\big\{ g\in \gamma_r^{(n-1)} | \, g^k \in \big[\gamma_r^{(n-1)},\gamma_r^{(n-1)}\big] \mbox{ for some }k\in \mathbb{Z} \setminus \{0\} \big\}.\]
Note that $\gamma_r^{(n-1)}/\gamma_r^{(n)}\cong
\big(\gamma_r^{(n-1)}/\big[\gamma_r^{(n-1)},\gamma_r^{(n-1)}\big]\big)/\mbox{$\mathbb{Z}$--torsion}$.
 By \cite[Corollary~3.6]{Ha05} the quotients $\Gamma/\gamma_r^{(n)}$ are solvable and torsion--free
for any $\gamma$ and any $n$. If $\phi:\Gamma\to \mathbb{Z}$ is an epimorphism, then  $(\Gamma\to \Gamma/\gamma_r^{(n)},\phi)$ is
an admissible pair for $(\Gamma,\phi)$ for any $n>0$.

For example if $K$ is a knot, $\gamma=\pi_1(S^3\setminus \nu K)$, then it follows from \cite{St74} that
$\Gamma^{(n)}_r=\Gamma^{(n)}$, i.e. the rational derived series equals the ordinary derived series (cf. also
\cite{Co04} and \cite{Ha05}).

\begin{remark}
The Achilles heel of the higher order Alexander polynomials is that they are unfortunately difficult to compute
in practice. We refer to \cite{Sa07} for some ideas on how to compute higher order Alexander polynomials in some cases.
\end{remark}


\subsection{Comparing different $\phi$--compatible maps}\label{section:comparison}

\noindent We now recall a definition from \cite{Ha06}.

\begin{definition}
Let $N$ be a 3--manifold with empty or toroidal boundary.
We write $\pi=\pi_1(N)$. Let $\phi:\pi\to \mathbb{Z}$ an epimorphism. Furthermore let $\varphi_1:\pi \to
\gamma_1$ and $\varphi_2:\pi \to \gamma_2$ be epimorphisms to torsion--free solvable groups $\gamma_1$
and $\gamma_2$. We call $(\varphi_1,\varphi_2,\phi)$ an {\em admissible triple} for $\pi$ if there
exist epimorphisms $\varphi_{2}^1:\gamma_1\to \gamma_2$  and $\phi_2:\gamma_2\to
\mathbb{Z}$ such that $\varphi_2=\varphi_{2}^1\circ \varphi_1$, and $\phi=\phi_2\circ \varphi_2$.
\end{definition}

\noindent The situation can be summarized in the following diagram
 \[   \xymatrix { &\gamma_1\ar[d]^{\varphi_2^1}\\ \pi \ar[dr]_{\phi} \ar[ur]^{\varphi_1} \ar[r]^{\varphi_2}
&\gamma_2 \ar[d]^{\phi_2}\\& \mathbb{Z}. }
     \]
Note that in particular $(\varphi_i,\phi), i=1,2$ are admissible pairs for $\pi$.
The following theorem is perhaps the most striking feature of higher order Alexander polynomials.
In light of Theorem \ref{thm:lowerboundho} the statement can be summarized as saying that higher order Alexander polynomials corresponding to larger groups
give better bounds on the Thurston norm.

 \begin{theorem}
Let $N$ be a 3--manifold whose boundary is a (possibly empty) collection of tori. Let
$(\varphi_1,\varphi_2,\phi)$ be an admissible triple for $\pi_1(N)$.
Suppose that $\Delta_{N,\phi}^{\varphi_2}\ne 0$, then it follows that $\Delta_{N,\phi}^{\varphi_1}\ne 0$.
We write
\[ d_i\mathrel{\mathop:}=\deg(\Delta_{N,\phi}^{\varphi_i})- \deg(\Delta_{N,\phi,0}^{\varphi_i})-\deg(\Delta_{N,\phi,2}^{\varphi_i}), i=1,2 \]
Then the following holds:
\[ d_1 \geq d_2.\]
Furthermore, if the ordinary Alexander polynomial $\Delta_N^\phi\in \mathbb{Z}[t^{\pm 1}]$ is non--trivial, then
 $d_1-d_2$ is an even integer.
\end{theorem}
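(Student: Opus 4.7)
The plan is to deduce all three claims from Harvey's dimension function for torsion-free solvable groups, combined with Poincar\'e duality. First, since $(\varphi_1,\varphi_2,\phi)$ is admissible, the surjection $\varphi_2^1 : \Gamma_1 \twoheadrightarrow \Gamma_2$ restricts to a surjection of kernels $\Gamma_1' \twoheadrightarrow \Gamma_2'$, where $\Gamma_i' := \ker(\phi_{\Gamma_i})$. For each $i$ I would interpret $\deg(\Delta_{N,\phi,j}^{\varphi_i})$ as the $\mathbb{K}(\Gamma_i')$-dimension of the $\mathbb{K}(\Gamma_i')[t^{\pm 1}]$-torsion part of $H_j(N; \mathbb{K}(\Gamma_i')[t^{\pm 1}])$: since $\mathbb{K}(\Gamma_i')[t^{\pm 1}]$ is a non-commutative PID, any torsion module over it is a finite-dimensional $\mathbb{K}(\Gamma_i')$-vector space whose dimension matches the degree of its order.

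The key technical input would then be a rank comparison: for any finitely generated right $\mathbb{Z}[\Gamma_1][t^{\pm 1}]$-module $M$,
\[ \mathrm{rk}_{\mathbb{K}(\Gamma_1')[t^{\pm 1}]}\bigl(M \otimes \mathbb{K}(\Gamma_1')[t^{\pm 1}]\bigr) \;\geq\; \mathrm{rk}_{\mathbb{K}(\Gamma_2')[t^{\pm 1}]}\bigl(M \otimes \mathbb{K}(\Gamma_2')[t^{\pm 1}]\bigr). \]
Applied to the homology of $C_*(\tilde{N}) \otimes_{\mathbb{Z}[\pi_1(N)]} \mathbb{K}(\Gamma_i')[t^{\pm 1}]$, this immediately yields the non-vanishing implication (rank zero on the $\Gamma_2$ side forces rank zero on the $\Gamma_1$ side). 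For the inequality $d_1 \geq d_2$, I would combine this with $\chi(N) = 0$: the signed sum $\sum_j (-1)^{j+1} \dim_{\mathbb{K}(\Gamma_i')} H_j(N;\mathbb{K}(\Gamma_i')[t^{\pm 1}])_{\mathrm{tors}}$ recovers $d_i$, and the rank comparison together with the vanishing Euler characteristic forces $d_1 \geq d_2$.

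For the parity claim I would argue as follows. If the ordinary Alexander polynomial $\Delta_N^\phi$ is non-zero, then the abelianization $\pi_1(N) \to H_1(N;\mathbb{Z})/\mathrm{torsion}$ factors through each $\Gamma_i$, and a descent via the rank comparison forces all $\Delta_{N,\phi,j}^{\varphi_i}$ to be non-zero, so each $H_1(N;\mathbb{K}(\Gamma_i')[t^{\pm 1}])$ is genuinely torsion. Poincar\'e duality on $N$ then endows this torsion module with a non-singular Hermitian Blanchfield-type pairing, in analogy with Proposition \ref{prop:dualitydelta}. As in the classical case this pairing forces $d_i \equiv \deg(\Delta_N^\phi) \pmod{2}$, so $d_1 - d_2$ is even.

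The hard part will be the rank comparison itself. There is in general no ring homomorphism $\mathbb{K}(\Gamma_1') \to \mathbb{K}(\Gamma_2')$: elements of $\mathbb{F}[\Gamma_1']$ inverted to form $\mathbb{K}(\Gamma_1')$ may lie in the kernel of $\mathbb{F}[\Gamma_1'] \twoheadrightarrow \mathbb{F}[\Gamma_2']$, so no direct base-change is available. Circumventing this requires Harvey's bimodule dimension function for torsion-free solvable groups, and this is the technical heart of \cite{Ha06}; once available, the non-vanishing, the inequality, and the parity all follow by standard Euler-characteristic and duality arguments.
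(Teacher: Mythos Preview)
Your outline is in the right spirit---the paper itself gives no argument here, only citations to Cochran \cite{Co04}, Harvey \cite{Ha06}, Friedl \cite{Fr07} and Friedl--Kim \cite{FK08b}---and you have correctly isolated the main obstacle, namely that there is no ring map $\mathbb{K}(\Gamma_1')\to\mathbb{K}(\Gamma_2')$, so one must go through Harvey's dimension/rank comparison for torsion-free solvable groups. Two points need repair, however.

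First, the direction of your rank inequality is reversed. For a finitely generated $\mathbb{Z}[\pi_1(N)]$-module $M$ one has
\[
\mathrm{rk}_{\mathbb{K}(\Gamma_1')[t^{\pm1}]}\bigl(M\otimes \mathbb{K}(\Gamma_1')[t^{\pm1}]\bigr)\;\le\;\mathrm{rk}_{\mathbb{K}(\Gamma_2')[t^{\pm1}]}\bigl(M\otimes \mathbb{K}(\Gamma_2')[t^{\pm1}]\bigr),
\]
not $\ge$: Harvey's lemma says matrix ranks over $\mathbb{K}(\Gamma_1)$ dominate those over $\mathbb{K}(\Gamma_2)$, so cokernel ranks go the other way. (Your application ``rank zero on the $\Gamma_2$ side forces rank zero on the $\Gamma_1$ side'' is consistent with $\le$, so this may be a slip.) A quick test: with $\Gamma_1=\mathbb{Z}^2$, $\Gamma_2=\mathbb{Z}$ (diagonal), and $M=\mathbb{Z}[\Gamma_1]/(a-b)$, the module has rank $0$ over $\mathbb{K}(\Gamma_1')[t^{\pm1}]$ but rank $1$ over $\mathbb{Q}[t^{\pm1}]$.

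Second, and more seriously, the step ``rank comparison together with $\chi(N)=0$ forces $d_1\ge d_2$'' does not go through as stated. The Euler characteristic computed over $\mathbb{K}(\Gamma_i')[t^{\pm1}]$ sees only the free ranks of the $H_j$, all of which are zero under your hypotheses, so it gives no information about the torsion dimensions $\deg(\Delta_{N,\phi,j}^{\varphi_i})$. The actual arguments proceed differently: Harvey works with an explicit square presentation matrix for (a variant of) $H_1$, expresses the degree as a matrix-rank defect over $\mathbb{K}(\Gamma_i')$, and then applies the monotonicity of matrix ranks; Friedl \cite{Fr07} reinterprets $d_i$ as the degree of a non-commutative Reidemeister torsion $\tau(N,\varphi_i)\in\mathbb{K}(\Gamma_i')(t)$, computed via Turaev's algorithm from the matrices $B_j$, and shows monotonicity of $\deg(\det_{\mathrm{Dieudonn\acute e}} B_j)$ term by term. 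Either route uses $\chi(N)=0$ only to guarantee that the relevant square matrices exist, not as an additive identity among torsion dimensions. Your sketch should be rewritten along one of these lines.

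Your parity outline is plausible---the result in \cite{FK08b} does ultimately rest on a duality/Blanchfield-type argument---but ``forces $d_i\equiv\deg(\Delta_N^\phi)\pmod 2$'' again needs the explicit structure of the torsion module and its pairing, not merely the existence of some Hermitian form.
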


\begin{proof}
The fact that $\Delta_{N,\phi}^{\varphi_2}\ne 0$ implies that $\Delta_{N,\phi}^{\varphi_1}\ne 0$
and the  inequality $d_2\geq d_1$  were first proved for knots by Cochran \cite{Co04}.
 Cochran's result were  then extended to the case of 3--manifolds by Harvey \cite{Ha06} (cf. also \cite{Fr07}).
Finally the fact that $d_2-d_1$ is an even integer when $\Delta_N^\phi\ne 0$ is proved in \cite{FK08a}.
\end{proof}

The strong relationship between the Thurston norm and higher order Alexander polynomials
is also confirmed by the following result (cf. \cite{FH07}).

\begin{theorem}
Let $N$ be a 3--manifold with empty or toroidal boundary, let $\varphi:\pi_1(N) \to \gamma$ be an epimorphism to a torsion--free solvable group such that the abelianization $\pi_1(N)\to F\mathrel{\mathop:}=H_1(N;\mathbb{Z})/\mbox{torsion}$ factors through $\varphi$.
Then  the map
\[ \begin{array}{rcl} H^1(N;\mathbb{Z})=\mbox{Hom}(F,\mathbb{Z})&\to & \mathbb{Z}_{\geq 0} \\
\phi &\mapsto &\max\{0,\deg(\Delta_{N,\phi}^{\varphi})- \deg(\Delta_{N,\phi,0}^{\varphi})-\deg(\Delta_{N,\phi,2}^{\varphi})\}\end{array} \]
defines a seminorm on $H^1(N;\mathbb{Z})$ which gives a lower bound on the Thurston norm.
\end{theorem}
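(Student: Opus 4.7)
The Thurston norm bound is immediate from Theorem~\ref{thm:lowerboundho}(3): when $\Delta^\varphi_{N,\phi}\neq 0$ that theorem directly yields $||\phi||_T\geq\delta_\varphi(\phi)$, where $\delta_\varphi(\phi)$ denotes the expression appearing inside the $\max$; when $\Delta^\varphi_{N,\phi}=0$ the $\max\{0,\cdot\}$ returns $0\leq||\phi||_T$ trivially. So the real substance of the statement is that the function is a seminorm on $H^1(N;\mathbb{Z})$, i.e.~that it vanishes on $0$, is integer-homogeneous, and satisfies the triangle inequality.

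The first step in my plan is to reinterpret $\delta_\varphi(\phi)$ as a dimension invariant. The skew Laurent ring $\mathbb{K}(\Gamma'(\phi))[t^{\pm 1}]$ is a (non-commutative) PID by Theorem~\ref{thm:tfa} applied to the torsion-free solvable group $\Gamma'(\phi)$, so finitely generated modules decompose as a free part plus cyclic torsion summands. Over such a ring the degree of the order of a torsion module equals its dimension over $\mathbb{K}(\Gamma'(\phi))$. Combining this with the vanishing of $\chi(N)$ (and Poincar\'e duality in the closed case), $\delta_\varphi(\phi)$ can be rewritten purely in terms of $\mathbb{K}(\Gamma'(\phi))$-dimensions of twisted homology, which is a more flexible invariant than the product of polynomial orders in the original definition.

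Homogeneity $\delta_\varphi(n\phi)=|n|\,\delta_\varphi(\phi)$ for nonzero $n\in\mathbb{Z}$ is then a bookkeeping exercise. For non-primitive $\phi$ we first set the value via the primitive factor and check consistency. For $n>0$ with $\phi$ primitive the subgroup $\Gamma'(\phi)=\Gamma'(n\phi)$ is unchanged, and the substitution $t\mapsto t^n$ at the level of skew Laurent polynomials shows that all relevant degrees scale by the factor $n$. The case $n<0$ reduces to the symmetry $\delta_\varphi(-\phi)=\delta_\varphi(\phi)$, which comes from the involution $t\mapsto t^{-1}$ on the coefficient ring together with the corresponding duality for orders of modules over the skew PID.

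The triangle inequality is the heart of the argument, and the main obstacle I expect. My plan is to work with the joint map $\Phi=(\phi_1,\phi_2)\colon\Gamma\to\mathbb{Z}^2$, build an auxiliary two-variable skew Laurent setup over $\mathbb{K}(\ker\Phi)$, and recover $\delta_\varphi(\phi_1)$, $\delta_\varphi(\phi_2)$, and $\delta_\varphi(\phi_1+\phi_2)$ as the three natural specializations of a single multivariable invariant to one-variable rings. The inequality would then follow from a Newton-polytope-type subadditivity for the support of orders of modules over this two-variable Ore domain, applied in the three directions $(1,0)$, $(0,1)$, $(1,1)$. The delicate points, and where the commutative McMullen-style argument for the classical Alexander norm does not transpose verbatim, are controlling the indeterminacy of the multivariable polynomial, ensuring that the specialization maps preserve non-vanishing, and verifying that the dimension computations survive the passage from two variables to one.
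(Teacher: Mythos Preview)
The paper does not actually prove this theorem; it simply cites \cite{FH07}. So the comparison is with the approach of that reference, whose title already signals the method: \emph{non--commutative multivariable Reidemeister torsion}.

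Your Thurston norm bound and your reinterpretation of $\delta_\varphi(\phi)$ as a $\mathbb{K}(\Gamma'(\phi))$--dimension are both correct and standard. Where your proposal diverges from \cite{FH07} is in the organisation of the seminorm argument. You try to verify homogeneity and subadditivity axiom by axiom on the one--variable invariants, invoking a two--variable construction only as an ad hoc device for the triangle inequality. The cleaner route, and the one taken in \cite{FH07}, is to build a \emph{single} multivariable object from the outset: let $\Gamma_F=\ker(\Gamma\to F)$, form the twisted group ring $\mathbb{K}(\Gamma_F)[F]$ (a non--commutative analogue of a multivariable Laurent ring), and define a Reidemeister torsion $\tau$ of $N$ with these coefficients. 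The seminorm then arises directly as the support function of the Newton polytope of $\tau$, exactly as McMullen's Alexander norm does in the commutative case. Homogeneity and subadditivity are automatic for any polytope seminorm; the substantive work is showing that specialising along each $\phi\colon F\to\mathbb{Z}$ recovers $\delta_\varphi(\phi)$, which is a change--of--variables computation in the spirit of Proposition~\ref{prop:changevartau}.

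Your plan is not wrong, but as you yourself flag, the ``delicate points'' (controlling indeterminacy in the two--variable Ore domain, non--vanishing under specialisation, comparing dimensions across different base skew fields $\mathbb{K}(\Gamma'(\phi_1))$, $\mathbb{K}(\Gamma'(\phi_2))$, $\mathbb{K}(\Gamma'(\phi_1+\phi_2))$) are genuine, and your outline does not resolve them. Doing so essentially forces you to build the full multivariable torsion anyway, at which point the polytope argument is both shorter and more transparent. In particular, your homogeneity step already runs into the fact that the construction in Section~\ref{sec:phicompatible} assumes $\phi$ is an epimorphism, so for non--primitive $\phi$ one must either extend the definition or pass through the multivariable picture; the latter handles this uniformly.
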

\subsection{Miscellaneous applications of higher order Alexander polynomials}

In this section we quickly summarize various applications of higher order Alexander polynomials and related invariants to various aspects of low--dimensional topology:
\begin{enumerate}
\item Leidy \cite{Lei06} studied the relationship between higher order Alexander modules and non--commutative Blanchfield pairings.
\item Leidy and Maxim (\cite{LM06} and \cite{LM08}) studied  higher order Alexander polynomials of plane curve complements.
\item Cochran and Taehee Kim \cite{CT08} showed that given a knot with genus greater than one, the higher order
Alexander polynomials do not determine the concordance class of a knot.
\item Sakasai \cite{Sa06,Sa08} and Goda--Sakasai \cite{GS08} studied applications of higher order Alexander invariants to homology cylinders and sutured manifolds. For example higher order Alexander invariants can be used to give obstructions to homology cylinders being products.
\end{enumerate}

\section{Open questions and problems}\label{section:question}

We conclude this survey paper with a list of open questions and problems.

\begin{enumerate}
\item Using elementary ideals one can define the twisted $k$--th Alexander polynomial, generalizing the $k$--th Alexander polynomial of a knot $K\subset S^3$.
What information do these invariants contain?
\item  Let $K\subset S^3$ a knot and let $\gamma:\pi_1(S^3\setminus \nu K)\to \mbox{SL}(k,R)$ be a representation, where $R$ is a Noetherian UFD with possibly trivial involution.
Does it follow that $\Delta_K^\gamma$ is reciprocal, i.e. does it hold that $\Delta_K^\gamma \doteq \overline{\Delta_K^\gamma}$?
Note that this holds for unitary representations (cf. Section \ref{section:duality}, \cite{Ki96}, \cite{KL99a}) and for all calculations known to the authors. \\
Added in proof: This question was answered in the negative by Hillman, Silver and Williams \cite{HSW09}, cf. also the remark after Proposition \ref{prop:dualitytau}.
\item Can any two knots or links be distinguished using twisted Alexander polynomials?
\item If $(N,\phi)$ is non--fibered, does there exist a representation $\gamma:\pi_1(N)\to \mbox{GL}(k,R)$ such that
$\Delta_{N}^{\gamma \otimes \phi}$ is not monic?
\item If $(N,\phi)$ is non--fibered, does there exist a representation $\gamma:\pi_1(N)\to \mbox{SL}(2,\mathbb{C})$ such that
$\tau(N,{\gamma\otimes \phi})$ is not monic? (cf. e.g. \cite[Problem~1.1]{GM03}).
\item If $(N,\phi)$ is non--fibered, does there exist a representation $\gamma:\pi_1(N)\to \mbox{GL}(k,R)$ such that
$\Delta_{N}^{\gamma \otimes \phi}$ is zero?
\item Let $K\subset S^3$ be any knot, does the twisted Reidemeister torsion of \cite{GKM05} corresponding to a generic faithful representation detect fiberedness? (cf. \cite[p.~452]{Mo08} for some calculations).
\item Let $N$ be a 3--manifold with empty or toroidal boundary, $N\ne S^1\times D^2, S^1\times S^2$, let $\phi \in H^1(N;\mathbb{Z})$ and let $\gamma:\pi_1(N)\to \mbox{GL}(k,R)$ be a  representation such that $\Delta_{N}^{{\gamma\otimes \phi}}\ne 0$. Does it follow that
\[\deg(\tau(N,{\gamma\otimes \phi}))= \deg(\Delta_{N,1}^{\gamma\otimes \phi})-\deg(\Delta_{N,0}^{\gamma\otimes \phi})-\deg(\Delta_{N,2}^{\gamma\otimes \phi})  \]
has the parity of $k||\phi||_T$? Note that this holds for fibered $(N,\phi)$ and for the untwisted Alexander polynomials of a knot.\\
Added in proof: this also holds for hyperbolic knots and the canonical $SL(2,\C)$ representation. 
\item Does the twisted Alexander polynomial detect the Thurston norm of a given $\phi \in H^1(N;\mathbb{Z})$?
\item Let $K$ be a hyperbolic knot and $\rho:\pi_1(S^3\setminus \nu K)\to \mbox{SL}(2,\mathbb{C})$ the unique discrete faithful representation.
\begin{enumerate}
\item  Is $\Delta_K^\rho$ non--trivial?
\item Does $\deg(\Delta_K^\rho)$ determine the genus of $K$?
\item Is $K$ fibered if $\tau(K,\rho)$ is monic?
\end{enumerate}
Note that the unique discrete representation is over a number field which for many knots can be obtained explicitly with Snappea.
These questions can therefore be answered for small crossing knots.\\
Added in proof: the answer to all three questions is yes, if $K$ has at most 13 crossings (\cite{DFJ10}).
\item Does there exist a knot $K\subset S^3$ and a nonabelian representation  $\gamma$ such that $\Delta_K^\gamma$ is trivial?
\item Are there knots for which Kitayama's lower bounds on the free genus of a knot (cf. \cite{Kiy08a})
are larger than the bound on the ordinary genus obtained in \cite{FK06}?
\item Find a practical algorithm for computing higher order Alexander polynomials.
\item Do higher order Alexander polynomials detect mutation?
\item Does there exist a twisted version of Turaev's torsion function?
\item Use twisted Alexander polynomials to determine which knots with up to twelve crossings are doubly slice.
\item Can the results of \cite{HK79} and \cite{Hat80} regarding Alexander polynomials of amphichiral knots be generalized to twisted Alexander polynomials?
\end{enumerate}

\end{document}